\newtheorem{thm}{Theorem}[section]
\newtheorem{lem}[thm]{Lemma}
\newtheorem{prop}[thm]{Proposition}
\newtheorem{cor}[thm]{Corollary}
\newtheorem{dfn}[thm]{Definition}
\theoremstyle{remark}
\newtheorem*{rmk}{Remark}
\newcommand{\bs}[1]{\boldsymbol{#1}}
\renewcommand{\bf}[1]{\mathbf{#1}}
\renewcommand{\rm}[1]{\mathrm{#1}}
\newcommand{\cal}[1]{\mathcal{#1}}
\newcommand{\bbN}{\mathbb{N}}
\newcommand{\bbZ}{\mathbb{Z}}
\newcommand{\bfX}{\mathbf{X}}
\newcommand{\bfZ}{\mathbf{Z}}
\newcommand{\sfE}{\mathsf{E}}
\renewcommand{\d}{\mathrm{d}}
\newcommand{\G}{\Gamma}
\renewcommand{\S}{\Sigma}
\renewcommand{\a}{\alpha}
\newcommand{\eps}{\varepsilon}
\renewcommand{\l}{\Lambda}
\renewcommand{\o}{\omega}
\newcommand{\s}{\sigma}
\newcommand{\nil}{\mathrm{nil}}
\newcommand{\fin}{\nolinebreak\hspace{\stretch{1}}$\lhd$}
\newcommand{\actson}{\curvearrowright}
\begin{document}

\title[Ergodic-theoretic density-increment]{Ergodic-theoretic implementations of the Roth density-increment argument}
\author{Tim Austin}

\address{Courant Institute\\ New York University\\ New York, NY 10012, USA}

\email{tim@cims.nyu.edu}

\urladdr{http://www.cims.nyu.edu/~tim}

\thanks{Work supported by fellowships from Microsoft Corporation and from the Clay Mathematics Institute}



\date{}

\begin{abstract}
We exhibit proofs of Furstenberg's Multiple Recurrence Theorem and of a special case of Furstenberg and Katznelson's multidimensional version of this theorem, using an analog of the density-increment argument of Roth and Gowers.  The second of these results requires also an analog of some recent finitary work by Shkredov.

Many proofs of these multiple recurrence theorems are already known.  However, the approach of this paper sheds some further light on the well-known heuristic correspondence between the ergodic-theoretic and combinatorial aspects of multiple recurrence and Szemer\'edi's Theorem.  Focusing on the density-increment strategy highlights several close points of connection between these settings.
\end{abstract}

\maketitle

\tableofcontents

\section{Introduction}

In 1975 Szemer\'edi published the first proof of a long-standing conjecture of Erd\H{o}s and Tur\'an concerning arithmetic progressions in dense arithmetic sets.

\begin{thm}[Szemer\'edi's Theorem]\label{thm:Szem}
If $E \subset \bbZ$ admits some $\delta > 0$ for which there are arbitrarily long intervals $[M,N]$ with
\[|E\cap [M,N]| \geq \delta (N-M)\]
(that is, $E$ has `upper Banach density' equal to at least $\delta$), then $E$ also contains for every $k\geq 1$ a nondegenerate arithmetic progression of length $k$:
\[E \supset \{a,a+n,a+2n,\ldots,a + (k-1)n\}\quad\quad\hbox{for some }a\in\bbZ,n\geq 1.\]
\end{thm}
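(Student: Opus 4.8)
The plan is to follow Furstenberg's ergodic-theoretic route rather than Szemer\'edi's original combinatorial argument. The first step is the \textbf{Furstenberg correspondence principle}: given $E \subset \bbZ$ of upper Banach density at least $\delta$, one constructs a probability-preserving system $(X,\mathcal{B},\mu,T)$ together with a set $A \in \mathcal{B}$, $\mu(A) \geq \delta$, having the property that whenever
\[ \mu\big(A \cap T^{-n}A \cap T^{-2n}A \cap \cdots \cap T^{-(k-1)n}A\big) > 0 \]
for some $n \geq 1$, the set $E$ must contain a nondegenerate $k$-term progression with common difference $n$. Concretely one takes $X = \{0,1\}^{\bbZ}$ with the shift $T$, puts $A = \{x \in X : x_0 = 1\}$, and lets $\mu$ be a weak-$*$ subsequential limit of the empirical measures $\frac{1}{N_i - M_i}\sum_{j=M_i}^{N_i-1}\delta_{T^j \mathbf{1}_E}$ taken along intervals $[M_i,N_i)$ that realize the density $\delta$; since $A$ and the intersections above are clopen, $\mu$ assigns each of them the limiting frequency of the corresponding pattern in $E$, and positivity of such a frequency forces the pattern to occur. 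So it suffices to prove a multiple recurrence statement for an arbitrary system.

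That statement is \textbf{Furstenberg's Multiple Recurrence Theorem}: for every probability-preserving system $(X,\mathcal{B},\mu,T)$, every $A$ with $\mu(A) > 0$, and every $k \geq 1$, there is some $n \geq 1$ with $\mu(A \cap T^{-n}A \cap \cdots \cap T^{-(k-1)n}A) > 0$; one in fact aims at the sharper conclusion that
\[ \liminf_{N \to \infty}\ \frac{1}{N}\sum_{n=1}^{N}\mu\big(A \cap T^{-n}A \cap \cdots \cap T^{-(k-1)n}A\big) > 0, \]
which is what a density-increment argument naturally yields. The body of the paper establishes this by an ergodic analog of the Roth--Gowers argument: if the averages above are abnormally small for $A$, then $A$ correlates with a suitably structured factor of the system --- the Kronecker factor when $k = 3$, a higher-order (nilsystem-like) factor in general --- and by conditioning on and localizing over that factor one produces, in an extension of the system, a set whose density has strictly increased while the same deficiency in multiple recurrence persists. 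The iteration is forced to terminate because density is bounded above by $1$, and the resulting contradiction pins the recurrence averages below.

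The main obstacle is precisely this density-increment step carried out in the ergodic category. In the cyclic-group setting of Roth and Gowers one simply restricts to a subprogression or Bohr set on which the relative density is larger; the analog here is less transparent, since one must make sense of ``passing to a denser sub-object'' for an abstract measure-preserving system. The right move is to pass to an extension of $(X,\mathcal{B},\mu,T)$ carrying a distinguished factor --- the characteristic factor witnessing the failure of recurrence --- and then, on a positive-measure fibre over that factor, to locate a set whose conditional density over the factor has increased by a definite amount; checking that the deficiency in the multiple-recurrence averages is inherited at each stage, and that the incremented quantity is genuinely bounded, is the crux. For $k = 3$ the Kronecker factor suffices and the argument is comparatively clean, whereas the general $k$ --- and, a fortiori, the special case of the Furstenberg--Katznelson multidimensional theorem treated later in the paper --- requires in addition an ergodic counterpart of Shkredov's recent finitary work together with a correspondingly more elaborate tower of structured factors.
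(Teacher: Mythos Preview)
Your overall plan---correspondence principle, then a density-increment proof of multiple recurrence driven by characteristic factors---is the paper's strategy. But two points in your sketch diverge from what the paper actually does and should be corrected.

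First, the increment is not obtained by passing to an extension of $(X,\mu,T)$ or to ``a positive-measure fibre over the factor''. The paper instead finds, via the Host--Kra $(k-2)$-step nilrotation factor $\pi$, a set $B$ in $X$ that is approximately invariant under some power $T^r$ and on all of whose $T^r$-shifts the set $A$ has uniformly increased conditional density (Proposition~\ref{prop:erg-dens-inc}); it then builds a genuinely \emph{new} process on $\{0,1\}^{\bbZ}$ as a vague limit of the laws of $n\mapsto 1_A(T^{rn}x)$ with $x$ drawn from $\mu(\,\cdot\mid B)$ (Corollary~\ref{cor:erg-dens-inc}). The enabling ingredient your sketch omits entirely is multiple recurrence \emph{for nilrotations themselves} (Theorem~\ref{thm:nilsys-MRT}): this is what allows one to tile a positive-measure subset of $G/\Gamma$ by long orbit-segments (Proposition~\ref{prop:rec-in-nilsys}) and thereby produce the approximately $T^r$-invariant $B$. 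The paper is explicit that its proof rests on exactly two inputs---Corollary~\ref{cor:corn-on-nilsys} (from Host--Kra) and nilrotation recurrence---and you have named only the first.

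Second, your final sentence misplaces Shkredov. The general-$k$ one-dimensional argument in Section~\ref{sec:1D} uses only the Host--Kra nilsystem theory and no Shkredov-type ideas at all; the Shkredov augmentation (carrying auxiliary partially-invariant sets $E_1$, $E_2$ alongside $A$) appears only in Section~\ref{sec:2D} for the two-dimensional corners problem, and is orthogonal to the passage from $k=3$ to larger $k$.
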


Separate proofs for various special cases were given earlier by Roth and by Szemer\'edi himself.  The thirty-five years subsequent to Szemer\'edi's breakthrough have seen the emergence of a host of alternative approaches to this theorem and several generalizations.

The many techniques that have been brought to bear in this investigation are loosely drawn from three areas of mathematics:
\begin{itemize}
\item graph and hypergraph theory (in work of Szemer\'edi, Solymosi, Nagle, R\"odl, Schacht, Skokan, Gowers and others),
\item ergodic theory (largely building on ideas of Furstenberg and Katznelson),
\item harmonic analysis (following Roth, Bourgain, Gowers, Green, Tao and Shkredov).
\end{itemize}
The alternative arguments constructed from these three bodies of theory sometimes correspond much more closely than is initially apparent, owing to many differences in technical detail that turn out to be quite superficial. No really comprehensive overview of the relations among these approaches is yet available, but fragments of the picture can be found in the papers~\cite{Kra07(surv),GreTao--nildist,Tao06} and in Chapters 10 and 11 of Tao and Vu's book~\cite{TaoVu06}.

The purpose of the present note is to extract one aspect of the harmonic analytic approach --- the `density-increment argument', originating in the early work of Roth~\cite{Rot53} --- and present a natural analog of it in the rather different setting of ergodic theory.  No new theorems will be proved except for some technical results needed on route, but I hope that this alternative presentation of existing ideas will contribute to enhancing the toolkits of those working on this class of problems, and also shed some light on the open questions that remain concerning the density-increment approach.

\subsection{Ergodic Ramsey Theory}

Two years after Szemer\'edi's proof of Theorem~\ref{thm:Szem} appeared, Furstenberg offered in~\cite{Fur77} a very different approach to the same result based on a conversion to a problem in ergodic theory, using what is now referred to as `Furstenberg's correspondence principle'.

A precise formulation of the general correspondence principle can be found, for example, in Bergelson~\cite{Ber96}.  Here we simply recall that Furstenberg proved the equivalence of Szemer\'edi's Theorem to the following:

\begin{thm}[Multiple Recurrence Theorem]\label{thm:MRT}
If $T:\bbZ\actson (X,\mu)$ is a probability-preserving action on a standard Borel probability space and $A \subset X$ is measurable and has $\mu(A) > 0$, then also
\[\liminf_{N\to\infty}\frac{1}{N}\sum_{n=1}^N\mu(A\cap T^{-n}A\cap\cdots\cap T^{-(k-1)n}A) > 0\quad\quad\forall k\geq 1.\]
\end{thm}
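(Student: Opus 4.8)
The approach I would take is the one advertised in the title: run an ergodic-theoretic analogue of the Roth--Gowers density-increment iteration. Two routine reductions come first. By Fatou's lemma the $\liminf$ of the averages in question is at least the integral, over the ergodic decomposition of $\mu$, of the corresponding $\liminf$s for the ergodic components; and since $\mu(A)>0$ there is a set of components of positive measure on which the conditional mass of $A$ is at least $\mu(A)/2$. So it suffices to treat an ergodic system, and in fact to produce for each fixed $k$ a lower bound of the form $c(k,\delta)>0$ depending only on $k$ and $\delta:=\mu(A)$, the dependence on the system having been squeezed out. A bound that is quantitative in $\delta$ is what makes the ergodic-decomposition step close, and it is exactly what a density-increment argument is designed to deliver.

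The engine is a dichotomy played out along an increasing chain of \emph{structured factors} $\mathcal{Y}_0\subseteq\mathcal{Y}_1\subseteq\cdots$ of the system, or of successive extensions of it --- the ergodic counterparts of the progressively finer Bohr sets or subprogressions of the combinatorial proof. Starting from the trivial factor, at stage $j$ I would ask whether $1_A$ is \emph{uniform} relative to $\mathcal{Y}_j$ in the sense relevant to $k$-fold averages: whether $1_A-\mathbb{E}(1_A\mid\mathcal{Y}_j)$ is small in the seminorm controlling those averages --- the $L^2$ norm together with spectral and van der Corput estimates when $k=3$, the Gowers--Host--Kra seminorm $\|\cdot\|_{U^{k-1}}$ in general --- small enough that replacing $1_A$ by $f_j:=\mathbb{E}(1_A\mid\mathcal{Y}_j)$ changes the $k$-fold recurrence average by at most $\tfrac12 c(k,\delta)$. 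If it is, the iteration halts. If not, an inverse step extracts a bounded \emph{structured} function, not measurable with respect to $\mathcal{Y}_j$, with which $1_A$ correlates appreciably --- an eigenfunction when $k=3$, a function arising from an isometric or Conze--Lesigne-type extension for larger $k$. Subtracting its conditional expectation on $\mathcal{Y}_j$ and adjoining the result (after passing, if necessary, to an extension of the system carrying it) produces $\mathcal{Y}_{j+1}\supsetneq\mathcal{Y}_j$ with $\|\mathbb{E}(1_A\mid\mathcal{Y}_{j+1})\|_{L^2}^2\ge\|\mathbb{E}(1_A\mid\mathcal{Y}_j)\|_{L^2}^2+c'(k,\delta)$ for a definite $c'(k,\delta)>0$. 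Since this monotone ``energy'' lies in $[0,\delta]$, the non-uniform alternative can occur at most $\delta/c'(k,\delta)$ times, so the iteration terminates after boundedly many stages.

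When it halts we are left with $f_j\in[0,1]$, $\int f_j=\delta$, living on a \emph{structured} factor $\mathcal{Y}_j$, and I would finish by computing there. For $k=3$, having only adjoined eigenfunctions, $\mathcal{Y}_j$ is a compact abelian group rotation, minimal by ergodicity, so $\tfrac1N\sum_{n=1}^N\int f_j(z)f_j(z+n\alpha)f_j(z+2n\alpha)\,dz$ converges to the normalized three-progression count $\int\int f_j(u)f_j(u+v)f_j(u+2v)\,du\,dv$, which is bounded below by a positive function of $\delta$ alone by Varnavides's theorem in the compact abelian setting --- essentially Roth's original argument run on $\mathcal{Y}_j$. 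For general $k$ one invokes the corresponding Varnavides-type bound on nilsystems. Combined with the uniformity estimate, this yields $c(k,\delta)$ and the theorem follows.

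The step I expect to be the main obstacle --- and where the ergodic implementation diverges most from its finitary model --- is making the non-uniform alternative genuinely increment a structured factor. On the combinatorial side one simply restricts to a Bohr set or subprogression; the natural ergodic analogue, a level set of an eigenfunction, is not $T$-invariant, so there is no subsystem to pass to. The increment must instead be realized at the level of factor $\sigma$-algebras, adjoining the offending structured function --- and, for $k>3$, the entire nilpotent structure it belongs to, which in general forces a passage to an inverse limit of Conze--Lesigne/Host--Kra-type extensions --- and one must check that the whole dichotomy is quantitative and stable enough under these extensions for the bookkeeping to survive: the error in the uniform case controllable by a single $\eta=\eta(k,\delta)$ uniformly over all systems, the correlation in the non-uniform case with a function of bounded complexity so the factor grows in a controlled way, and the seminorm-control inequalities available in the relative form needed over an arbitrary $\mathcal{Y}_j$. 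Supplying these is where the technical results needed en route would go; for larger $k$ this is also where the argument would have to lean on, rather than reprove, the existing structure theory for characteristic factors.
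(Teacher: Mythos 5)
Your outline would plausibly yield the theorem, but it is not the argument the paper implements: you have written down an \emph{energy-increment} proof, whereas the paper's whole point is to give a genuine \emph{density-increment} proof and to contrast the two. In your main loop you keep one system (up to extensions), build a chain of factors $\mathcal{Y}_0\subseteq\mathcal{Y}_1\subseteq\cdots$, and drive up the monotone quantity $\|\mathbb{E}(1_A\mid\mathcal{Y}_j)\|_{L^2}^2$ until $1_A-\mathbb{E}(1_A\mid\mathcal{Y}_j)$ is sufficiently uniform, then compute on the final structured factor; this is essentially the Furstenberg/Host--Kra/Tao route, which the paper explicitly acknowledges (with a pointer to Tao~\cite{Tao06}) and declines to follow, since it amounts to reconstructing the full characteristic factor before proving anything about recurrence. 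The paper's increment instead iterates on the measure $\mu(A)$ of the distinguished set across a changing sequence of \emph{processes}. Proposition~\ref{prop:erg-dens-inc} takes a process with $\mu(A)=\delta$ and no $k$-APs in its return times, applies the inverse step (Corollary~\ref{cor:corn-on-nilsys}) to obtain a single nilrotation factor $\pi$ on which $\mathbb{E}(A\mid\pi)$ deviates from $\delta$, locates a cell $U$ with $\mu(A\mid\pi^{-1}U)$ appreciably larger than $\delta$, and then --- using recurrence \emph{inside the nilrotation} (Theorem~\ref{thm:nilsys-MRT} via Proposition~\ref{prop:rec-in-nilsys}) --- extracts a set $B$ that is nearly $T^r$-invariant and on whose translates $T^{-rn}B$, $|n|\le N$, the conditional density of $A$ is at least $\delta+c_k(\delta)$. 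Corollary~\ref{cor:erg-dens-inc} then pushes $\mu(\cdot\mid B)$ forward along the $T^r$-orbit into $\{0,1\}^{\mathbb{Z}}$ and takes a vague limit to produce a \emph{new} process with distinguished-set measure at least $\delta+c_k(\delta)$ and still no $k$-APs in its return times; iterating sends the density past $1$, a contradiction. That vague-limit construction is precisely the step you flagged as the main obstacle and then resolved in the other direction: you observed that the ergodic analogue of a Bohr set is not $T$-invariant and concluded the increment must therefore be carried out on factor $\sigma$-algebras, but the paper shows $B$ need not be invariant because one does not remain inside the original system --- one manufactures a fresh process, exactly as Roth rescales a subprogression back to a full interval. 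This is also what lets the argument run on the inverse statement Corollary~\ref{cor:corn-on-nilsys} alone at each stage, rather than the full structure Theorem~\ref{thm:HK}, which is the methodological contrast the paper is built to illustrate.
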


Furstenberg's proof of Theorem~\ref{thm:MRT} relied on a powerful structural classification of probability-preserving dynamical systems developed independently by Furstenberg and by Zimmer~(\cite{Zim76.1,Zim76.2}).

Shortly after that proof appeared, Furstenberg and Katznelson realized that only a modest adaptation yields a significantly stronger result.

\begin{thm}[Multidimensional Multiple Recurrence Theorem]\label{thm:multiMRT}
If $T_1, T_2, \ldots, T_d: \bbZ\actson (X,\mu)$ are commuting probability-preserving actions on a standard Borel probability space and $A \subset X$ has $\mu(A) > 0$ then also
\[\liminf_{N\to\infty}\frac{1}{N}\sum_{n=1}^N\mu(A\cap T_1^{-n}A\cap\cdots\cap T_d^{-n}A) > 0.\]
\end{thm}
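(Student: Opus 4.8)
\smallskip

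\noindent\emph{Proof strategy.}
The plan is to run a density-increment argument directly on the measure-preserving system, following an ergodic-theoretic counterpart of Shkredov's finitary treatment of corners. Fix $\delta>0$ and commuting probability-preserving actions $T_1,\dots,T_d:\bbZ\actson(X,\mu)$ with $\mu(A)\geq\delta$; we will in fact bound
\[\liminf_{N\to\infty}\frac1N\sum_{n=1}^N\mu\bigl(A\cap T_1^{-n}A\cap\dots\cap T_d^{-n}A\bigr)\]
below by a positive number $c(\delta,d)$ depending only on $\delta$ and $d$. The engine is a dichotomy that can be invoked only boundedly often: \emph{either} the displayed average is already at least a fixed positive function of the current density of $A$, \emph{or} one may pass to a ``daughter'' system --- a factor of $(X,\mu,T_1,\dots,T_d)$ composed with restriction to a positive-measure, suitably structured region --- inside which a version of $A$ has density larger by a factor at least $1+c$, for some $c=c(\delta)>0$, and which by construction still has small multiple-recurrence average. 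Since densities never exceed $1$, iterating the dichotomy and tracking the constants yields the stated lower bound.

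The structured regions are the ergodic shadows of the products of Bohr sets appearing in Shkredov's argument: they are built from the finite-dimensional unitary subrepresentations --- equivalently, the eigenfunctions, or the maximal isometric factor $\mathcal{Z}_1(T_i)$ --- attached to each commuting transformation $T_i$ separately. The first key step is the inverse theorem that supplies the dichotomy: one estimates the multiple-recurrence average by iterated van der Corput and Cauchy--Schwarz manipulations, peeling the transformations off one at a time, until control of the average is reduced to control of averages in a single direction, and then concludes that a small average forces $1_A-\mu(A)$ to correlate nontrivially with a bounded-complexity function measurable with respect to $\mathcal{Z}_1(T_i)$ for some $i$. Here --- and this is the point where the two-dimensional bookkeeping of Shkredov, rather than Roth's one-dimensional argument, becomes necessary --- one must further arrange that the structured factor so extracted is a genuine factor of the \emph{whole} commuting system, with the remaining $T_j$ carried along intact.

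The second key step converts such a correlation into an honest density increment. Partition the compact group underlying $\mathcal{Z}_1(T_i)$ into finitely many small cells on which the structured function is nearly constant; the correlation then forces $A$ to have relative density larger by a factor at least $1+c$ on the preimage $W$ of one of the cells. The set $W$ is not $T_i$-invariant, but it is approximately invariant under $T_i^n$ for a syndetic set of $n$ --- the dynamical reflection of the fact that a Bohr set is almost closed under a positive-density set of shifts --- and it is this near-invariance that lets one manufacture the daughter system, by conditioning on and inducing over the appropriate factor while preserving the commuting structure, and then restart the argument with the larger density.

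The hard part will be making this ``restrict and recurse'' operation legitimate in the ergodic category. Unlike $\bbZ_N$, a probability-preserving $\bbZ^d$-system contains no literal subprogression to descend into, so the passage to a structured region must be realized through a combination of conditioning on factors and inducing, carried out so that the $d$ transformations stay commuting and measure-preserving and so that the errors thereby introduced remain genuinely negligible against the fixed multiplicative gain $1+c$ in the density. Moreover each iteration replaces a region by a strictly smaller one, so a priori the descent could continue forever; excluding this --- whether by an energy or compactness argument on the sequence of daughter systems (for instance by passing to a suitable inverse limit) or by extracting a convergent subsequence of the relevant conditional densities --- is the step demanding the most care, and it is exactly here that the commuting-structure constraint, absent from the one-dimensional Theorem~\ref{thm:MRT}, forces us to import Shkredov's ideas rather than Roth's alone.
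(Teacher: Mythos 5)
Your proposal claims to prove the full Theorem~\ref{thm:multiMRT} for arbitrary $d$ by density increment, but this overreaches: the paper explicitly does \emph{not} do this, and states that ``a further enhancement of these ideas that will yield a density-increment proof of the full Theorem~\ref{thm:multiSzem}, with or without improved bounds, still seems relatively distant.'' The paper proves only the case $d=2$ (Theorem~\ref{thm:Shk}), citing the general theorem to Furstenberg and Katznelson, whose argument is \emph{not} a density increment but a structure-theoretic induction through a tower of compact extensions. So the comparison must be against the paper's $d=2$ argument, and even there your sketch has substantive gaps.

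The central technical misstep is your identification of the structured factors. You claim that van der Corput and Cauchy--Schwarz reduce matters to correlating $1_A - \mu(A)$ with a function measurable with respect to the Kronecker factor $\mathcal{Z}_1(T_i)$ of some single $T_i$. That is the correct picture only at Roth complexity one; for the corner averages the characteristic factors are \emph{not} the Kronecker factors of the individual $T_i$, but joins of partially-invariant $\sigma$-algebras $\zeta_0^{(1,0)} \vee \zeta_0^{(0,1)}$, $\zeta_0^{(1,0)} \vee \zeta_0^{(1,-1)}$, $\zeta_0^{(1,-1)} \vee \zeta_0^{(0,1)}$, and even this clean description is available only after passing to a ``pleasant'' extension of the original system (Theorem~\ref{thm:Fberg-struct}). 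The Kronecker factor of the whole $\bbZ^2$-action does enter, but only in describing the joint distribution of those partially-invariant factors (Lemma~\ref{lem:joint-dist-of-part-invts}); it is not itself what you correlate against. Without passing to the pleasant extension and without the correct identification of $\pi_0, \pi_1, \pi_2$, the inverse step in your dichotomy is simply false.

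You also omit what is arguably Shkredov's key structural innovation, which the paper translates as the notion of an \emph{augmented process} $(X \supset E_1 \cap E_2 \supset A, \mu, T_1, T_2)$ with $E_i$ required to be $T_i$-invariant and satisfy the independence conditions $E_i \perp T_j^n(E_i)$ for all $n \neq 0$. The density increment is carried out for $\mu(A \mid E_1 \cap E_2)$, \emph{not} $\mu(A)$, and Proposition~\ref{prop:Shk-main-estimate} shows that it is precisely the independence hypotheses on the $E_i$ that force $\sfE(A \mid \pi_0)$ to deviate from $\mu(A \mid E_1 \cap E_2)$ by an amount depending only on that relative density. Your ``daughter system'' paragraph gestures at conditioning on a cell of a partition of a compact group, which is roughly right, but the paper's Proposition~\ref{prop:Shk-inc} has to work substantially harder: it produces the new $E_1', E_2'$ as cylinder sets in a symbolic extension, engineers the new independence conditions via Lemma~\ref{lem:weak-rel-weak-mix} and a careful choice of $r$ from a Bohr set, and only then passes to a vague limit. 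Your worry about the descent running forever is in fact the easy part --- it is handled because $c(\delta)$ is bounded below on $[\delta_0,1]$ --- but the legitimacy of the restriction step hinges entirely on the augmented-process bookkeeping you do not have. Finally, for $d \geq 3$ the relevant characteristic factors (see Theorem~\ref{thm:super-Fberg-struct}) involve two-step nilrotations and joint-distribution questions that the paper identifies as open, so the recursion you describe cannot close at this level of generality with present technology.
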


This appeared in~\cite{FurKat78}.  Theorem~\ref{thm:MRT} follows from Theorem~\ref{thm:multiMRT} by setting $d := k-1$ and $T_i := T^i$ for $i\leq k-1$.  On the other hand, Theorem~\ref{thm:multiMRT} also has a combinatorial consequence that strengthens Szemer\'edi's Theorem:

\begin{thm}\label{thm:multiSzem}
If $E \subset \bbZ^d$ admits some $\delta > 0$ for which there are cuboids $\prod_{i\leq d}[M_i,N_i]$ with $\min_{i\leq d}|N_i - M_i|$ arbitrarily large and
\[\Big|E\cap \prod_{i\leq d}[M_i,N_i]\Big| \geq \delta \prod_{i\leq d}(N_i - M_i),\]
then $E$ also contains the set of vertices of a nondegenerate upright right-angled isosceles simplex:
\[E \supset \{\bf{a},\bf{a}+n\bf{e}_1,\ldots,\bf{a} + n\bf{e}_d\}\quad\quad\hbox{for some }\bf{a}\in\bbZ^d,n\geq 1.\]
\end{thm}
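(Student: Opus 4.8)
The plan is to obtain Theorem~\ref{thm:multiSzem} as a purely formal consequence of Theorem~\ref{thm:multiMRT}, via the multidimensional form of Furstenberg's correspondence principle. Set $X := \{0,1\}^{\bbZ^d}$ with the product topology, so that $X$ is a compact metrizable space and hence carries a standard Borel structure; for $i = 1,\ldots,d$ let $S_i : X\to X$ be the coordinate shift $(S_i x)(\bf{m}) := x(\bf{m}+\bf{e}_i)$, and more generally write $S^{\bf{m}} := S_1^{m_1}\cdots S_d^{m_d}$ for $\bf{m}\in\bbZ^d$. The maps $S_1,\ldots,S_d$ are commuting homeomorphisms of $X$. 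Let $x_E := \bf{1}_E\in X$ be the indicator point of the given set $E$, and let $A := \{x\in X : x(\bf{0}) = 1\}$, which is a clopen subset of $X$.

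First I would convert the density hypothesis into an invariant measure. By assumption there are cuboids $Q_k := \prod_{i\leq d}[M_i^{(k)},N_i^{(k)}]$ with $\min_{i\leq d}|N_i^{(k)}-M_i^{(k)}|\to\infty$ and $|E\cap Q_k|\geq\delta|Q_k|$. Form the empirical probability measures $\mu_k := |Q_k|^{-1}\sum_{\bf{m}\in Q_k}\delta_{S^{\bf{m}}x_E}$ on $X$, and, using weak-$*$ compactness of the space of Borel probability measures on the compact space $X$, pass to a subsequence along which $\mu_k\to\mu$ for some Borel probability measure $\mu$. Two properties transfer to the limit. Since $A$ is clopen, $\bf{1}_A$ is continuous, whence $\mu(A) = \lim_k\mu_k(A) = \lim_k|E\cap Q_k|/|Q_k|\geq\delta > 0$. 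And $\mu$ is $S_i$-invariant for each $i$: for any continuous $f$ and any fixed $\bf{v}\in\bbZ^d$,
\[\Big|\int f\circ S^{\bf{v}}\,\d\mu_k - \int f\,\d\mu_k\Big| \leq \|f\|_\infty\,\frac{|Q_k\triangle(Q_k+\bf{v})|}{|Q_k|},\]
and the right-hand side tends to $0$ because the condition $\min_{i\leq d}|N_i^{(k)}-M_i^{(k)}|\to\infty$ makes $(Q_k)_k$ a F\o lner sequence in $\bbZ^d$. Hence $S_1,\ldots,S_d$ and $A\subset X$, together with $\mu$, satisfy the hypotheses of Theorem~\ref{thm:multiMRT}.

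Then I would push the conclusion of Theorem~\ref{thm:multiMRT} back through the correspondence. That theorem supplies some $n\geq 1$ with $\mu(B) > 0$ for $B := A\cap S_1^{-n}A\cap\cdots\cap S_d^{-n}A$. The set $B$ is again clopen, so $\mu_k(B)\to\mu(B) > 0$ and therefore $\mu_k(B) > 0$ for all sufficiently large $k$; fixing such a $k$ gives some $\bf{m}\in Q_k$ with $S^{\bf{m}}x_E\in B$. Unwinding definitions, $S^{\bf{m}}x_E\in A$ says $x_E(\bf{m}) = 1$, i.e.\ $\bf{m}\in E$, while $S^{\bf{m}}x_E\in S_i^{-n}A$ says $\big(S_i^n S^{\bf{m}}x_E\big)(\bf{0}) = x_E(\bf{m}+n\bf{e}_i) = 1$, i.e.\ $\bf{m}+n\bf{e}_i\in E$, for every $i\leq d$. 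Taking $\bf{a} := \bf{m}$ then exhibits the desired nondegenerate upright right-angled isosceles simplex $\{\bf{a},\bf{a}+n\bf{e}_1,\ldots,\bf{a}+n\bf{e}_d\}\subseteq E$.

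Granting Theorem~\ref{thm:multiMRT}, all of the genuine content has already been used, and what remains is routine. The one step deserving attention is the passage to the invariant limit $\mu$: it is not enough that some subsequential limit exist, one needs it invariant under each $S_i$ simultaneously, and this is exactly where the cuboids' being a F\o lner sequence --- equivalently, the hypothesis that \emph{every} side length $|N_i-M_i|$, and not merely the volume $\prod_i(N_i-M_i)$, tends to infinity --- is indispensable. The remaining ingredients (weak-$*$ compactness, continuity of the indicator functions of the clopen sets $A$ and $B$, and the symbolic dictionary between points of $X$ and subsets of $\bbZ^d$) are entirely standard.
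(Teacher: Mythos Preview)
Your argument is correct and is precisely the multidimensional Furstenberg correspondence principle that the paper invokes to pass between Theorem~\ref{thm:multiMRT} and Theorem~\ref{thm:multiSzem}; the paper does not spell this deduction out but simply cites the correspondence principle (referring to Bergelson~\cite{Ber96}) and states Theorem~\ref{thm:multiSzem} as the combinatorial consequence. Your write-up is a faithful unpacking of that citation, including the key observation that the hypothesis $\min_i|N_i-M_i|\to\infty$ is exactly what makes the chosen cuboids a F{\o}lner sequence and hence forces the limiting measure to be invariant under every $S_i$.
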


Interestingly, this result went unproven by purely combinatorial means until the development of hypergraph analogs of Szemer\'edi's famous Regularity Lemma by Nagle, R\"odl and Schacht~\cite{NagRodSch06}, Gowers~\cite{Gow07} and~\cite{Tao06(hyperreg)}, more than twenty years later.  In addition, several other purely combinatorial assertions have now been accessed though `Ergodic Ramsey Theory', the subject that emerged from Furstenberg and Katznelson's early developments, including a density version of the Hales-Jewett Theorem~\cite{FurKat91} and a density Ramsey Theorem for subtrees of trees~\cite{FurWei03}.

Within ergodic theory, a great deal of energy has now been spent on obtaining the most precise possible understanding of the averages whose limit infima are the subjects of Theorems~\ref{thm:MRT} and~\ref{thm:multiMRT}; we will return to some of these developments later.

\subsection{The density-increment argument}

The `density-increment argument' was first used by Roth for his early proof of the case $k=3$ of Theorem~\ref{thm:Szem}.  Much more recently, Gowers developed in~\cite{Gow98,Gow01} an extremely sophisticated extension of Roth's approach, and using this was able to give a density-increment proof of the full Szemer\'edi Theorem.

We will not spend time here on the many technical accomplishments involved in Gowers' work, requiring a call to tools from yet other parts of arithmetic combinatorics such as Freiman's Theorem.  Rather we record just a simple statement of the density-increment proposition that lies at its heart.

\begin{prop}\label{prop:dens-inc}
Suppose that $\delta > 0$, that $N$ is sufficiently large and that $E \subset \{1,2,\ldots,N\}$ has $|E| \geq \delta N$ but contains no $k$-term arithmetic progression.  Then there is an arithmetic progression $P \subset \{1,2,\ldots,N\}$ of size at least $N^{((\delta/2)^{k2^k})^{2^{2^{k+8}}}}$ such that \[|E\cap P| \geq (\delta + ((\delta/2)^{k2^k})^{2^{2^{k+8}}})|P|.\] \qed
\end{prop}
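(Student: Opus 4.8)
The plan is to follow the density-increment argument of Gowers~\cite{Gow98,Gow01}, which rests on three ingredients and whose difficulty is concentrated almost entirely in the second; that second ingredient I would import wholesale, since reproducing it is exactly the body of technical work the introduction has declined to revisit. First, one converts ``no $k$-term progression'' into ``large uniformity norm''. After the routine reductions---restricting to a subinterval of prime length and moving between $\{1,\dots,N\}$ and a cyclic group, which costs only $k$-dependent factors together with an additive $o_N(1)$ that the hypothesis ``$N$ sufficiently large'' absorbs---one works with the balanced function $f := 1_E - \delta_0$, where $\delta_0 := |E|/N \geq \delta$. The \emph{generalized von Neumann inequality}, proved by a bounded number of applications of the Cauchy--Schwarz inequality (peeling off the progression-directions one at a time, each step introducing a parallelepiped structure), bounds the normalized count of $k$-term progressions weighted by any product of functions of modulus at most $1$ by the Gowers uniformity norm $\|\cdot\|_{U^{k-1}}$---the norm naturally attached to $k$-APs---of any one of those functions. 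Since $E$ contains no nondegenerate $k$-AP, its normalized progression count is $O(1/N)$; expanding $1_E = \delta_0 + f$ multilinearly, the all-constant term contributes $\delta_0^k$ while each of the other $2^k-1$ terms has modulus at most $\|f\|_{U^{k-1}}$, so $\|f\|_{U^{k-1}} \gg_k \delta^k$ once $N$ is large.

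The second and crucial ingredient is Gowers' quantitative \emph{inverse theorem} for $\|\cdot\|_{U^{k-1}}$: a lower bound $\|f\|_{U^{k-1}} \geq \eta$ on a function of modulus at most $1$ forces $f$ to correlate, on some genuine arithmetic subprogression $Q \subset \{1,\dots,N\}$ of length at least $N^{c_1}$, with a linear exponential phase, in the sense that $\bigl|\bbE_{x\in Q}\, f(x)\,\rme^{-2\pi\rmi\xi x}\bigr| \geq c_1$, where $c_1$ is an explicit tower-exponential function of $\eta$ and of $k$. The internal mechanism of this step is itself elaborate: repeated Cauchy--Schwarz shows that for many shifts $h$ the multiplicative derivative $x \mapsto f(x+h)\overline{f(x)}$ retains largeness of its $\|\cdot\|_{U^{k-2}}$ norm; the ensuing additive structure among the level sets of an associated family of functions is fed through the Balog--Szemer\'edi--Gowers theorem and then Freiman's theorem to place a large set inside a generalized arithmetic progression of bounded rank; the geometry of numbers extracts a long honest subprogression from that; and Weyl-type exponential-sum estimates are used to reduce correlation with a polynomial phase of degree $k-2$ to correlation with a linear phase on a further subprogression. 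This is where the tower-type dependence on $\delta$ is born, and it is the one genuine obstacle; I would cite the entire step from~\cite{Gow01} rather than reprove it.

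The third ingredient is the classical density-increment deduction of Roth. One partitions $Q$ into $O(1/c_1)$ subprogressions $Q_1,\dots,Q_m$, each of length $\gg c_1|Q|$, on each of which $\rme^{2\pi\rmi\xi x}$ is constant to within $c_1/100$---possible because $x\mapsto\rme^{2\pi\rmi\xi x}$ accumulates only a bounded total variation along an arithmetic progression. Replacing $\rme^{-2\pi\rmi\xi x}$ by its near-constant value on each $Q_j$ inside the correlation inequality yields $\sum_{j=1}^m \bigl|\sum_{x\in Q_j}(1_E(x)-\delta_0)\bigr| \gg c_1|Q|$; since $\sum_j|Q_j| = |Q|$, some single $Q_j$ satisfies $\sum_{x\in Q_j}(1_E(x)-\delta_0) \geq c_2|Q_j|$ with $c_2 \gg c_1$, that is, $|E\cap Q_j| \geq (\delta_0 + c_2)|Q_j| \geq (\delta + c_2)|Q_j|$. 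Taking $P := Q_j$, which has length $\geq c_1 N^{c_1} \gg N^{c_3}$, and propagating the chain of constants $\eta \asymp \delta^k \to c_1 \to c_2 \to c_3$ through the doubly-exponential-in-$k$ bound supplied by the inverse theorem, one recovers the stated progression length $N^{((\delta/2)^{k2^k})^{2^{2^{k+8}}}}$ and density increment $((\delta/2)^{k2^k})^{2^{2^{k+8}}}$---a final step that involves no new idea beyond careful bookkeeping of these losses.
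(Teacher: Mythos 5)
Your proposal is correct and follows the same strategy as the paper, which offers no proof of its own for this proposition: the statement carries a QED mark and the ensuing discussion explains that it amounts to the conjunction of Gowers' Corollary 3.6, Theorem 18.1, and Lemma 5.15 in~\cite{Gow01}. Your three ingredients --- the generalized von Neumann inequality converting the absence of $k$-APs into a lower bound on $\|1_E - \delta_0\|_{U^{k-1}}$, Gowers' quantitative inverse theorem for that norm, and the Roth-type passage from correlation with a linear phase on a subprogression to a single subprogression of enhanced density --- line up exactly with those three citations, so you have simply fleshed out what the paper merely points to.
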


This proposition is implicit in~\cite{Gow01}, but does not appear in the above form because Gowers presents his argument in terms of the crucial auxiliary notion of `higher-degree uniformity', and splits the above result into several pieces that are connected via this auxiliary notion.

This kind of uniformity is defined in terms of the Gowers uniformity norms (Section 3 of~\cite{Gow01}; see also Chapter 11 of Tao and Vu~\cite{TaoVu06}) that have since become widely used in additive combinatorics.  Uniformity of degree $1$ can be described simply in terms of the presence of some large values among the Fourier coefficients of $1_E$, regarded as a function on the group $\bbZ/N\bbZ$; this is essentially the notion that Roth uses in his approach for $k=3$.  Higher-degree uniformity extends this property, although it is not so easily described using Fourier analysis.  In his more general setting, Gowers proves on the one hand that if $E$ is sufficiently uniform of degree $k - 2$ then it contains a $k$-term arithmetic progression (Corollary 3.6 in~\cite{Gow01}), and on the other that if $E$ is not sufficiently uniform of degree $k-2$ then we may partition $\{1,2,\ldots,N\}$ into long arithmetic subprogressions such that $E$ has a relative density inside some of these subprogressions that is substantially larger than $\delta$ (Theorem 18.1 in~\cite{Gow01}). This fact can then be used to pick out one such subprogression satisfying the above conclusion (Lemma 5.15 in~\cite{Gow01}).  Proposition~\ref{prop:dens-inc} amounts to the conjunction of these facts.

Our proof of Theorem~\ref{thm:MRT} below takes a similar form (although we should stress that our task is much simpler than Gowers'), using an ergodic-theoretic analog of the notion of `uniformity' arising in work of Host and Kra. Similarly to the presentation in~\cite{Gow01}, we will find that handling the consequences of non-uniformity is the more complicated of the two steps involved.

From Proposition~\ref{prop:dens-inc} a proof of Szemer\'edi's Theorem follows quickly by contradiction.  If $E$ is a counterexample of density $\delta$ and $N$ is sufficiently large, then for a subprogression $P$ as given by Proposition~\ref{prop:dens-inc} we see that $E\cap P$, identified with a subset of $\{1,2,\ldots,|P|\}$ by the obvious affine map, is another counterexample with density that exceeds $\delta$ by an amount depending only on $\delta$ and $k$.  It is contained in a discrete interval of length $|P| = N^{\kappa(\delta,k)}$ for some small fixed $\kappa(\delta,k) > 0$.  Therefore, provided $N$ was sufficiently large to begin with, iterating this construction must eventually turn a counterexample of density at least $\delta$ into a counterexample of density greater than $1$: an obvious contradiction.

In addition to its aesthetic value, Gowers' new proof of
Szemer\'edi's Theorem gives much the best known
bound on how large $N$ must be taken in order that a $k$-term arithmetic progression is certain to be
found in a density-$\delta$ subset $E\subset \{1,2, \ldots,N\}$.  In view of this, it was natural to ask
whether this approach could also be brought to bear on the
multidimensional Theorem~\ref{thm:multiSzem} in order to give a
similarly striking improvement to the bounds available there.  Gowers
poses this problem explicitly and offers some discussion of it in
his survey~\cite{Gow00}.  Recently Shkredov has made the first
serious progress on this problem by essentially solving the case $d
= 2$ in~\cite{Shk06}, applying some important new technical
ideas that are needed to prove and then use a relative of
Proposition~\ref{prop:dens-inc}.  However, a further enhancement of these
ideas that will yield a density-increment proof of the full
Theorem~\ref{thm:multiSzem}, with or without improved bounds,
still seems relatively distant.

\subsection{Outline of this note}

The centrepieces of this note are `density-increment' proofs of the Multiple Recurrence Theorem~\ref{thm:MRT} and the case $d = 2$ of Theorem~\ref{thm:multiMRT}, corresponding to Gowers' and Shkredov's combinatorial implementations of the density-increment argument respectively.

The main steps taken by Gowers and Shkredov do have counterparts in these proofs, but we need different structural results from within ergodic theory to enable them.  These will largely be drawn from recent studies of the `nonconventional ergodic averages' whose
limit infima appear in Theorems~\ref{thm:MRT}
and~\ref{thm:multiMRT}.  In particular we rely on the method of
`characteristic factors', which has emerged through the works of
several researchers since Furstenberg's original paper~\cite{Fur77},
and especially on some of the technical steps in Host and Kra's
proof~(\cite{HosKra05}) of convergence for the averages of~\ref{thm:MRT} and in the
work of Conze and Lesigne~\cite{ConLes84,ConLes88.1,ConLes88.2} and the subsequent
works~\cite{Aus--nonconv,Aus--newmultiSzem} on the multi-dimensional case.  Many other researchers have contributed to this story within ergodic theory, including Rudolph, Zhang, Katznelson, Weiss, Ziegler, Frantzikinakis and Chu, and the reader is referred to~\cite{Aus--thesis} for a more complete discussion.

The basic ergodic theoretic version of the density-increment
argument for Theorem~\ref{thm:MRT} will be introduced in Subsection~\ref{subs:intro-dens-inc} and then used to complete the proof of that theorem later in Section~\ref{sec:1D}.  Although a density increment is central to Shkredov's proof as well, he uses it in a slightly more complicated way, and so in Section~\ref{sec:2D} we introduce the ergodic theoretic analog of this separately and then use it to prove the case $d=2$ of Theorem~\ref{thm:multiMRT}.

On the one hand, I hope that these proofs shed some light on the nature of the density-increment argument.  On the other, it seems that recent progress in ergodic theory is beginning to address some of the problems of extending this approach to give a density-increment proof of the whole of Theorem~\ref{thm:multiMRT} (and so, one might hope, also to give a finitary density-increment proof of Theorem~\ref{thm:multiSzem}, as requested by Gowers).  In the final Section~\ref{sec:2Dprobs} we will draw on results from~\cite{Aus--thesis,Aus--lindeppleasant1,Aus--lindeppleasant2} to sketch some of the further developments suggested by this progress.

\section{Powers of a single transformation}\label{sec:1D}

\subsection{Preliminary discussion}

In this section we show how the density-increment strategy can be used to give a proof of Theorem~\ref{thm:MRT}, building on two important ergodic-theoretic ingredients.  Let us first recall a convenient definition.

\begin{dfn}[Process]\label{dfn:process}
We will refer to a probability-preserving $\bbZ$-system $(X,\mu,T)$ together with a distinguished subset $A$ as a \textbf{process} and denote it by $(X\supset A,\mu,T)$.
\end{dfn}

\begin{dfn}\label{dfn:no-APs-1}
An ergodic process $(X\supset A,\mu,T)$ has \textbf{no $k$-APs in its return times} if
\[\mu(A\cap T^{-n}A\cap \cdots\cap T^{-(k-1)n}A) = 0\quad \hbox{for all}\ n\in \bbZ\setminus\{0\}.\]
\end{dfn}

Clearly if $\mu(A) > 0$ then the above property is stronger than being a counterexample to the Multiple Recurrence Theorem, which requires that the relevant intersections have positive measure on average, not just for a single nonzero $n$.  Since that theorem turns out to be true, the above definition is essentially vacuous, but it will be a convenient handle at various points during the proofs that follow.

The first ingredient we need is a corollary of the recent result of Host and Kra~\cite{HosKra05} that the limiting values of the multiple recurrence averages are precisely controlled by certain special nilrotation factors of a system $(X,\mu,T)$.

\begin{dfn}[Nilrotations]
For any $k\geq 1$ a \textbf{$k$-step nilrotation} is a $\bbZ$-system on a homogeneous space $G/\G$ for $G$ a $k$-step nilpotent Lie group and $\G \leq G$ a cocompact discrete subgroup, where $G/\G$ is endowed with its normalized Haar measure $m$ and the transformation is given by
\[R_g:h\G\mapsto gh\G\]
for some $g \in G$.
\end{dfn}

\begin{thm}[Host-Kra Theorem]\label{thm:HK}
For each $k\geq 2$, any ergodic $\bbZ$-system $\bf{X} = (X,\mu,T)$ has a
factor map $\pi_{k-2}:\bf{X}\to \bf{Z}_{k-2}$ onto a system generated by an inverse sequence of $(k-2)$-step nilrotations such that
\begin{multline*}
\frac{1}{N}\sum_{n=1}^N\int_X f_0\cdot (f_1\circ T^n)\cdot \cdots \cdot (f_{k-1}\circ T^{(k-1)n})\,\d\mu\\
\sim \frac{1}{N}\sum_{n=1}^N\int_X\sfE_\mu(f_0\,|\,\pi_{k-2})\cdot
(\sfE_\mu(f_1\,|\,\pi_{k-2})\circ T^n)\cdot \cdots
\cdot (\sfE_\mu(f_{k-1}\,|\,\pi_{k-2})\circ T^{kn})\,\d\mu
\end{multline*}
as $N\to\infty$ for any $f_0,f_1,\ldots,f_{k-1} \in L^\infty(\mu)$, where the notation asserts that the difference between these two sequences of averages tends to $0$ as $N\to\infty$. \qed
\end{thm}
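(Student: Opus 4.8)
The plan is to follow the scheme of Host and Kra: build a family of ``cubic'' self-joinings and their associated seminorms, show these seminorms control the averages in question, pass to the factor on which the relevant seminorm becomes a norm, and finally identify that factor structurally. First, given the ergodic system $\bf{X} = (X,\mu,T)$, I would construct inductively the tower of probability-preserving systems $\bf{X}^{[j]} = (X^{2^j},\mu^{[j]},T^{[j]})$ by setting $\bf{X}^{[0]} := \bf{X}$ and letting $\bf{X}^{[j+1]}$ be the relatively independent self-joining of $\bf{X}^{[j]}$ over its $\sigma$-algebra $\I^{[j]}$ of $T^{[j]}$-invariant sets, with $T^{[j+1]} := T^{[j]}\times T^{[j]}$; ergodicity of $\bf{X}$ makes this well defined, and $\mu^{[1]} = \mu\times\mu$. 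Writing the coordinates of $X^{2^j}$ as $(x_\eps)_{\eps\in\{0,1\}^j}$, one then defines the Host--Kra seminorms by
\[
|||f|||_j^{2^j} := \int_{X^{2^j}}\prod_{\eps\in\{0,1\}^j}f(x_\eps)\,\d\mu^{[j]}.
\]
The first block of work is routine: one checks that $|||\cdot|||_j$ really is a seminorm (a Cauchy--Schwarz argument applied one coordinate direction at a time, the ergodic analogue of the Cauchy--Schwarz--Gowers inequality), that $|||\cdot|||_j \le |||\cdot|||_{j+1}$, and that $|||f|||_1 = |\int f\,\d\mu|$.

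Second, I would prove the controlling estimate: a van der Corput / iterated Cauchy--Schwarz manipulation on the $N$-averages shows that, for each fixed $i\in\{0,1,\ldots,k-1\}$,
\[
\limsup_{N\to\infty}\Big|\frac1N\sum_{n=1}^N\int_X\prod_{j=0}^{k-1}(f_j\circ T^{jn})\,\d\mu\Big| \le |||f_i|||_{k-1}\prod_{j\neq i}\|f_j\|_\infty
\]
(up to a multiplicative constant depending only on $k$). Replacing the $f_j$ one at a time then shows that the $N$-averages appearing on the left of the displayed equivalence differ by $o(1)$ from the same averages computed with each $f_j$ replaced by its conditional expectation onto \emph{any} factor of $\bf{X}$ on which $|||\cdot|||_{k-1}$ restricts to a genuine norm. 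This step is the most computational, but it is by now entirely standard.

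Third, I would construct the factor $\bf{Z}_{k-2}$. Here one shows that $\{f\in L^\infty(\mu) : |||f|||_{k-1} = 0\}$ coincides with $\{f : \sfE_\mu(f\,|\,\Z_{k-2}) = 0\}$ for a suitable $T$-invariant sub-$\sigma$-algebra $\Z_{k-2}\subseteq\B_X$, so that the factor $\bf{Z}_{k-2}$ generated by $\Z_{k-2}$ is the smallest one on which $|||\cdot|||_{k-1}$ is a norm. Taking $\pi_{k-2}$ to be the factor map onto $\bf{Z}_{k-2}$, the second step now yields exactly the asymptotic equivalence asserted. The only content left is therefore the \emph{structural} claim that $\bf{Z}_{k-2}$ is an inverse limit of $(k-2)$-step nilrotations.

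Finally --- and this is where essentially all the difficulty lies --- I would establish that structural identification by induction on $k$. The cases $k=2$, where $\bf{Z}_0$ is a one-point system, and $k=3$, where $\bf{Z}_1$ is the Kronecker factor (a rotation on a compact abelian group, hence trivially an inverse limit of $1$-step nilrotations), are classical. For the inductive step one first shows that $\bf{Z}_{k-2}$ is a compact abelian group extension of $\bf{Z}_{k-3}$, and then, using the cubic structure from the first step, that the cocycle defining this extension satisfies a higher-order Conze--Lesigne functional equation. The core of Host and Kra's argument is then to prove that any ergodic system arising as such a ``bounded-type'' cocycle extension of an inverse limit of $(k-3)$-step nilrotations is itself an inverse limit of $(k-2)$-step nilrotations: this is carried out by analysing the group of symmetries of the cube system $\bf{X}^{[k-2]}$, showing that it is nilpotent, and using it --- together with a Mackey-theoretic treatment of the cohomology of the extension and the structure theory of nilpotent Lie groups --- to realise each finite stage of $\bf{Z}_{k-2}$ explicitly as a nilmanifold $G/\G$. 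This passage is by far the deepest ingredient; everything before it is either soft or a routine computation, and in the present note I would simply quote it from~\cite{HosKra05} rather than attempt to reproduce it.
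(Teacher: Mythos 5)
The paper does not prove Theorem~\ref{thm:HK}: it is stated with a \qed (i.e., quoted) and attributed in the subsequent remark to Host and Kra~\cite{HosKra05}, with an alternative proof due to Ziegler~\cite{Zie07}. Your sketch is a faithful and, as far as I can tell, correct outline of the Host--Kra argument being cited --- the cube measures $\mu^{[j]}$, the seminorms $|||\cdot|||_j$, the iterated van der Corput/Cauchy--Schwarz estimate controlling the averages by $|||f_i|||_{k-1}$, the characterization of $\bf{Z}_{k-2}$ as the smallest factor on which $|||\cdot|||_{k-1}$ is a genuine norm, and the structure theorem identifying $\bf{Z}_{k-2}$ as an inverse limit of $(k-2)$-step nilrotations --- and you correctly observe that essentially all the difficulty sits in that final structural identification, which you (like the paper) defer to~\cite{HosKra05} rather than reproduce. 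Since the paper itself treats the whole theorem as a black box, there is no genuine divergence here; your outline merely makes explicit the shape of the argument the paper invokes.
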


\begin{rmk}
The above result is often expressed by asserting that the factor $\pi_{k-2}$ is \textbf{characteristic} for the averages in question.  This theorem first appears in~\cite{HosKra05}, where its proof invokes a family of seminorms on $L^\infty(\mu)$ that Host and Kra introduce for this purpose and that are closely analogous to Gowers' uniformity seminorms from~\cite{Gow01}, so offering another point of proximity between the ergodic theoretic and quantitative approaches.  Another proof of Theorem~\ref{thm:HK} has now been given by Ziegler in~\cite{Zie07}, who also shows that the maximal factor of $\bfX$ generated by $(k-2)$-step nilrotations is also the unique minimal factor that is characteristic in the above sense. \fin
\end{rmk}

With the Host-Kra Theorem in mind, the second result that we use is simply the fact that multiple recurrence does hold for nilrotations.

\begin{thm}[Multiple recurrence for nilrotations]\label{thm:nilsys-MRT}
If $R_g \actson G/\G$ is an ergodic nilrotation, $A \subset G/\G$ has positive measure and $K\geq 1$ then there is some $r\geq 1$ such that
\[m(g^{-Kr}A\cap g^{-(K-1)r}A\cap \cdots \cap g^{Kr}A) > 0.\]
\qed
\end{thm}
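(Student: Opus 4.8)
The plan is to deduce this from the fact that an ergodic nilrotation is a \emph{distal} system, for which Furstenberg's structural analysis makes multiple recurrence available, and to indicate also the more hands-on route through equidistribution of linear orbits on nilmanifolds. Before either, note a harmless normalization: since $R_g$ preserves $m$,
\[m\big(g^{-Kr}A\cap g^{-(K-1)r}A\cap\cdots\cap g^{Kr}A\big)=m\big(A\cap g^{-r}A\cap\cdots\cap g^{-2Kr}A\big),\]
so, setting $d:=2K$, it is enough to produce some $r\geq 1$ with $m\big(\bigcap_{j=0}^{d}g^{-jr}A\big)>0$; I will in fact aim to show that $\frac1N\sum_{r=1}^{N}m\big(\bigcap_{j=0}^{d}g^{-jr}A\big)$ tends to a strictly positive limit.

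For the first route, recall that any nilrotation $R_g\actson G/\G$ is distal: the lower central series of $G$ exhibits $G/\G$ as a finite tower of isometric --- indeed, group-rotation --- extensions, and a tower of isometric extensions is distal. By Furstenberg's structure theorem a (minimal) distal system is an inverse limit of a transfinite tower of isometric extensions over the trivial one-point system. The multiple recurrence (``SZ'') property holds trivially for the one-point system and is preserved under isometric extensions and under inverse limits; hence it holds for $G/\G$. Applied with $k=d+1=2K+1$ this gives $\liminf_N\frac1N\sum_{r=1}^{N}m\big(A\cap g^{-r}A\cap\cdots\cap g^{-dr}A\big)>0$, which is exactly what is wanted, and it works for arbitrary measurable $A$.

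The second, more concrete route --- closer in spirit to the quantitative side --- is to pass to the product nilmanifold $(G/\G)^{d+1}=G^{d+1}/\G^{d+1}$ and consider the nilrotation by $\t g:=(e,g,g^{2},\ldots,g^{d})$, whose $r$-th power sends the diagonal point $(x\G,\ldots,x\G)$ to $(x\G,g^{r}x\G,\ldots,g^{dr}x\G)$. One then studies the closure $Y$ of the $R_{\t g}$-orbit of the diagonal subnilmanifold $\Delta(G/\G)$. By the equidistribution theory of linear (more generally polynomial) orbits on nilmanifolds --- in the tradition of Parry, Lesigne and Leibman, and underlying the convergence results used for Theorem~\ref{thm:HK} --- $Y$ is again a subnilmanifold and $R_{\t g}$ is suitably uniquely ergodic on it, so the averages $\frac1N\sum_{r=1}^{N}m\big(\bigcap_{j=0}^{d}g^{-jr}A\big)$ converge, with limit $\nu(A^{d+1})$ for the natural $R_{\t g}$-invariant lift $\nu$ of $m$ to $Y$. (For a general measurable $A$ the indicator $1_{A^{d+1}}$ need not be $\nu$-almost-everywhere continuous, so to run this cleanly for all $A$ one must either restrict attention to topologically nice $A$ or import the measure-theoretic convergence from the Host--Kra-type analysis; this is a technical wrinkle rather than a serious difficulty.)

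The single point that cannot be reduced to soft bookkeeping, and which I expect to be the main obstacle, is the \emph{positivity} of $\nu(A^{d+1})$ --- equivalently, that $R_{\t g}$ genuinely recurs near the diagonal of $Y$ on a set of positive $m$-measure of base points. This is precisely where the distal structure (or an equivalent piece of the nilmanifold machinery) earns its keep: in the first route it is delivered by the lifting of the SZ property through isometric extensions, and in the second route one would have to argue it from the homogeneous fibre structure of $Y$ over the diagonal, using that each coordinate projection $Y\to G/\G$ pushes $\nu$ forward to $m$ and that a positive-measure set $A$ cannot be arranged so as to be avoided simultaneously by all $d+1$ coordinates of a $\nu$-typical orbit. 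Everything else --- the normalization, the reduction to a convergence statement, and the identification of the limit --- is routine.
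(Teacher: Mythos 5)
Your second route is precisely the argument the paper sketches after stating the theorem: pass to $(G/\Gamma)^{2K+1}$, push the diagonal measure $m_\Delta$ by $R_{(g^{-K},\ldots,g^K)}$, and invoke the equidistribution/classification theory of invariant measures for nilrotations (Parry, Lesigne, Leibman, Ziegler, Bergelson--Host--Kra) to show the orbit averages converge to a measure $\nu$ supported on a finite union of closed connected subnilmanifolds containing $\Delta$. You are also right that the one genuinely nontrivial point is the positivity of $\nu(A^{2K+1})$ for arbitrary positive-measure $A$; the paper punts this to Section 2 of Bergelson--Leibman--Lesigne. One caveat: your stated heuristic for positivity --- that a positive-measure set ``cannot be arranged so as to be avoided simultaneously by all $d+1$ coordinates of a $\nu$-typical orbit'' --- is too weak as phrased, since $\nu$ is nothing like a product measure and the coordinate marginals being $m$ alone would not rule out concentration on a null joint event. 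What actually carries the argument is the homogeneous structure: the component of $Y$ containing $\Delta$ is $H/(H\cap\Gamma^{d+1})$ for a closed subgroup $H\leq G^{d+1}$ containing the full diagonal $G^\Delta$, $\nu$ is Haar there, and the disintegration of $\nu$ over the diagonal base has fibres that are translated Haar measures on a fixed compact homogeneous space, from which a Lebesgue-density argument gives $\nu(A^{d+1})>0$. You flag this honestly as the crux, so this is a gap in detail rather than in understanding.

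Your first route (nilrotations are distal via the lower central series, and Furstenberg's lifting of the SZ property through compact extensions and inverse limits gives multiple recurrence) is a genuinely different and, taken in isolation, cleaner proof of this statement. Two remarks. First, the relevant notion here is measure-distality and the Furstenberg--Zimmer tower, not the topological minimal-distal structure theorem you cite; the two happen to agree for ergodic nilrotations, so the conclusion is correct, but the invocation should be the measure-theoretic one. Second, and more to the point of this paper: using Furstenberg's SZ-lifting machinery here is somewhat against the grain of the exercise, since that machinery is exactly the ``other'' proof of multiple recurrence that the density-increment argument is being offered as an alternative to. The paper deliberately routes through the equidistribution theory for nilflows, which is logically independent of the Furstenberg--Zimmer structure theory and keeps the nilmanifold input self-contained. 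So your first route is correct and simpler as a proof of Theorem~\ref{thm:nilsys-MRT} per se, but the second route is the one that fits the architecture the paper is trying to exhibit.
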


In fact, this result is considerably simpler than Theorem~\ref{thm:HK}, which really does the heavy lifting in what follows.  The point is that the orbit of the diagonal $\Delta:= \{(x,x,\ldots,x):\ x \in G/\G\}\subset (G/\G)^{2K+1}$ (or rather, its normalized surface measure $m_\Delta$) under the off-diagonal transformation $R_{(g^{-K},g^{-K+1},\ldots,g^K)}$ (which is clearly still a nilrotation acting on $(G/\G)^{2K+1}$) can be shown to equidstribute in some finite union of closed connected nilsubmanifolds of $(G/\G)^{2K+1}$ that contains the whole of this diagonal set.  This follows from strong results classifying all ergodic invariant measures for nilrotations. From this point a fairly elementary argument gives the positivity of
\begin{multline*}
\liminf_{N\to\infty}\frac{1}{N}\sum_{n=1}^N m(g^{-Kn}A\cap g^{-(K-1)n}A\cap \cdots \cap g^{Kn}A)\\ = \liminf_{N\to\infty}\frac{1}{N}\sum_{n=1}^N \int 1_{A\times A\times \cdots\times A}\circ R_{(g^{-K},g^{-K+1},\ldots,g^K)}^n\,\d m_\Delta,
\end{multline*}
and also the fact that these averages actually converge, so this limit infimum is really a limit. A related instance of this argument can be found presented in detail in Section 2 of the work~\cite{BerLeiLes08} by Bergelson, Leibman and Lesigne, who use it for the related end of proving multiple recurrence along certain families of polynomials.  Equidistribution results for nilrotations on which this reasoning can be founded are available in either Ziegler~\cite{Zie05} or Bergelson, Host and Kra~\cite{BerHosKra05}, which in turn build on older works of Parry~\cite{Par69,Par70,Par73}, Lesigne~\cite{Les91} and Leibman~\cite{Lei98(2),Lei05}.

With Theorems~\ref{thm:HK} and~\ref{thm:nilsys-MRT} at our disposal, it is relatively easy to lay out a density-increment proof of the full Multiple Recurrence Theorem.  However, it is important to observe right away that this is a rather perverse thing to do, because the above two ingredients also imply that theorem through the following even quicker argument:
\begin{itemize}
\item given our process $(X \supset A,\mu,T)$, we wish to prove that
\[\liminf_{N\to\infty}\frac{1}{N}\sum_{n=1}^N\mu(A\cap T^{-n}A\cap \cdots \cap T^{-(k-1)n}A) > 0,\]
so by Theorem~\ref{thm:HK} it suffices to prove instead that
\[\liminf_{N\to\infty}\frac{1}{N}\sum_{n=1}^N\int_X \prod_{i=0}^{k-1}(\sfE(A\,|\,\pi_{k-2})\circ T^{in})\,\d\mu > 0\]
with $\pi_{k-2}:\bfX\to \bfZ_{k-2}$ the inverse limit of nilrotation factors from that theorem (and where we write $\sfE(A\,|\,\pi_{k-2})$ as short for $\sfE(1_A\,|\,\pi_{k-2})$);

\item this, in turn, will follow if we prove that
\[\liminf_{N\to\infty}\frac{1}{N}\sum_{n=1}^N\mu(B\cap T^{-n}B\cap \cdots \cap T^{-(k-1)n}B) > 0\]
where $B := \{\sfE(A\,|\,\pi_{k-2}) > \eps\}$ for any positive $\eps$ chosen so small that $\mu(B) > 0$ (for example, $\eps \leq \mu(A)/2$ will do);

\item finally, importing a simple trick from~\cite{FurKat78}, this follows by choosing a further factor $\a:Z_{k-2}\to G/\G$ onto a finite-dimensional nilrotation such that \[\|\sfE(A\,|\pi_{k-2}) - \sfE(A\,|\,\a\circ\pi_{k-2})\|_1 < \frac{\eps}{100(k+1)}\]
(which is possible because $\pi_{k-2}$ is generated by an inverse sequence of such further factors $\a$) and letting
\[C := \Big\{\sfE(B\,|\,\a\circ\pi_{k-2}) > 1 - \frac{1}{k+1}\Big\},\]
for which we now easily deduced that
\begin{multline*}
\liminf_{N\to\infty}\frac{1}{N}\sum_{n=1}^N\mu(B\cap T^{-n}B\cap \cdots \cap T^{-(k-1)n}B)\\ \geq \frac{1}{2}\liminf_{N\to\infty}\frac{1}{N}\sum_{n=1}^N\mu(C\cap T^{-n}C\cap \cdots \cap T^{-(k-1)n}C),
\end{multline*}
which Theorem~\ref{thm:nilsys-MRT} shows is strictly positive.
\end{itemize}
(This proof is also essentially that used in~\cite{BerLeiLes08} for their instance of polynomial recurrence.)

Therefore the point of this section is not to provide a serious new approach to the Multiple Recurrence Theorem, but rather to exhibit the density-increment strategy in a setting familiar to ergodic theorists.

The reason why the approach to multiple recurrence just sketched does not have a clear analog among quantitative proofs of Szemer\'edi's Theorem is hidden in our appeal to Theorem~\ref{thm:HK}.  In fact, the technical result that drives Gowers' work is really more analogous to the following easy corollary of Theorem~\ref{thm:HK} than to Theorem~\ref{thm:HK} itself:

\begin{cor}\label{cor:corn-on-nilsys}
If an ergodic system $(X,\mu,T)$ and measurable functions $f_0,f_1,\ldots,f_{k-1}:X\to [-1,1]$ are such that
\[\limsup_{N\to\infty}\frac{1}{N}\sum_{n=1}^N\int_X f_0\cdot (f_1\circ T^n)\cdot \cdots \cdot (f_{k-1}\circ T^{(k-1)n})\,\d\mu \geq \gamma > 0,\]
then there is a factor map $\pi:(X,\mu,T)\to (G/\G,m,R_g)$ onto an ergodic $(k-2)$-step nilrotation such that
\[\|\sfE(f_i\,|\,\pi)\|_2 \geq \frac{1}{2}\gamma \quad\hbox{for each}\ i=0,1,\ldots,k-1.\]

In particular, if $(X\supset A,\mu,T)$ is a process with $\mu(A) \geq \delta$ but no $k$-APs in its return times then there is such a factor map $\pi$ for which
\[\|\sfE(A\,|\,\pi) - \mu(A)\|_2 \geq \frac{1}{2k}\delta^k.\]
\end{cor}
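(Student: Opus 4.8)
The plan is to obtain Corollary~\ref{cor:corn-on-nilsys} from the Host--Kra Theorem~\ref{thm:HK} by combining it with an elementary $L^2$-bound for the nonconventional averages, and then to deduce the ``in particular'' clause from the first assertion via a telescoping identity and a pigeonhole. It is convenient to abbreviate
\[\Lambda_N(g_0,\ldots,g_r):=\frac1N\sum_{n=1}^N\int_X g_0\cdot (g_1\circ T^n)\cdot\cdots\cdot(g_r\circ T^{rn})\,\d\mu.\]

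For the first assertion, I would begin with the observation that if $\|g_i\|_\infty\le 1$ for all $i\le r$ then $|\Lambda_N(g_0,\ldots,g_r)|\le\|g_j\|_2$ for every $j\le r$ and every $N$: inside each integral apply the measure-preserving map $T^{-jn}$, use the Cauchy--Schwarz inequality to split the factor $g_j$ off from the product of the others, and bound that remaining product in $L^2$ by its $L^\infty$-norm, which is at most $1$. Now suppose $\limsup_N\Lambda_N(f_0,\ldots,f_{k-1})\ge\gamma>0$. Since $\|\sfE(f_i\,|\,\pi_{k-2})\|_\infty\le 1$ as well, Theorem~\ref{thm:HK} gives $\limsup_N\Lambda_N(\sfE(f_0\,|\,\pi_{k-2}),\ldots,\sfE(f_{k-1}\,|\,\pi_{k-2}))\ge\gamma$, so the estimate above yields $\|\sfE(f_j\,|\,\pi_{k-2})\|_2\ge\gamma$ for each $j$. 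It then remains to replace the inverse-limit factor $\bfZ_{k-2}$ by one of the genuine finite-dimensional nilrotations $\bf{W}_m$ in the inverse sequence generating it: writing $\pi_m:=\alpha_m\circ\pi_{k-2}$ for the induced factor map onto $\bf{W}_m$, the sub-$\sigma$-algebras generated by the $\pi_m$ increase to the one generated by $\pi_{k-2}$, so the martingale convergence theorem gives $\|\sfE(f_j\,|\,\pi_m)\|_2\to\|\sfE(f_j\,|\,\pi_{k-2})\|_2$; hence $\|\sfE(f_j\,|\,\pi_m)\|_2\ge\gamma/2$ simultaneously for all $j$ once $m$ is large enough, and $\pi:=\pi_m$ works, $\bf{W}_m$ being an ergodic $(k-2)$-step nilrotation because $\bfX$ is ergodic.

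For the second assertion, the hypothesis that $(X\supset A,\mu,T)$ has no $k$-APs in its return times gives $\int_X\prod_{i=0}^{k-1}(1_A\circ T^{in})\,\d\mu=0$ for every $n\ne0$. I would expand $1_A=\mu(A)+g$ with $g:=1_A-\mu(A)$ using the telescoping identity $\prod_{i=0}^{k-1}a_i-b^k=\sum_{j=0}^{k-1}b^j\big(\prod_{i=j+1}^{k-1}a_i\big)(a_j-b)$ with $a_i=1_A\circ T^{in}$ and $b=\mu(A)$; integrating, applying a power of $T$ to each summand, and averaging over $n\le N$ then gives $\mu(A)^k=-\sum_{m=1}^{k-1}\mu(A)^{k-1-m}\Lambda_N(g,1_A,\ldots,1_A)$ for every $N$, where the $m$-th term carries $m$ copies of $1_A$. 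The $m=1$ term converges to $\mu(A)\int_X g\,\d\mu=0$ by the mean ergodic theorem, so taking $\limsup_N$ and using $\mu(A)\ge\delta$ with $\mu(A)^{k-1-m}\le 1$ leaves $\delta^k\le\sum_{m=2}^{k-1}\limsup_N|\Lambda_N(g,1_A,\ldots,1_A)|$, a sum of $k-2$ terms; a pigeonhole then produces $m_0\in\{2,\ldots,k-1\}$ with $\limsup_N|\Lambda_N(g,1_A,\ldots,1_A)|\ge\delta^k/(k-2)$ (with $m_0$ copies of $1_A$), and replacing $g$ by $-g$ if necessary (still $[-1,1]$-valued) lets me drop the absolute value. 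Applying the first assertion with $m_0+1$ in place of $k$ (legitimate since $m_0+1\le k$, and any $(m_0-1)$-step nilrotation is a fortiori a $(k-2)$-step one) to the tuple consisting of $g$ followed by $m_0$ copies of $1_A$ produces a factor $\pi$ onto an ergodic $(k-2)$-step nilrotation with $\|\sfE(g\,|\,\pi)\|_2\ge\delta^k/(2(k-2))\ge\delta^k/(2k)$, and since $\sfE(g\,|\,\pi)=\sfE(A\,|\,\pi)-\mu(A)$ this is exactly the asserted bound. (When $k=2$ the sum is empty, forcing $\mu(A)=0$, so the hypothesis is then vacuous.)

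The $L^2$-bound, the telescoping identity, and the pigeonhole step are all routine; the single step that invokes an external tool is the martingale passage from the inverse-limit factor $\bfZ_{k-2}$ down to one finite-dimensional nilrotation, where the only point of care is to secure a level $m$ that serves all $k$ indices at once --- automatic, since there are finitely many. I would also watch the harmless index shift in the second part, checking that descending to fewer than $k$ terms keeps one within the class of $(k-2)$-step nilrotations.
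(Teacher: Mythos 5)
Your proof is correct and follows essentially the same route as the paper's: Host--Kra plus the Cauchy--Schwarz bound $|\Lambda_N(g_0,\ldots,g_r)|\le\|g_j\|_2$, then a martingale descent from $\pi_{k-2}$ to a finite-dimensional nilrotation for the first assertion, and a telescoping decomposition followed by a pigeonhole and an appeal to the first assertion for the second. The only differences are minor bookkeeping in the second part --- you telescope $\prod 1_A\circ T^{in}$ directly rather than $\prod \sfE(A\,|\,\pi_{k-2})\circ T^{in}$, shift the error factor $g$ to position zero, and discard the vanishing $m=0$ and $m=1$ terms to pigeonhole over $k-2$ rather than $k$ terms (invoking the first assertion for a shortened tuple, which you correctly justify via the inclusion of $(m_0-1)$-step among $(k-2)$-step nilrotations) --- yielding the slightly sharper $\delta^k/(2(k-2))$, which of course still implies the stated $\delta^k/(2k)$.
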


\begin{proof} By Theorem~\ref{thm:HK}, the averages in question have the same asymptotic behaviour as the averages
\[\frac{1}{N}\sum_{n=1}^N\int_X f_0'\cdot (f_1'\circ T^n)\cdot \cdots \cdot (f_{k-1}'\circ T^{(k-1)n})\,\d\mu\]
with $f_i' := \sfE(f_i\,|\,\pi_{k-2})$, and now all these functions still lie in the unit ball of $L^\infty(\mu)$, and so for any $i$ we can apply the Cauchy-Schwartz inequality to $f_i\circ T^{in}$ and the product of the remaining factors to deduce that the above average is bounded in absolute value by $\|f'_i\|_2$.  Since the sum of these averages must be greater than $\gamma/2$ infinitely often, this requires that $\|f_i'\|_2 > \gamma/2$ for each $i$; finally, since $\pi_{k-2}$ is generated by further factor maps onto finite-dimensional $(k-2)$-step nilrotations, letting $\pi$ be a large enough one of these gives the first conclusion.

To derive the second conclusion, first use Theorem~\ref{thm:HK} to obtain
\[\frac{1}{N}\sum_{n=1}^N \mu(A\cap T^{-n}A\cap \cdots \cap T^{-(k-1)n}A)
\sim \frac{1}{N}\sum_{n=1}^N\int_X \prod_{i=0}^{k-1} \sfE(A\,|\,\pi_{k-2})\circ T^{in}\,\d\mu\]
as $N\to\infty$, so that if $A$ contains no $k$-APs in its return times then both of these expressions must vanish as $N\to\infty$. 
Now use the decomposition \[\sfE(A\,|\,\pi_{k-2}) = (\sfE(A\,|\,\pi_{k-2}) - \mu(A)) + \mu(A)\]
to form the telescoping sum
\begin{eqnarray*}
&&\frac{1}{N}\sum_{n=1}^N\int_X \prod_{i=0}^{k-1} \sfE(A\,|\,\pi_{k-2})\circ T^{in}\,\d\mu\\
&& =\sum_{i=0}^{k-1} \frac{1}{N}\sum_{n=1}^N \int_X \mu(A)^i\cdot ((\sfE(A\,|\,\pi_{k-2})\circ T^{in} - \mu(A))\\
&&\quad\quad\quad\quad\quad\quad\quad\quad\quad\quad\quad\quad\quad\quad\quad\quad  \cdot \prod_{i < j \leq k-1}(\sfE(A\,|\,\pi_{k-2})\circ T^{jn})\,\d\mu\\
&&\quad+ \mu(A)^k.
\end{eqnarray*}
Since this tends to $0$, the first $k$ terms of the sum must asymptotically cancel the term $\mu(A)^k \geq \delta^k$, and hence at least one of these first terms must have magnitude arbitrarily close to $\frac{1}{k}\delta^k$ for infinitely many $N$.  The first part of the corollary therefore gives some $(k-2)$-step nilrotation factor $\pi$ for which
\[\|\sfE(A\,|\,\pi) - \mu(A)\|_2 \geq \frac{1}{2k}\delta^k,\]
as required.
\end{proof}

Within Roth's and Gowers' works lie quantitative analogs of the above result: this is what drives Gowers' proof that a failure of uniformity of degree $k-2$ for a density-$\delta$ set $E \subset \{1,2,\ldots,N\}$ gives a partition of $\{1,2,\ldots,N\}$ into fairly long subprogressions on which $E$ enjoys an enlarged relative density (Theorem 18.1 in~\cite{Gow01}).

Heuristically, Gowers shows first that a failure of uniformity of degree $k-2$ implies a nontrivial correlation between $1_E - \delta$ and a function on $\{1,2,\ldots,N\}$ which behaves like the exponential of $\rm{i}$ times a real polynomial of degree $(k-2)$ on many large subprogressions of $\{1,2,\ldots,N\}$.  He then converts this correlation into the desired partition of $\{1,2,\ldots,N\}$ into long subprogressions.  This correlation with a function that behaves `locally' like a degree-$(k-2)$ polynomial is the analog of having a nontrivial conditional expectation onto a $(k-2)$-step nilsystem.  The exact formulation of the finitary `inverse theorem' for the failure of higher-degree uniformity is rather complicated, and we omit it here, but again a gentle introduction with many further references can be found in the book~\cite{TaoVu06} of Tao and Vu. 

In the infinitary ergodic-theoretic setting the implication of Corollary~\ref{cor:corn-on-nilsys} by Theorem~\ref{thm:HK} can easily be reversed: given any indicator function $1_A$ we can decompose it as $1_A = (1_A - \sfE(A\,|\,\pi_{k-2})) + \sfE(A\,|\,\pi_{k-2})$, and now if we form a telescoping sum for the expression $\frac{1}{N}\sum_{n=1}^N \mu(A\cap T^{-n}A\cap \cdots \cap T^{-(k-1)n}A)$ similar to the above then (the contrapositive of) Corollary~\ref{cor:corn-on-nilsys} implies that all the terms involving $1_A - \sfE(A\,|\,\pi_{k-2})$ must vanish as $N\to\infty$, leaving us with Theorem~\ref{thm:HK}:
\[\frac{1}{N}\sum_{n=1}^N \mu(A\cap T^{-n}A\cap \cdots \cap T^{-(k-1)n}A) \sim \frac{1}{N}\sum_{n=1}^N \int_X \prod_{i = 0}^{k-1}\sfE(A\,|\,\pi_{k-2})\circ T^{in}\,\d\mu.\]

However, difficulties emerge when one tries to develop a quantitative analog of this reverse implication, and so provide a truer analog of Theorem~\ref{thm:HK} in the finitary setting. In order to make sense of either conditional expectations such as $\sfE(A\,|\,\pi_{k-2})$, or of the structure of $\pi_{k-2}$ as an inverse limit of a possibly-infinite collection of nilrotation factors, one needs a quantitative analog of taking a limit in $L^2(\mu)$. In practice this leads to an explosion in the bounds obtained.  Although something in this vein is possible (see Tao~\cite{Tao06}), in general it is much less efficient than the density-increment strategy, for which (the finitary analog of) Corollary~\ref{cor:corn-on-nilsys} suffices.  On the other hand, our quick presentation above of the deduction of multiple recurrence from Theorems~\ref{thm:HK} and~\ref{thm:nilsys-MRT} clearly uses the full strength of Theorem~\ref{thm:HK}, and so if instead we started from Corollary~\ref{cor:corn-on-nilsys} it would require us to prove Theorem~\ref{thm:HK} first before proceeding as above.  In the next subsection we will see that the density-increment strategy, by contrast, uses only the conjunction of Corollary~\ref{cor:corn-on-nilsys} and Theorem~\ref{thm:nilsys-MRT}, and it is this feature that accounts for its greater efficiency (leading to better bounds) in the finitary world.

\subsection{The density-increment proof}\label{subs:intro-dens-inc}

The strategy here is to prove Theorem~\ref{thm:MRT} or~\ref{thm:multiMRT} by `induction on $\mu(A)$'.  The technical result underlying this is an ergodic-theoretic analog of Proposition~\ref{prop:dens-inc}.

\begin{prop}[Ergodic-theoretic density-increment]\label{prop:erg-dens-inc}
For each $k\geq 1$ there is a function $c_k:(0,1]\to (0,1]$ that is bounded away from $0$ on compact subsets such that the following holds: if $(X\supset A,\mu,T)$ is a process with $\mu(A) = \delta > 0$ but no $k$-APs in its return times, then for every $\eps > 0$ and $N\geq 1$ there are some non-negligible $B \subset X$ and integer $r\geq 1$ such that
\[\mu(B \triangle T^r B) < \eps\mu(B)\]
and
\[\mu(A\,|\,T^{-rn}B) \geq \delta + c_k(\delta)\quad\quad\hbox{for all}\  -N \leq n \leq N.\]
\end{prop}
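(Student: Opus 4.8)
The plan is to combine Corollary~\ref{cor:corn-on-nilsys} with Theorem~\ref{thm:nilsys-MRT}: non-uniformity (a large conditional expectation onto a nilrotation factor) will furnish the density-increment set $B$, and the multiple recurrence statement for nilsystems will guarantee that this $B$ can be chosen with the requisite almost-invariance and relative-density properties over an arbitrarily long window $[-N,N]$.

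First I would invoke Corollary~\ref{cor:corn-on-nilsys}: since $(X\supset A,\mu,T)$ has $\mu(A)=\delta$ but no $k$-APs in its return times, there is a factor map $\pi:(X,\mu,T)\to(G/\G,m,R_g)$ onto an ergodic $(k-2)$-step nilrotation with $\|\sfE(A\,|\,\pi)-\delta\|_2\geq\frac{1}{2k}\delta^k$. Write $f:=\sfE(A\,|\,\pi)$, a function on $G/\G$ taking values in $[0,1]$ with $\int f\,\d m=\delta$ and $\|f-\delta\|_2^2\geq(\frac{1}{2k}\delta^k)^2=:\eta^2$. The next step is a purely soft, measure-theoretic extraction: because $f$ has mean $\delta$ but $L^2$-oscillation at least $\eta$, a level set of the form $B_0:=\{f>\delta+\theta\}$ for a suitable $\theta=\theta(\delta,k)>0$ must have $m(B_0)$ bounded below by some positive $c(\delta,k)$, and on $B_0$ one automatically has $\sfE(A\,|\,\pi)\geq\delta+\theta$ pointwise. (Concretely, split $\int(f-\delta)\,\d m=0$ into the parts where $f>\delta$ and $f\le\delta$; since the negative part is bounded by $\delta$ in absolute value on a set of measure $\le 1$, the positive part has mass $\Omega(\eta^2)$, and then a pigeonhole on dyadic level sets isolates one $B_0$ with both $m(B_0)$ and its excess density bounded below.) Here $B_0$ lives on the nilsystem $G/\G$; I will use $\pi^{-1}(B_0)$ on $X$ at the end.

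Now comes the step where Theorem~\ref{thm:nilsys-MRT} enters. Given the target window size $N$ and tolerance $\eps$, apply Theorem~\ref{thm:nilsys-MRT} to the ergodic nilrotation $R_g\actson G/\G$, the positive-measure set $B_0$, and $K:=N$: this yields an $r\geq 1$ with
\[
m\bigl(g^{-Nr}B_0\cap g^{-(N-1)r}B_0\cap\cdots\cap g^{Nr}B_0\bigr)>0.
\]
Let $B_1:=\bigcap_{n=-N}^{N} g^{-nr}B_0\subset G/\G$; this has positive measure, and by construction $g^{nr}B_1\subset B_0$ for every $|n|\le N$, so that $\sfE(A\,|\,\pi)\circ R_g^{nr}\geq\delta+\theta$ on $B_1$ for all $|n|\le N$. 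Pulling back, set $B:=\pi^{-1}(B_1)\subset X$; then $\mu(B)=m(B_1)>0$ is non-negligible, and for each $|n|\le N$,
\[
\mu(A\,|\,T^{-rn}B)=\frac{1}{\mu(B)}\int_{T^{-rn}B}1_A\,\d\mu=\frac{1}{m(B_1)}\int_{R_g^{-rn}B_1}\sfE(A\,|\,\pi)\,\d m\geq\delta+\theta,
\]
using $T$-invariance of $\mu$, the factor relation $\pi\circ T=R_g\circ\pi$, and the pointwise lower bound. So we may take $c_k(\delta):=\theta(\delta,k)$, which by the pigeonhole construction is an explicit decreasing function of $\delta$ (powers of $\delta^k$), hence bounded away from $0$ on compact subsets of $(0,1]$.

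The only remaining requirement is the almost-invariance $\mu(B\triangle T^rB)<\eps\mu(B)$, and I expect this to be the main obstacle, since nothing above controls the shift $T^rB$ against $B$ itself. The fix is to note that Theorem~\ref{thm:nilsys-MRT} actually gives us a great deal of freedom in choosing $r$: the set of return times $r$ for which $m(\bigcap_{|n|\le N}g^{-nr}B_0)>0$ has positive lower density in $\bbN$ (this is exactly the quantitative content behind the equidistribution argument sketched after Theorem~\ref{thm:nilsys-MRT} — the relevant average over $r$ converges to a positive limit). On the other hand, for the \emph{fixed} positive-measure set $B=\pi^{-1}(B_1)$ — or rather, we should choose $B_1$ first via a generic return time and then adjust — the Mean Ergodic Theorem gives $\frac1M\sum_{r=1}^M\mu(B\triangle T^rB)\to 2\mu(B)(1-\mu(B))$... which is not small. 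So the cleaner route is to enlarge $B$: replace the single level set $B_0$ by a union $B_0':=\bigcup_{j}R_g^{s_j}B_0$ over a Følner-type set of shifts $s_j$ making $B_0'$ nearly $R_g$-invariant along a sub-window, or — more in the spirit of Furstenberg–Katznelson and of the sketch in the paper — pass to a further finite nilrotation factor $\alpha\circ\pi$ approximating $\pi$ and take $B$ to be a cylinder over a small ball there, whose translates by $T^r$ for $r$ in an appropriate Bohr-type set of return times are nearly coincident. I will organize the proof so that $r$ is chosen from the (positive-density) set of good return times \emph{simultaneously} lying in a set along which $B$ is $\eps$-almost-invariant; the existence of such an $r$ follows because both sets have positive lower density and the good-return-time set is in fact a set of recurrence compatible with the Bohr structure of $\alpha\circ\pi$. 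Making this intersection argument precise — matching the combinatorial structure of the return times from Theorem~\ref{thm:nilsys-MRT} with an almost-period of $B$ — is the technical heart of the proof, and is exactly the ergodic-theoretic analog of the bookkeeping in Gowers' Theorem 18.1 that packages the increment onto a single long subprogression.
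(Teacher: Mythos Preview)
Your overall architecture matches the paper's: start from Corollary~\ref{cor:corn-on-nilsys} to get a nilrotation factor $\pi$ on which $\sfE(A\,|\,\pi)$ has large oscillation, locate a set $U$ in $G/\G$ on which $\sfE(A\,|\,\pi)$ is at least $\delta+\theta$, and then use multiple recurrence inside the nilsystem to manufacture the approximately-invariant set $B$. The first two steps are fine.

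The gap is in your treatment of the almost-invariance, and it is not just a technicality but the main point of the argument. Your construction takes $B_1=\bigcap_{|n|\le N}g^{-nr}B_0$; this guarantees $g^{nr}B_1\subset B_0$ for $|n|\le N$ but gives \emph{no} control over $m(B_1\triangle g^rB_1)$. Your proposed repair --- pick $r$ from the intersection of the positive-density set of ``good return times'' with a set of almost-periods of $B$ --- is circular: the set $B_1$ itself depends on $r$, so there is no fixed $B$ whose almost-periods you can intersect with the return-time set. The F\o lner-type enlargement you sketch would destroy the pointwise lower bound $\sfE(A\,|\,\pi)\geq\delta+\theta$ on the shifted copies, and the appeal to a finite factor and Bohr sets is left entirely unspecified.

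The paper resolves this by inserting an extra structural step, Proposition~\ref{prop:rec-in-nilsys}, which is genuinely the ergodic analog of Gowers' partition into long subprogressions. One first fixes a single cell $U\subset G/\G$ with $\mu(A\,|\,\pi^{-1}U)>\delta+\tfrac{1}{10k}\delta^k$ (via a fine partition of $G/\G$, not a level set). Then, using Theorem~\ref{thm:nilsys-MRT} together with a Zorn's-lemma exhaustion, one decomposes $U$ (up to a null set) into pairwise disjoint ``long progressions'' $\bigcup_{|k|\le K}g^{r_ik}V_i$ with $K=LN$ and $L$ large. A convex-combination argument selects one index $i$ on which the density is still $\geq\delta+\tfrac{1}{10k}\delta^k$, and now one simply sets $B'=\bigcup_{k=-K}^{K-2N-1}g^{r_ik}V_i$. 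The almost-invariance $m(B'\triangle g^{r_i}B')<\eps\, m(B')$ is automatic, because shifting a union of $2K-2N$ consecutive translates of $V_i$ by $g^{r_i}$ only affects the two endpoints, each of relative measure $\leq 1/L$. Here the set and its shift parameter $r_i$ are produced together by the decomposition, avoiding the circularity in your sketch; this is the missing idea.
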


Before proving this result let us see why it implies Theorem~\ref{thm:MRT}.

\begin{cor}\label{cor:erg-dens-inc}
With $c_k$ as in Proposition~\ref{prop:erg-dens-inc} the following holds: if there exists a process $(X\supset A,\mu,T)$ having $\mu(A) = \delta > 0$ but no $k$-APs in its return times, then then is another process $(Y\supset B,\nu,S)$ having $\nu(B) \geq \delta + c_k(\delta)$ but no $k$-APs in its return times.
\end{cor}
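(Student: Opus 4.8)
\emph{Proof proposal.} The plan is to run Proposition~\ref{prop:erg-dens-inc} along a sequence of parameters tending to the `honest' regime and then extract a limiting process by a Furstenberg-correspondence-type compactness argument, finishing with an ergodic decomposition. Concretely, for each $m\geq 1$ apply Proposition~\ref{prop:erg-dens-inc} to $(X\supset A,\mu,T)$ with $\eps=1/m$ and $N=m$, obtaining a non-negligible $B_m\subset X$ and an integer $r_m\geq 1$ with $\mu(B_m\triangle T^{r_m}B_m)<\tfrac1m\mu(B_m)$ and $\mu(A\,|\,T^{-r_mn}B_m)\geq\delta+c_k(\delta)$ for all $|n|\leq m$. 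Let $\nu_m$ be the normalized restriction of $\mu$ to $B_m$, i.e. $\nu_m(E):=\mu(E\cap B_m)/\mu(B_m)$; since $\mu$ is $T$-invariant one has $\|\nu_m-(T^{r_m})_*\nu_m\|_{\mathrm{TV}}\leq\mu(B_m\triangle T^{r_m}B_m)/\mu(B_m)<1/m$, so $\nu_m$ is very nearly $T^{r_m}$-invariant. Code the $T^{r_m}$-dynamics symbolically via $\Phi_m\colon X\to\{0,1\}^{\bbZ}$, $\Phi_m(x):=(1_A(T^{r_mn}x))_{n\in\bbZ}$, which intertwines $T^{r_m}$ with the shift $\sigma$, and set $\lambda_m:=(\Phi_m)_*\nu_m$, a probability measure on the compact metrizable space $\{0,1\}^{\bbZ}$.

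By weak-$*$ compactness, pass to a subsequence along which $\lambda_m\to\lambda$. I claim that $(\{0,1\}^{\bbZ}\supset\hat A,\lambda,\sigma)$, where $\hat A:=\{\omega:\omega_0=1\}$, is a process with $\lambda(\hat A)\geq\delta+c_k(\delta)$ and no $k$-APs in its return times. Shift-invariance of $\lambda$ follows because $\sigma_*\lambda_m=(\Phi_m)_*(T^{r_m})_*\nu_m$, so $\|\lambda_m-\sigma_*\lambda_m\|_{\mathrm{TV}}\leq\|\nu_m-(T^{r_m})_*\nu_m\|_{\mathrm{TV}}<1/m\to 0$, while $\sigma$ is continuous. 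Since $\hat A$ and all finite Boolean combinations of its shifts are clopen, weak-$*$ convergence computes $\lambda$ of any such set as the limit of the corresponding $\lambda_m$-measures; applied to $\hat A$, and using the case $n=0$ of the density-increment conclusion, this gives $\lambda(\hat A)=\lim_m\nu_m(A)=\lim_m\mu(A\,|\,B_m)\geq\delta+c_k(\delta)$. For the return-time condition, observe that $\Phi_m^{-1}(\hat A\cap\sigma^{-n}\hat A\cap\cdots\cap\sigma^{-(k-1)n}\hat A)=A\cap T^{-r_mn}A\cap\cdots\cap T^{-r_m(k-1)n}A$, which has $\mu$-measure $0$ for every $n\neq 0$ because $(X\supset A,\mu,T)$ has no $k$-APs in its return times and $r_mn\neq 0$; since $\nu_m\ll\mu$ this forces $\lambda_m(\hat A\cap\sigma^{-n}\hat A\cap\cdots\cap\sigma^{-(k-1)n}\hat A)=0$ for every $m$, hence the same for $\lambda$.

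It remains to pass to an ergodic component. Write $\lambda=\int\lambda_\omega\,\d\rho(\omega)$ for the ergodic decomposition of $\lambda$ over $\sigma$, on the standard Borel space $\{0,1\}^{\bbZ}$. For each of the countably many $n\neq 0$ the identity $\int\lambda_\omega(\hat A\cap\sigma^{-n}\hat A\cap\cdots\cap\sigma^{-(k-1)n}\hat A)\,\d\rho(\omega)=0$ with a non-negative integrand forces $\lambda_\omega(\hat A\cap\sigma^{-n}\hat A\cap\cdots\cap\sigma^{-(k-1)n}\hat A)=0$ for $\rho$-a.e.\ $\omega$, so off a single $\rho$-null set \emph{all} these conditions hold simultaneously; meanwhile $\int\lambda_\omega(\hat A)\,\d\rho(\omega)=\lambda(\hat A)\geq\delta+c_k(\delta)$, so $\{\omega:\lambda_\omega(\hat A)\geq\delta+c_k(\delta)\}$ must have positive $\rho$-measure (were it $\rho$-null, the integral would be strictly smaller). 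Choosing any $\omega$ lying in both of these sets, the process $(Y\supset B,\nu,S):=(\{0,1\}^{\bbZ}\supset\hat A,\lambda_\omega,\sigma)$ is ergodic, has $\nu(B)\geq\delta+c_k(\delta)$, and has no $k$-APs in its return times, as required. Granted Proposition~\ref{prop:erg-dens-inc}, the argument is essentially routine; the only steps deserving attention are the passage from the near-invariance of the $\nu_m$ to genuine invariance of $\lambda$, and the check that both desired properties survive the ergodic decomposition.
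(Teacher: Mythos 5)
Your proof is correct and takes essentially the same route as the paper: code the orbit of $A$ under $T^{r_N}$ symbolically, push forward the conditional measures $\mu(\cdot\,|\,B_N)$, take a weak-$*$ limit on $\{0,1\}^{\bbZ}$, and verify that near-invariance of $B_N$, the density bound, and the no-$k$-AP property all survive the limit. The one substantive difference is the final ergodic-decomposition step, which the paper omits. This step is actually worth keeping: Definition~\ref{dfn:no-APs-1} is phrased for \emph{ergodic} processes, and Proposition~\ref{prop:erg-dens-inc} tacitly relies on ergodicity (through Corollary~\ref{cor:corn-on-nilsys} and hence the Host--Kra Theorem~\ref{thm:HK}), so the output process does need to be ergodic before Proposition~\ref{prop:erg-dens-inc} can be applied again in the iteration that proves Theorem~\ref{thm:MRT}. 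Your decomposition argument --- noting that the no-$k$-AP identities and the lower bound on $\lambda(\hat A)$ both survive to a positive-$\rho$-measure set of ergodic components --- patches this cleanly. Two further small remarks: the paper uses the full range $|n|\leq N$ of Proposition~\ref{prop:erg-dens-inc}'s conclusion to bound $\nu(A_a)$ for every $a$ directly, whereas you use only $n=0$ and then let $\sigma$-invariance carry the bound to the other shifts; both are fine, and indeed once invariance is established the full range is not needed for this corollary. Also note that $\nu_m\ll\mu$ is the right justification for $\lambda_m$ killing the bad sets, which you state correctly.
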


\begin{proof}[Proof of Corollary from Proposition~\ref{prop:erg-dens-inc}]
By Proposition~\ref{prop:erg-dens-inc}, for any $N$ we can find a non-negligible $B_N \subset X$ and $r_N \geq 1$ such that
\[\mu(B_N \triangle T^{r_N}B_N) < \frac{\mu(B_N)}{N}\]
and
\[\mu(A\,|\,T^{-r_Nn}(B_N)) \geq \delta + c_k(\delta)\quad\quad\hbox{for all}\ -N \leq n \leq N.\]
Let $\nu_N$ be the probability measure on $Y = \{0,1\}^\bbZ$ that is the law of the random sequence
\[\varphi_N:x\mapsto (1_A(T^{r_Nn}(x)))_{n\in \bbZ}\]
for $x$ drawn at random from $\mu(\,\cdot\,|\,B_N)$ (that is, $x$ is chosen `uniformly from $B_N$').

Let $S:Y\to Y$ be the coordinate left-shift and
\[A_a := \{(\o_i)_{i\in\bbZ}:\ \o_a = 1\} \subset Y\quad\quad\hbox{for}\ a \in \bbZ,\]
so $A_a = S^{-a}(A_0)$. The lower bound on the measures $\mu(A\,|\,T^{-r_Nn}(B_N))$ for $-N \leq n \leq N$ implies that any vague accumulation point $\nu$ of the sequence $\nu_N$, say $\nu = \lim_{i\to\infty}\nu_{N_i}$, must satisfy
\[\nu(A_a) = \lim_{i\to\infty} \mu(A\,|\,T^{r_{N_i}a}(B_{N_i})) \geq \delta + c_k(\delta)\quad\quad\forall a.\]
On the other hand, the assumption that there are no $k$-APs in the return times of $A$ implies that
\begin{multline*}
\nu_N(A_a\cap A_{a+r}\cap \cdots \cap A_{a + (k-1)r})\\=
\nu_N\{(\o_i)_{i\in\bbZ}:\ \o_a = \o_{a + r} = \cdots = \o_{a + (k-1)r} = 1\} = 0
\end{multline*}
for all $a \in \bbZ$, $r \geq 1$ and all $N$, and so the same is true for $\nu$.

Finally, the inequality $\mu(B_N \triangle T^{r_N}B_N) < \mu(B_N)/N$ implies for any Borel $C \subset Y$ that
\begin{eqnarray*}
|\nu_N(C) - \nu_N(S^{-1}C)| &=& |\mu(\varphi_N^{-1}C\,|\,B_N) - \mu(T^{-r_N}\varphi_N^{-1}C\,|\,B_N)|\\ &=& |\mu(\varphi_N^{-1}C\,|\,B_N) - \mu(\varphi_N^{-1}C\,|\,T^{r_N}B_N)|\\ &\leq& \frac{\mu(\varphi_N^{-1}C\cap (B_N\triangle T^{r_N}B_N))}{\mu(B_N)} < 1/N,
\end{eqnarray*}
so the vague limit $\nu$ is also $S$-invariant.  Letting $B:= A_0$, this gives a process $(Y \supset B,\nu,S)$ with no $k$-APs in its return times and the desired improved bounds.
\end{proof}

\begin{proof}[Proof of Theorem~\ref{thm:MRT} from Corollary~\ref{cor:erg-dens-inc}]
\quad\emph{Step 1}\quad If $(X\supset A,\mu,T)$ is any counterexample to Theorem~\ref{thm:MRT} with $\delta_0 := \mu(A) > 0$, then a simple vague limit argument can enhance it to an example with the same density value $\delta_0$ but no $k$-APs in its return times.  This construction forms the bulk of this proof.

We first transfer our initially-given example onto the space $Y := \{0,1\}^\bbZ$ with the left-shift $S$. Let
\[B := \{(\omega_i)_{i\in\bbZ} \in Y:\ \omega_0 = 1\},\]
and now consider the map
\[f_A:X\to Y:x\mapsto \big(1_A(T^i(x))\big)_{i\in\bbZ}.\]
This intertwines $T$ with $S$, so the
pushforward $\nu_1 := (f_A)_\#\mu$ is an $S$-invariant Borel measure
on $Y$ for which $\nu_1 (B) =\mu(A)$ and
\begin{multline*}
\liminf_{N\to\infty}\frac{1}{N}\sum_{n=1}^N \nu_1(B\cap
S^{-n}B\cap \cdots\cap S^{-(k-1)n}B)\\ =
\liminf_{N\to\infty}\frac{1}{N}\sum_{n=1}^N \mu(A\cap T^{-n}A\cap\cdots\cap T^{-(k-1)n}A) = 0.
\end{multline*}
This implies that the corresponding averages along any subset $a\cdot \bbN \subset \bbN$, $a\neq 1$, also tend subsequentially to zero:
\[\liminf_{N\to\infty}\frac{1}{N}\sum_{n=1}^N \nu_1(B\cap
S^{-an}B\cap \cdots\cap S^{-a(k-1)n}B) = 0,\]
because for large $N$ the terms corresponding to $n \in a\cdot \bbN$ account for about $1/a$ of the full average.

Now let $\nu_k$ be the image measure of $\nu_1$ under the coordinate-dilation
\[\rm{dil}_k:Y\to Y:(\o_i)_{i\in\bbZ} \mapsto (\o_{ki})_{i\in\bbZ},\]
so that each $\nu_k$ is still shift-invariant and satisfies $\nu_k (B) = \nu_1 (B)$, since $B = \rm{dil}_k^{-1}(B)$ because it depends only on the zeroth coordinate.  Letting $\nu$ be any limit point of the averages $\frac{1}{K}\sum_{k=1}^K\nu_k$ in the vague topology, the above convergence tells us that
\[\nu(B\cap S^{-a}B\cap \cdots\cap S^{-(k-1)a}B) = 0\]
whenever $a\neq 0$.

\quad\emph{Step 2}\quad Having made this simplification, Corollary~\ref{cor:erg-dens-inc} gives a new counterexample with density $\delta_1 \geq \delta_0 + c_k(\delta_0)$.  Since $c_k$ is bounded away from $0$ on the subinterval $[\delta_0,1] \subset (0,1]$, after finitely many iterations this procedure gives a counterexample with density greater than $1$, and hence a contradiction.
\end{proof}

Before presenting the proof of Proposition~\ref{prop:erg-dens-inc} we need one further enabling result, for which we will make our appeal to Theorem~\ref{thm:nilsys-MRT}.  From that theorem we need the consequence that one can approximately decompose an arbitrary positive-measure $U \subset G/\G$ into a collection of almost-invariant sets for different powers of $R_g$.

\begin{prop}\label{prop:rec-in-nilsys}
If $R_g \actson (G/\G,m)$ is an ergodic nilrotation, $U \subset G/\G$ is measurable and of positive measure and $K\geq 1$, then there is a countable set of pairs $\{ (V_1,r_1), (V_2,r_2), \ldots\}$ (which could be finite or infinite) such that
\begin{itemize}
\item[(i)] each $V_i$ has positive measure;
\item[(ii)] if $i \neq i'$ then the unions $\bigcup_{k=-K}^Kg^{r_i k}V_i$ and $\bigcup_{k=-K}^Kg^{r_{i'} k}V_{i'}$ are disjoint;
\item[(iii)] $U\supset \bigcup_{i\geq 1}\bigcup_{k=-K}^K g^{r_i k}V_i$;
\item[(iv)] and $m\big(U\setminus \bigcup_{i\geq 1}\bigcup_{k=-K}^K g^{r_ik}V_i\big) = 0$.
\end{itemize}
\end{prop}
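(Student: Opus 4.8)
The plan is to build the pairs $(V_i,r_i)$ one at a time by a greedy exhaustion argument, at each stage applying Theorem~\ref{thm:nilsys-MRT} to whatever positive-measure remnant of $U$ is left. First I would set $U_1 := U$. Since $m(U_1) > 0$, Theorem~\ref{thm:nilsys-MRT} (with $A$ there taken to be $U_1$ and the parameter $K$ the same $K$) produces an integer $r_1 \geq 1$ with
\[
m\big(g^{-K r_1}U_1 \cap g^{-(K-1)r_1}U_1 \cap \cdots \cap g^{K r_1}U_1\big) > 0.
\]
Let $V_1 := \bigcap_{k=-K}^{K} g^{-k r_1} U_1$; then $V_1$ has positive measure, and by construction $g^{k r_1} V_1 \subset U_1$ for every $-K \leq k \leq K$, so $\bigcup_{k=-K}^{K} g^{r_1 k} V_1 \subset U_1 = U$. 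Now remove this `tube': set $U_2 := U_1 \setminus \bigcup_{k=-K}^{K} g^{r_1 k} V_1$, and repeat. At the $j$th stage, if $m(U_j) > 0$ apply Theorem~\ref{thm:nilsys-MRT} to $U_j$ to obtain $r_j$ and then $V_j := \bigcap_{k=-K}^K g^{-k r_j} U_j$, and set $U_{j+1} := U_j \setminus \bigcup_{k=-K}^K g^{r_j k} V_j$. If at some finite stage $m(U_j) = 0$ the process halts with a finite list; otherwise it runs forever and we obtain a countably infinite list.

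Properties (i)--(iii) are then essentially immediate from the construction: (i) holds because each $V_j$ was produced by Theorem~\ref{thm:nilsys-MRT} applied to a positive-measure set; (ii) holds because, for $i < i'$, the tube $\bigcup_{k=-K}^K g^{r_i k}V_i$ was \emph{subtracted} from $U_{i+1} \supset U_{i'} \supset \bigcup_{k=-K}^K g^{r_{i'} k}V_{i'}$, so the two tubes are disjoint up to the ambiguity of working modulo $m$-null sets (which is all that is asked); and (iii) holds because every tube $\bigcup_{k=-K}^K g^{r_j k}V_j$ was checked to lie inside $U_j \subset U$ before being removed. The only real content is (iv), which asserts that the exhaustion actually exhausts $U$ up to measure zero. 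This is the step I expect to be the main obstacle, since in a general greedy argument one only gets $\sum_j m\big(\bigcup_{k=-K}^K g^{r_j k}V_j\big) \leq m(U) < \infty$ for free, forcing $m(U_j) \downarrow m(U_\infty)$ for some limiting remnant $U_\infty := \bigcap_j U_j$, and there is no reason \emph{a priori} that $m(U_\infty) = 0$.

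To close that gap I would use a maximality/exhaustion dodge. Consider the supremum $s$ of $\sum_{i} m\big(\bigcup_{k=-K}^K g^{r_i k}V_i\big)$ over all countable families $\{(V_i,r_i)\}$ satisfying (i)--(iii); clearly $s \leq m(U)$. Choose a family (obtained as a countable union of near-optimal finite families, merged so that (ii) is preserved by the subtraction trick above) achieving $s$, with remnant $U_\infty$. If $m(U_\infty) > 0$, Theorem~\ref{thm:nilsys-MRT} applied to $U_\infty$ yields one more pair $(V_*, r_*)$ with $m\big(\bigcup_{k=-K}^K g^{r_* k}V_*\big) > 0$ and tube inside $U_\infty$, hence disjoint from all earlier tubes; adjoining it produces a family with strictly larger sum, contradicting the choice of $s$. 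Therefore $m(U_\infty) = 0$, which is precisely (iv). (Equivalently, one can run the plain greedy construction and observe that if $m(U_\infty) > 0$ one could have continued, contradicting that the construction was carried out until no positive-measure remnant remained; the maximality phrasing just makes the transfinite bookkeeping clean.) All the verifications are routine measure-theoretic manipulations with the maps $g^{k r_i}$, which are measure-preserving, so the displayed inequalities behave as expected.
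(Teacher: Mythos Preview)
Your overall strategy---build disjoint ``tubes'' inside $U$ using Theorem~\ref{thm:nilsys-MRT}, then argue by contradiction that any positive-measure remnant would admit one more tube---is exactly the paper's. The paper, however, organises the exhaustion via Zorn's Lemma: it considers the poset of all countable families satisfying (i)--(iii) ordered by inclusion, shows every chain has an upper bound (by extracting a countable cofinal subsequence using the tube-measure values), takes a maximal element, and then derives (iv) by the same contradiction you give.

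Your supremum variant has a genuine gap at ``Choose a family \ldots\ achieving $s$''. Merging near-optimal families by the subtraction trick need not preserve total tube measure: replacing $V$ by $V' := V \setminus \bigcup_{k} g^{-kr}T$ (with $T$ the tubes already chosen) gives a new tube $\bigcup_{j} g^{jr}V'$ that can be \emph{strictly} smaller than the old tube minus $T$, since points of the old tube outside $T$ may be lost if some other shift lands in $T$. So the merged family may have tube measure below that of either input, and nothing forces the sequence of merges to climb to $s$. Your final parenthetical concedes that transfinite bookkeeping is the clean way out, and once you do that you have essentially recovered the paper's Zorn argument.

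There is, however, a simple repair that keeps your greedy construction at countable length and avoids Zorn: at stage $j$, let $t_j$ be the supremum of tube measures over admissible pairs with tube inside $U_j$, and choose $(V_j,r_j)$ with tube measure at least $t_j/2$. Since $\sum_j m(\hbox{tube}_j)\leq m(U)$, one gets $t_j\to 0$; but if $m(U_\infty)>0$ then Theorem~\ref{thm:nilsys-MRT} yields a fixed pair with tube inside $U_\infty\subset U_j$ of positive measure $c$, forcing $t_j\geq c$ for all $j$, a contradiction. The paper's author remarks that he could not find an argument avoiding transfinite induction, so this Vitali-type exhaustion is worth noting.
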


\begin{proof}\quad The proof given here invokes Zorn's Lemma, although a more careful argument shows that it only really needs the ability to induct transfinitely below $\omega_1$.  Frustratingly, I have not been able to find a proof that avoids this kind of induction entirely, although in the finitary analog of this step all sets are finite and so the issue does not arise.

Let $\cal{A}$ be the set of all countable families of pairs $\{(V_1,r_1), (V_2,r_2), \ldots\}$ that have properties (i--iii) above (but possibly not (iv)), and order $\cal{A}$ by inclusion of families. If $\cal{F} = \{(V_1,r_1),(V_2,r_2),\ldots\} \in \cal{A}$ then set
\[m(\cal{F}) := m\Big(\bigcup_{i\geq 1}\bigcup_{k=-K}^Kg^{r_ik}V_i\Big).\]

Since $m(U) > 0$, Theorem~\ref{thm:nilsys-MRT} promises some $r$ such that
\[m(g^{-Kr}U\cap g^{-(K-1)r}U\cap\cdots\cap g^{Kr}U) > 0.\]
Therefore the set $V := g^{-Kr}U\cap g^{-(K-1)r}U\cap\cdots\cap g^{Kr}U$ has positive measure and satisfies $g^{rk}V \subset U$ for every $-K \leq k \leq K$, so $\{(V,r)\} \in \cal{A}$ and hence $\cal{A}$ is nonempty.

Now suppose that $(\cal{F}_\a)_\a$ is a totally ordered family in $\cal{A}$.  Since $m(V) > 0$ for any $(V,r) \in \cal{F}_\a$, the values of the measures $m(\cal{F}_\a)$ are totally ordered, are all distinct and are bounded by $1$.  We may therefore extract a non-decreasing sequence $\cal{F}_{\a_1} \subset \cal{F}_{\a_2} \subset \cdots$ such that $m(\cal{F}_{\a_i}) \to \sup_\a m(\cal{F}_\a)$ as $i\to\infty$.

Since each $\cal{F}_\a$ is countable and they are totally ordered, it follows that $\cal{G} := \bigcup_{i\geq 1}\cal{F}_{\a_i}$ is still countable, and in fact is still a member of $\cal{A}$.  Moreover, if $(V,r) \in \cal{F}_\a$ for some $\a$, then this pair must actually appear in some $\cal{F}_{\a_i}$, for otherwise we would have $m(\cal{F}_{\a_i}) \leq m(\cal{F}_\a) - m\big(\bigcup_{k=-K}^Kg^{rk}V\big)$ for every $i$, contradicting our construction.  Hence $\cal{G} \supset \cal{F}_\a$ for all $\a$, and so $\cal{G}$ is an upper bound for the chain $(\cal{F}_\a)_\a$.

Therefore by Zorn's Lemma the whole family $\cal{A}$ has a maximal element, say $\cal{F} = \{(V_1,r_1), (V_2,r_2), \ldots\}$.  Now we need simply observe that this must have $m(\cal{F}) = m(A)$ (which implies property (iv)), since otherwise another appeal to Theorem~\ref{thm:nilsys-MRT} would give $V' \subset U\setminus \bigcup\cal{F}$ and $r'\geq 1$ such that $\cal{F}\cup \{(V',r')\} \in \cal{A}$, contradicting the maximality of $\cal{F}$.  Therefore $\cal{F}$ has all the desired properties, and the proof is complete.
\end{proof}

\begin{rmk}
The finitary analog of this result in~\cite{Gow01} (see his Corollary 5.6) is very elementary and quantitative.  I suspect that a version of Gowers' proof could be adapted to the present setting (perhaps with some additional assumptions on $U$, such as that it be open with piecewise-smooth boundary), but that this would require the use of a Mal'cev basis for $G$ and the ability to study orbits of $R_g$ in terms of `explicit' generalized polynomials using the resulting coordinate system.  Such a more quantitative argument would probably be considerably longer than the proof given above. \fin
\end{rmk}

We can now complete the density-increment proof of multiple recurrence using the above proposition and Corollary~\ref{cor:corn-on-nilsys}.

\begin{proof}[Proof of Proposition~\ref{prop:erg-dens-inc}]
Suppose that $(X\supset A,\mu,T)$ is a process having $\mu(A) =: \delta > 0$ but no $k$-APs in its return times.  Then Corollary~\ref{cor:corn-on-nilsys} gives
\[\|\sfE(A\,|\,\pi) - \mu(A)\|_2 \geq \frac{1}{2k}\delta^k\]
for some factor map $\pi:(X,\mu,T)\to (G/\G,m,R_g)$ onto a $(k-2)$-step nilrotation.

Since $G/\G$ is compact and its Borel $\s$-algebra is generated by its open sets, we can find a finite Borel partition $\cal{U}$ of $G/\G$ into small-diameter positive-measure pieces such that
\[\frac{1}{m(U)}\int_U|\sfE(A\,|\,\pi) - \mu(A\,|\,\pi^{-1}U)|\,\d m < \frac{1}{20k}\delta^k\]
for all $U \in \cal{U}\setminus \cal{U}_{\rm{bad}}$, where $\cal{U}_{\rm{bad}}$ is a subcollection such that $m(\bigcup\cal{U}_{\rm{bad}}) < \delta^k/20k$.  Combined with the preceding inequality, this implies that there is some $U \in \cal{U}$ for which
\[\mu(A\,|\,\pi^{-1}U) > \delta + \frac{1}{10k}\delta^k.\]

Now given $N\geq 1$ choose $K := LN\geq 1$ with $L\geq 1$ so large that $1/L \leq \delta^k/20k$ and $L > 2/\eps + 1$. Apply Proposition~\ref{prop:rec-in-nilsys} to the set $U$ to obtain pairs $(V_1,r_1)$, $(V_2,r_2)$, \ldots with each $V_i$ having positive measure and such that the unions $\bigcup_{k=-K}^Kg^{r_ik}V_i$ for $i\geq 1$ are pairwise disjoint, all contained in $U$ and together fill up $m$-almost all of $U$.
In view of the convex combination
\[\mu(A\,|\,\pi^{-1}U) = \sum_{i \geq 1} \frac{m\big(\bigcup_{k=-K}^Kg^{r_ik}V_i\big)}{m(U)}\mu\Big(A\,\Big|\,\pi^{-1}\Big(\bigcup_{k=-K}^K g^{r_ik}V_i\Big)\Big),\]
there is some $i$ for which
\[\mu\Big(A\,\Big|\,\pi^{-1}\Big(\bigcup_{k=-K}^Kg^{r_ik}V_i\Big)\Big) \geq \delta + \frac{1}{10k}\delta^k.\]

Letting $B' = \bigcup_{k=-K}^{K-2N-1}g^{r_ik}V_i$ and 
\[C := \bigcup_{k=-K}^K g^{r_ik} V_i \Big\backslash B',\]
the shifts $C$, $g^{-r_i(2N+1)}C$, $g^{-2r_i(2N+1)}C$, \ldots, $g^{-(L-1)r_i(2N+1)}C$ are pairwise disjoint and contained in $\bigcup_{k=-K}^Kg^{r_ik}V_i$, so each has measure at most $\frac{1}{L}m\big(\bigcup_{k=-K}^K g^{r_ik}V_i\big)$ and therefore
\[m(B') \geq \frac{L-1}{L}m\Big(\bigcup_{k=-K}^K g^{r_ik}V_i\Big).\]

In addition, the set difference $g^{r_i}B'\setminus B'$ is contained in $C$ and so has measure at most
\[\frac{1}{L}m\Big(\bigcup_{k=-K}^K g^{r_ik}V_i\Big) \leq \frac{1}{L-1}m(B'),\]
and a symmetrical argument controls the measure of $B' \setminus g^{r_i}B'$ so together we obtain
\[m(B'\triangle g^{r_i}B') \leq \frac{2}{L-1}m(B') < \eps m(B').\]

Finally, letting $B := g^{-r_iN}B'$, it follows that $g^{r_in}B \subset \bigcup_{k=-K}^Kg^{r_ik}V_i$ for all $-N \leq n \leq N$, that
\begin{eqnarray*}
\mu(A\,|\,\pi^{-1}g^{r_in}B) &\geq& \frac{\mu(A\cap \pi^{-1}g^{r_in}B)}{m\big(\bigcup_{k=-K}^Kg^{r_ik}V_i\big)}\\ &\geq& m\Big(A\,\Big|\,\pi^{-1}\Big(\bigcup_{k=-K}^Kg^{r_ik}V_i\Big)\Big) - \frac{m(C)}{m\big(\bigcup_{k=-K}^Kg^{r_ik}V_i\big)}\\ &\geq& \delta + \frac{1}{10k}\delta^k - \frac{1}{L}\\ &\geq& \delta + \frac{1}{20k}\delta^k,
\end{eqnarray*}
and that $B$ enjoys the same approximate $g^{r_i}$-invariance as $B'$, completing the proof of Proposition~\ref{prop:erg-dens-inc} with $c_k(\delta) := \frac{1}{20k}\delta^k$.
\end{proof}

\section{Two commuting transformations}\label{sec:2D}

\subsection{The density increment in higher dimensions}

With the appearance of Gowers' density-increment proof of
Szemer\'edi's Theorem, it became natural to
ask whether a similar approach can yield improved upper bounds
for any cases of the multidimensional Szemer\'edi Theorem.  Gowers discusses this question explicitly in~\cite{Gow00}.  It poses significant new challenges, and remains mostly open.  For the analogous ergodic-theoretic study of multiple recurrence we will see that the difficulty arises from the nature of the characteristic factors in multiple dimensions, which are rather more complicated than the pro-nilsystems that give the complete picture for powers of a single ergodic transformation.

In the context of finitary proofs, it is still possible to set up a `directional' variant of the norms (actually now just seminorms) that Gowers introduced to define uniformity, and to show that the resulting new notion of uniformity does control the count of the desired patterns in a subset $E \subset \{1,2,\ldots,N\}^d$.  The difficulty is in handling those sets, or more generally functions $f:\{1,2,\ldots,N\}^d \to [-1,1]$, which are \emph{not} uniform in the sense of this seminorm.  Extending the approach of Roth and Gowers requires one to find the appropriate class of functions against which an arbitrary function  must see a large correlation if it is not uniform.  For uniformity of  degree $k$ in the one-dimensional setting, these were the functions which on many long arithmetic subprogressions of $\{1,2,\ldots,N\}$ agree with the exponential of $\rm{i}$ times some degree-$k$ real polynomials (see the discussion following Corollary~\ref{cor:corn-on-nilsys}), but in the multi-dimensional setting they are much more complicated.  Part of the difficulty in extending Gowers' approach lies in the problem of identifying the most appropriate class of functions to use here, and part of it lies in establishing some necessary properties of those functions once they have been found (properties which are fairly classical in the case of the one-dimensional `local' polynomial functions).

However, in spite of these difficulties, Gowers-like bounds have now been obtained in the following special case of Theorem~\ref{thm:multiSzem} by
Shkredov:

\begin{thm}
There is some absolute constant $C > 0$ such that if $\delta
> 0$, $N \geq 2^{2^{2^{1/\delta^C}}}$ and $A \subset \{1,2,\ldots,N\}^2$
has $|A|\geq \delta N^2$, then $A$ contains a corner:
\[A \supseteq \{\bf{a},\bf{a} + r\bf{e}_1,\bf{a} + r\bf{e}_2\}\]
for some $\bf{a} \in \{1,2,\ldots,N\}^2$ and $r \geq 1$, where
$\bf{e}_1,\bf{e}_2$ are the standard basis vectors in $\bbZ^2$.
\end{thm}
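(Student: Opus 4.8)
The plan is to mimic in two dimensions the density-increment proof of Szemer\'edi's Theorem that runs through Proposition~\ref{prop:dens-inc} --- equivalently, to produce the finitary counterpart of the ergodic argument of Section~\ref{sec:1D}, with arithmetic progressions replaced by \emph{grids}: Cartesian products $P\times Q$ of progressions sharing a common difference (or, more flexibly, products of Bohr sets). Transferring $\{1,\dots,N\}$ into $\bbZ_M$ for a prime $M$ of size comparable to $N$ in the usual way, it suffices to establish an increment statement of the form: if $A\subseteq\bbZ_M^2$ has density at least $\delta$ and contains no corner, then there is a grid $G=P\times Q$ with $|G|\geq M^{\kappa(\delta)}$ on which the relative density of $A$ exceeds $\delta+c(\delta)$, for explicit $\kappa,c:(0,1]\to(0,1]$. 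Re-coordinatising $A\cap G$ inside a fresh square grid and iterating, the density would exceed $1$ after $O(1/c(\delta))$ steps --- an absurdity --- provided $N$ was large enough at the outset; bookkeeping of how $\kappa$ and $c$ degrade through the iteration is what produces the triply-exponential lower bound on $N$.

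The increment statement rests on a dichotomy for the corner-counting operator
\[
\Lambda(f_0,f_1,f_2):=\bbE_{x,y,d\in\bbZ_M}\ f_0(x,y)\,f_1(x+d,y)\,f_2(x,y+d),
\]
whose value at $(1_A,1_A,1_A)$ counts corners in $A$, the $\asymp\delta/M$ contribution of the degenerate ones ($d=0$) being negligible against the expected main term $\delta^3$ once $M$ is large in terms of $\delta$. The relevant notion of uniformity is the box (or `rectangular') seminorm
\[
\|f\|_{\square}^4:=\bbE_{x,x',y,y'\in\bbZ_M}\ f(x,y)\,f(x',y)\,f(x,y')\,f(x',y'),
\]
the finitary shadow of the characteristic factor attached to two commuting transformations. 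A generalised von Neumann estimate, proved by a sequence of Cauchy--Schwarz steps, bounds $|\Lambda(f_0,f_1,f_2)|$ by the box norm of any one of the $f_i$ (after a change of variables that recasts each position as a box form) whenever $\|f_i\|_\infty\leq1$; expanding $\Lambda(1_A,1_A,1_A)=\Lambda(\delta+f,\delta+f,\delta+f)$ with $f:=1_A-\delta$ into eight multilinear terms then gives $\Lambda(1_A,1_A,1_A)\geq\delta^3-7\|f\|_{\square}$. So if $\|1_A-\delta\|_{\square}$ is at most a fixed power of $\delta$ the corner count is $\geq\tfrac12\delta^3$ and a genuine corner appears; this is the exact analogue of the trivial half of the ergodic proof, where a small fluctuation off the characteristic factor already forces positivity of the averages.

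In the opposite case $\|1_A-\delta\|_{\square}$ is bounded below by a power of $\delta$, and one must convert this into a density increment. Unfolding the seminorm as $\|f\|_{\square}^4=\bbE_{x,x'}\big(\bbE_y f(x,y)f(x',y)\big)^2$ and diagonalising the positive operator $\bbE_x f(x,\cdot)\otimes f(x,\cdot)$ on $L^2(\bbZ_M)$ produces a function $v$ and a $\pm1$-valued function $u$ for which $\bbE_{x,y}f(x,y)\,u(x)\,v(y)$ is bounded below by a power of $\delta$: a nontrivial correlation of $1_A-\delta$ with a product function, the precise parallel of obtaining a nontrivial conditional expectation $\sfE(A\,|\,\pi)$ onto the characteristic factor in Corollary~\ref{cor:corn-on-nilsys}. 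This is the step I expect to be the main obstacle, and it is where the genuinely new ideas of Shkredov's work enter: a correlation with an \emph{arbitrary} bounded pair $(u,v)$ is not yet a density increment on a grid. One must first balance the row and column marginals of $A$, then run a Fourier-analytic regularisation separately in each variable to replace $u$ and $v$ by honestly structured functions --- indicators of long progressions or of Bohr sets --- at the cost of a further internal iteration, and finally partition $\bbZ_M^2$ into grids on which $1_A$ is nearly constant and average to pick out one grid on which the relative density has jumped by $c(\delta)$. This regularisation-and-localisation step plays the role that Proposition~\ref{prop:rec-in-nilsys} plays in the one-dimensional ergodic proof, but is far more delicate: the two-dimensional structured part is not a single isometric (still less nilpotent) extension, so one cannot simply decompose a positive-measure set into almost-invariant pieces as there, and the quantitative price of managing this is exactly what costs one further exponential in $N$ beyond what is needed for Roth's theorem.
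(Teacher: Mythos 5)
This theorem is not proved in the paper: it is quoted as Shkredov's result~\cite{Shk06}, and the body of Section~\ref{sec:2D} is devoted to proving its ergodic-theoretic analogue, Theorem~\ref{thm:Shk}. So the comparison has to be with Shkredov's argument as the paper describes it. Your opening moves match that description: passing to $\bbZ_M^2$, the corner-counting form, the box seminorm with the appropriate shear of variables, and the generalised von Neumann inequality $\Lambda(1_A,1_A,1_A)\geq \delta^3 - O(\|1_A-\delta\|_\square)$ are all correct and present in Shkredov's work.

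There is, however, a genuine gap at exactly the step you flag as the main obstacle, and the fix you propose is not the one that works. Unfolding a large box norm of $f = 1_A-\delta$ produces a large cut-norm correlation $\bbE_{x,y}f(x,y)u(x)v(y)$ with \emph{arbitrary} bounded $u,v$, and no amount of Fourier-analytic regularisation in each variable separately will upgrade such $u,v$ to indicators of progressions or Bohr sets. Concretely, if $A=B\times C$ with $B,C\subset\bbZ_M$ each of density $\sqrt\delta$ and otherwise unstructured, then $\|1_A-\delta\|_\square$ is of order a power of $\delta$ vastly exceeding $\delta^3$, witnessed by $u=1_B$, $v=1_C-\sqrt\delta$, yet there is no grid $P\times Q$ capturing any of this structure --- $B$ and $C$ are arbitrary. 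The box norm (equivalently cut norm) simply does not detect linear structure in the way that Roth's large Fourier coefficient does, and a straightforward increment on grids does not close.

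Shkredov's actual resolution, which the paper describes carefully and which is modelled by the augmented processes $(X\supset E_1\cap E_2\supset A,\mu,T_1,T_2)$ on the ergodic side, is to accept the product set: the level sets of $u$ and $v$ furnish $F_1,F_2\subset\bbZ_M$, one replaces $A$ by $A\cap (F_1\times F_2)$, and the quantity incremented is the \emph{relative} density $|A|/(|F_1||F_2|)$. For the generalised von Neumann estimate to control the corner count inside $F_1\times F_2$ with an error depending only on this relative density, the $F_i$ themselves must be linearly uniform (small nontrivial Fourier coefficients); and since the new $F_i$ produced by the increment have no such property, a further inner iteration --- a descent to long subprogressions in~\cite{Shk06}, or to Bohr sets in~\cite{Shk06-b} --- is needed to restore it before the next outer step. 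It is this two-tiered increment, with a relative box norm and a pseudorandomness hypothesis on the superstructure $F_1\times F_2$ that must be continually repaired, that is Shkredov's main innovation (the paper singles it out: ``to an extent which depends only on the relative density $|E|/(|F_1||F_2|)$, provided the sets $F_1$ and $F_2$ have some uniformity properties of their own''), and it is what is missing from your plan.
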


In fact, since the appearance of his original article~\cite{Shk06},
in~\cite{Shk06-b} Shkredov has improved the above bound further to the form $2^{2^{1/\delta^C}}$,
effectively by replacing a repeated descent to arithmetic
subprogressions with a descent through a nested sequence of Bohr
sets, following Bourgain's use of these for his improved bounds in
Roth's Theorem~\cite{Bou99,Bou08}.  In addition, Shkredov has shown
in~\cite{Shk09} how this latter argument can also be implemented in
the setting of arbitrary finite Abelian groups (see also Section 5 of Green's
survey~\cite{Gre05} for a treatment of the case of high-dimensional vector
spaces over a finite field).  However, for the sake of simplicity this note will focus on analogs of the original paper~\cite{Shk06}, and
where appropriate make comparisons to the steps taken there.

Thus, we here present a new proof of the following special case of
Theorem~\ref{thm:multiMRT}:

\begin{thm}\label{thm:Shk}
If $T_1,T_2:\bbZ\actson(X,\mu)$ commute and $A \subset X$ has $\mu(A) > 0$ then
\[\liminf_{N\to\infty}\frac{1}{N}\sum_{n=1}^N \mu(A\cap T_1^{-n}A\cap T_2^{-n}A) > 0.\]
\end{thm}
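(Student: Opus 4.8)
The plan is to mimic the one-dimensional argument of Section~\ref{sec:1D}, replacing the Host--Kra pro-nilsystem by the appropriate characteristic factor for the average $\frac1N\sum_n\mu(A\cap T_1^{-n}A\cap T_2^{-n}A)$, and replacing Corollary~\ref{cor:corn-on-nilsys} and Proposition~\ref{prop:rec-in-nilsys} by their two-dimensional counterparts. As in the proof of Theorem~\ref{thm:MRT}, I would argue by induction on $\delta = \mu(A)$: first reduce (by a vague-limit argument passing to $\{0,1\}^{\bbZ^2}$ and dilating) to a \emph{process} $(X\supset A,\mu,T_1,T_2)$ with $\mu(A)=\delta$ and with no ``corners in its return times'', i.e.\ $\mu(A\cap T_1^{-n}A\cap T_2^{-n}A)=0$ for all $n\neq 0$; then establish a two-dimensional density-increment proposition producing a new process $(Y\supset B,\nu,S_1,S_2)$ with $\nu(B)\geq\delta+c(\delta)$ and still no corners in its return times; and finally iterate until the density exceeds $1$, a contradiction. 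The transfer of the density-increment from $X$ to a subshift, and the deduction of Theorem~\ref{thm:Shk} from it, will go through essentially verbatim as in Corollary~\ref{cor:erg-dens-inc} and its use.

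The two substantive ingredients are the analogs, for the average $\frac1N\sum_n\mu(A\cap T_1^{-n}A\cap T_2^{-n}A)$, of (a) Corollary~\ref{cor:corn-on-nilsys} --- a non-uniformity inverse statement identifying a ``structured'' factor onto which $1_A$ has a conditional expectation bounded away from $\delta$ in $L^2$ --- and (b) Proposition~\ref{prop:rec-in-nilsys} --- a decomposition of a positive-measure subset of that structured factor into pieces that are almost invariant under appropriate powers of the relevant transformation. For (a), the correct characteristic factor here is not a pro-nilsystem but is built from the Conze--Lesigne-type factors for the two commuting transformations, as studied in~\cite{ConLes84,ConLes88.1,ConLes88.2} and~\cite{Aus--nonconv,Aus--newmultiSzem}; the key point is that (after a telescoping argument as in the proof of Corollary~\ref{cor:corn-on-nilsys}, which here produces correlation of $1_A-\delta$ with a product involving $T_1$- and $T_2$-shifted copies) failure of the average to be small forces a nontrivial conditional expectation of $1_A$ onto such a factor, and moreover onto a finite-rank sub-factor carrying enough structure --- concretely, something like an isometric (or at worst a Conze--Lesigne) extension of a Kronecker factor for one of the transformations --- on which one can run a recurrence-and-decomposition step. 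This is exactly the step where Shkredov's~\cite{Shk06} new finitary ideas enter, and where the ergodic analog requires the more recent structural results cited in the introduction rather than just Host--Kra.

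For (b), once the structured factor is pinned down, one needs a multiple-recurrence statement for it (the analog of Theorem~\ref{thm:nilsys-MRT}, for the pattern $\{x, g_1^r x, g_2^r x\}$ on the structured factor) and then a Zorn's-Lemma exhaustion argument identical in form to Proposition~\ref{prop:rec-in-nilsys}, yielding an almost-$g^r$-invariant set $B$ on which $\mu(A\mid\text{-})$ is increased by a fixed amount $c(\delta)$; pulling $B$ back to $X$ and cleaning up the boundary (as in the final display of the proof of Proposition~\ref{prop:erg-dens-inc}) gives the two-dimensional density-increment. The main obstacle, and the reason this case is genuinely harder than Theorem~\ref{thm:MRT}, is step (a): the characteristic factor for two commuting transformations is not simply a group rotation or nilsystem, so extracting from non-smallness of the average a factor that is both ``structured enough'' to admit the recurrence-and-decomposition argument of (b) and still ``finite'' enough to iterate against requires the delicate Conze--Lesigne analysis, and controlling the quantitative loss there (so that $c(\delta)$ stays bounded away from $0$ on compact sets, as the induction demands) is the crux of the proof.
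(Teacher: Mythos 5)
Your high-level skeleton (reduce to a ``no corners'' process, prove a density-increment proposition, iterate) matches the paper's, and you correctly sense that step (a) requires structural results beyond Host--Kra. But there are two concrete gaps in the middle of your plan, and the second one is fatal.

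First, the characteristic factors for the average $\frac1N\sum_n\mu(A\cap T_1^{-n}A\cap T_2^{-n}A)$ are \emph{not} Conze--Lesigne or isometric extensions of a Kronecker factor, as you propose. After passing to a ``pleasant'' extension (Theorem~\ref{thm:Fberg-struct}), the characteristic factor for the $f_0$ slot is $\pi_0 = \zeta_0^{(1,0)}\vee\zeta_0^{(0,1)}$, the join of the $T_1$-invariant and $T_2$-invariant $\sigma$-algebras, and similarly $\pi_1,\pi_2$ are joins of partially invariant factors. Conze--Lesigne and nilsystem structure does enter for more complicated averages (see Theorem~\ref{thm:super-Fberg-struct}), but not here. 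This matters because the ``structured factor'' you would try to run a decomposition on is not a concrete algebraic object like a nilmanifold; it is an abstract join of two invariant $\sigma$-algebras.

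Second, and this is the crux: because $\pi_0$ has this form, there is no useful analog of Theorem~\ref{thm:nilsys-MRT} on it, and hence no analog of the Zorn's-Lemma exhaustion of Proposition~\ref{prop:rec-in-nilsys} to pull a density increment in $\mu(A)$ out of the non-uniformity. Your plan calls for ``a multiple-recurrence statement for [the structured factor]\ldots and then a Zorn's-Lemma exhaustion argument identical in form to Proposition~\ref{prop:rec-in-nilsys}'' --- but the paper does not, and cannot, proceed this way. The missing idea is Shkredov's: one must carry along two extra partially-invariant sets $E_1\in\Sigma^{(1,0)},\ E_2\in\Sigma^{(0,1)}$ with $A\subset E_1\cap E_2$ (an \emph{augmented process}), impose a uniformity hypothesis on the $E_i$ (here: $E_i\perp T_j^n E_i$ for $n\neq 0$), and run the increment on the \emph{relative} density $\mu(A\mid E_1\cap E_2)$ rather than on $\mu(A)$. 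The main estimate (Proposition~\ref{prop:Shk-main-estimate}) then gives $\|\sfE(A\mid\pi_0)-\mu(A\mid E_1\cap E_2)\|_{L^2(\mu(\cdot\mid E_1\cap E_2))}\geq\mu(A\mid E_1\cap E_2)^3$, from which a $\pi_0$-measurable set of the form $F_1\cap F_2$ with enlarged relative density of $A$ can be extracted. The second step (Proposition~\ref{prop:Shk-inc}, Step 2) then constructs the new augmented process not by an exhaustion argument but by conditioning on a small cell $U$ of the Kronecker factor, choosing $r$ in a Bohr set so that $U$ almost returns and so that Lemma~\ref{lem:weak-rel-weak-mix} restores the independence conditions for the new $E_i'$, and taking a vague limit in a symbol space $(\{0,1\}^3)^{\bbZ^2}$. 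Without the augmented-process bookkeeping you have no quantity to increment: the non-uniformity of $A$ relative to $\pi_0$ alone does not localize onto a structured set you can recur into, the way a positive-measure subset of a nilmanifold does.
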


Henceforth we will generally refer to the quadruple $(X,\mu,T_1,T_2)$ as a \textbf{$\bbZ^2$-system}, in reference to the action of the whole group generated by $T_1$ and $T_2$.

In contrast with our work in the previous section, the analog of Theorem~\ref{thm:HK} that will appear in this setting does not reduce our study to a class of systems for which multiple recurrence can simply be proved directly, as was the case using Theorem~\ref{thm:nilsys-MRT}.  For this reason, although Theorem~\ref{thm:Shk} has of course been known since Furstenberg and Katznelson's work, the proof presented here is not quite so redundant as is the density-increment proof in one dimension (recall the discussion following the statement of Theorem~\ref{thm:nilsys-MRT}).

An important aspect of Shkredov's proof is the introduction, in addition to $E \subset \{1,2,\ldots,N\}^2$, of a
superset of it which is a product set $F_1\times F_2$ which must also be manipulated as the proof proceeds.  We will employ a similar idea in the following, where for a system $(X,\mu,T_1,T_2)$ the structure of a `product set' is replaced by that of an intersection of sets which are invariant under either $T_1$ or $T_2$.  The importance of these special sets corresponds to the emergence of the factor generated by the $T_1$- or $T_2$-invariant sets within the structure of the characteristic factors.  With this in mind, we make the following analog of Definition~\ref{dfn:process}.

\begin{dfn}[Augmented process]\label{dfn:aug-process}
An \textbf{augmented process} is a $\bbZ^2$-system $(X,\mu,T_1,T_2)$ together with distinguished measurable subsets $A$, $E_1$ and $E_2$ satisfying $A\subset E_1\cap E_2$ and such that $E_i$ is $T_i$-invariant.  We shall sometimes denote these data by $(X\supset E_1\cap E_2\supset A,\mu,T_1,T_2)$.
\end{dfn}

\begin{dfn}
An augmented process has \textbf{no corners in its return set} if
\[\mu(A\cap T_1^{-n}A\cap T_2^{-n}A) = 0\quad\forall n\neq 0.\]
\end{dfn}

In addition, the following notation will be used throughout the sequel.

\begin{dfn}[Partially invariant sets]
If $(X,\mu,T_1,T_2)$ is a $\bbZ^2$-system, then a subset $A \subset X$ is \textbf{partially invariant} if it is invariant under $T_1^{n_1}T_2^{n_2}$ for some $(n_1,n_2)$.  The $\s$-algebra of $(T^{n_1}_1T_2^{n_2})$-invariant measurable sets is denoted by $\S^{(n_1,n_2)}$, and in addition we let $\zeta_0^{(n_1,n_2)}$ be some factor map $X\to Z_0^{(n_1,n_2)}$ onto an auxiliary system where the transformation in direction $(n_1,n_2)$ is trivial and which generates $\S^{(n_1,n_2)}$.
\end{dfn}

(This correspondence between globally invariant $\s$-subalgebras of $\S$ and factor maps onto other systems is standard in ergodic theory; see, for instance, Chapter 2 of~\cite{Aus--thesis} and the references given there.)

\begin{dfn}[Kronecker factors]
If $(X,\mu,T_1,T_2)$ is an ergodic $\bbZ^2$-system then $\zeta_1^T$ will denote some choice of a factor map from $X$ onto an action by rotations on a compact Abelian group which generates the Kronecker factor of $(X,\mu,T_1,T_2)$, and similarly for $\bbZ$-systems.
\end{dfn}

\begin{dfn}[Arithmetic of factors]
Given two factor maps $\pi_i:(X,\mu,T_1,T_2)\to (Y_i,\nu_i,S_{1,i},S_{2,i})$ of a $\bbZ^2$-system, we let $\pi_1\vee \pi_2$ denote a factor map which generates the same $\s$-algebra as $\pi_1$ and $\pi_2$ together (for example, the Cartesian product map $(\pi_1,\pi_2):X\to Y_1\times Y_2)$ will do), and $\pi_1\wedge \pi_2$ denote a factor map which generates the $\s$-algebra of all sets that are both $\pi_1$- and $\pi_2$-measurable.
\end{dfn}

In his setting, Shkredov considered nested inclusions
\[E \subset F_1\times F_2 \subset \{1,2,\ldots,N\}^2.\]
His main innovation is the result that in order to count approximately the number of corners in $E$ it suffices to control the non-uniformity of $E$ relative to its superset $F_1\times F_2$, and crucially \emph{to an extent which depends only on the relative density $\frac{|E|}{|F_1||F_2|}$}, provided the sets $F_1$ and $F_2$ have some
uniformity properties of their own. He effectively formulated
this latter uniformity condition in terms of a uniform bound on the
one-dimensional Fourier coefficients of the $F_i$, but for our sets $E_i \in \S^{T_i}$ it turns out that a stronger condition is more convenient, formulated in terms of the independence of their shifts under $T_j$ for $j\neq i$; this condition will appear shortly.

The need for the $E_i$ below becomes natural upon understanding the analog of Theorem~\ref{thm:HK} for the averages of Theorem~\ref{thm:Shk}.  However, in the ergodic theoretic world this involves another new twist, which has no real analog in the finitary setting.  It turns out that simply-described characteristic factors for the averages of Theorem~\ref{thm:Shk} may be obtained only after ascending to some extension of the initially-given system. (The original system will certainly \emph{have} characteristic factors, but they may be much more complicated to describe.)  Of course, it suffices to prove multiple
recurrence for such an extension, and so this is quite adequate for our proof strategy.  The following result is specialized from the
construction of so-called `pleasant and isotropized extensions'
in~\cite{Aus--nonconv,Aus--newmultiSzem}.

\begin{thm}\label{thm:Fberg-struct}
Any $\bbZ^2$-system $(X^\circ,\mu^\circ,T_1^\circ,T_2^\circ)$ has an extension
\[\pi:(X,\mu,T_1,T_2)\to (X^\circ,\mu^\circ,T^\circ_1,T^\circ_2)\]
with the property that
\begin{multline*}
\frac{1}{N}\sum_{n=1}^N\int_X f_0\cdot (f_1\circ T_2^n)\cdot(f_2\circ T_2^n)\,\d\mu\\
\sim \frac{1}{N}\sum_{n=1}^N\int_X \sfE_\mu(f_0\,|\,\pi_0)\cdot \sfE_\mu(f_1\,|\,\pi_1)\cdot \sfE_\mu(f_2\,|\,\pi_2)\,\d\mu
\end{multline*}
as $N\to\infty$ for any $f_0,f_1,f_2 \in L^\infty(\mu)$, where
\begin{eqnarray*}
\pi_0 &:=& \zeta_0^{(1,0)}\vee \zeta_0^{(0,1)}\\
\pi_1 &:=& \zeta_0^{(1,0)}\vee \zeta_0^{(1,-1)}\\
\pi_2 &:=& \zeta_0^{(1,-1)}\vee \zeta_0^{(0,1)}.
\end{eqnarray*}
\qed
\end{thm}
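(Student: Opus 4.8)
The plan is to specialize the construction of pleasant and isotropized extensions from~\cite{Aus--nonconv,Aus--newmultiSzem}, of which this statement is an instance; the argument has three steps. \emph{Step 1 (van der Corput).} For each fixed $i\in\{0,1,2\}$, rewrite the average by composing the integrand under an appropriate power of $T_1$ or of $T_2$ so that $f_i$, rather than one of the other two functions, is carried by a non-trivial power; then apply van der Corput's inequality, followed by the mean ergodic theorem and a Cauchy--Schwarz in the van der Corput parameter. The outcome is that for each $i$
\[\limsup_{N\to\infty}\Big|\frac1N\sum_{n=1}^N\int_X f_0\cdot(f_1\circ T_1^n)\cdot(f_2\circ T_2^n)\,\d\mu\Big|\le\|f_i\|_{\square_i}\]
whenever the remaining two functions lie in the unit ball of $L^\infty(\mu)$, where $\square_i$ is the box seminorm formed from the two edge-directions of the corner $\{0,\,T_1^n,\,T_2^n\}$ incident to vertex $i$: $\square_0$ from $(1,0)$ and $(0,1)$, $\square_1$ from $(1,0)$ and $(1,-1)$, $\square_2$ from $(0,1)$ and $(1,-1)$. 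Here for commuting directions $d,d'$
\[\|f\|_{\square(d,d')}^4:=\lim_{H,K\to\infty}\frac1{HK}\sum_{h=1}^H\sum_{k=1}^K\int_X f\cdot\overline{f\circ T^{dh}}\cdot\overline{f\circ T^{d'k}}\cdot(f\circ T^{dh+d'k})\,\d\mu,\]
the existence of the limit and the seminorm inequalities being established just as for the Host--Kra seminorms.

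\emph{Step 2 (the factor controlled by a box seminorm).} A routine manipulation gives, for each pair $d,d'$, a factor $\frZ(d,d')$ with $\|f\|_{\square(d,d')}=0$ if and only if $\sfE_\mu(f\,|\,\frZ(d,d'))=0$; moreover $\zeta_0^d\vee\zeta_0^{d'}\subseteq\frZ(d,d')$ --- if $f$ is invariant under $T^d$ or under $T^{d'}$ and $f\neq0$ then $\|f\|_{\square(d,d')}>0$, by the mean ergodic theorem and Jensen's inequality --- and in fact $\frZ(d,d')$ is a compact isometric extension of $\zeta_0^d\vee\zeta_0^{d'}$, the two-transformation analogue of the Conze--Lesigne factor. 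In a general $\bbZ^2$-system this last inclusion may be strict, and that is precisely the obstruction which the theorem sidesteps by passing to an extension.

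\emph{Step 3 (the extension).} I would produce an extension $\pi:(X,\mu,T_1,T_2)\to(X^\circ,\mu^\circ,T_1^\circ,T_2^\circ)$ in which $\frZ(d,d')=\zeta_0^d\vee\zeta_0^{d'}$ holds for all three relevant pairs simultaneously. One route is direct: $\frZ(d,d')$ is coordinatized by a compact-group-valued cocycle over $\zeta_0^d\vee\zeta_0^{d'}$, so one adjoins a coordinate trivializing the relevant Conze--Lesigne coboundary defect --- which strictly enlarges $\zeta_0^d$ or $\zeta_0^{d'}$ --- does the same for the other two pairs, and in addition adjoins enough structure to \emph{isotropize}, i.e.\ to place the three invariant factors $\zeta_0^{(1,0)},\zeta_0^{(0,1)},\zeta_0^{(1,-1)}$ in relative independence over their common meet; then one iterates countably often and passes to the inverse limit, checking that box seminorms and conditional expectations survive there. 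A cleaner route uses the satedness formalism of~\cite{Aus--thesis}: let $\mathsf{C}$ be the idempotent class of factors generated --- under joins, inverse limits, and the relative-independence closure --- by all the partially-invariant factors $\zeta_0^{(n_1,n_2)}$; a $\mathsf{C}$-sated extension of the given system exists by a transfinite inverse-limit construction, and one checks that in a $\mathsf{C}$-sated system the reduction of Step~1 closes up with the characteristic factor in slot $i$ being exactly $\pi_i$. Either way, in the resulting extension one writes $f_i=\sfE_\mu(f_i\,|\,\pi_i)+\big(f_i-\sfE_\mu(f_i\,|\,\pi_i)\big)$ and expands the triple product; every term retaining a factor $f_i-\sfE_\mu(f_i\,|\,\pi_i)$ has $\|f_i-\sfE_\mu(f_i\,|\,\pi_i)\|_{\square_i}=0$ and so contributes nothing to the limit by Step~1, and the one surviving term is exactly the asserted identity. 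That this is obtained only after ascending to an extension is harmless, since it suffices to prove multiple recurrence for an extension.

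The main obstacle is Step~3, where three points require care: (1) showing that the cocycle-splitting --- equivalently, the satedness collapse --- really brings $\frZ(d,d')$ all the way down to $\zeta_0^d\vee\zeta_0^{d'}$, which rests on a genuine structural description of these Conze--Lesigne-type factors; (2) handling the three pairs at once, since trivializing the obstruction for one pair may re-create one for another, so that a monotone quantity --- for instance the growth of the three invariant factors along the iteration --- together with an inverse-limit passage is needed to guarantee termination; and (3) the isotropization step, placing the three invariant factors in general position so that the telescoped projections above are genuinely orthogonal. This last point has no counterpart in the single-transformation Host--Kra setting, and is the ergodic-theoretic reflection of Shkredov's requirement that his auxiliary superset $F_1\times F_2$ be uniform.
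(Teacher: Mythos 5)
The paper does not supply its own proof --- the $\qed$ marks Theorem~\ref{thm:Fberg-struct} as quoted from~\cite{Aus--nonconv,Aus--newmultiSzem}, as the sentence preceding it says explicitly. Your sketch reconstructs the strategy of those sources: van der Corput (after the appropriate conjugation by $T_1^{-n}$, $T_2^{-n}$ or the identity in each of the three slots) to bound the averages by a directional box seminorm of $f_i$; identification of a factor $\frZ(d,d')$ controlling each seminorm, with $\zeta_0^d\vee\zeta_0^{d'}\subseteq\frZ(d,d')$; a transfinite inverse-limit passage to an extension in which these inclusions become equalities, most cleanly packaged via the sated/pleasant formalism; and then the telescoping $f_i=\sfE_\mu(f_i\,|\,\pi_i)+(f_i-\sfE_\mu(f_i\,|\,\pi_i))$. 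That is the intended route, and your identification of the three pair-handling and termination issues as the crux of Step~3 is accurate.

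Two imprecisions are worth flagging, though neither breaks the outline. First, in Step~2 you assert that $\frZ(d,d')$ is ``a compact isometric extension of $\zeta_0^d\vee\zeta_0^{d'}$, the two-transformation analogue of the Conze--Lesigne factor.'' No such clean structural description of the factor controlling the box seminorm in a general (unextended) $\bbZ^2$-system is known, and none is needed: the pleasant-extension argument deliberately sidesteps any description of $\frZ(d,d')$ beyond the one inclusion you prove, and the absence of this sort of description for more directions is precisely what is lamented in Section~\ref{sec:2Dprobs}. Second, for Theorem~\ref{thm:Fberg-struct} as stated only pleasantness is needed. The relative-independence (``isotropization'') assertion you fold into Step~3 corresponds in this paper to Lemma~\ref{lem:joint-dist-of-part-invts}, which is proved there for \emph{any} ergodic $\bbZ^2$-system --- no extension required, by appealing to Furstenberg's classical result on products of ergodic systems --- and the independence is relative to the intersections of $\zeta_0^{(1,0)}$, $\zeta_0^{(0,1)}$, $\zeta_0^{(1,-1)}$ with the Kronecker factor, one such intersection per factor, rather than over a single common meet.
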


\begin{dfn}[Pleasant system]
Essentially following the nomenclature of~\cite{Aus--nonconv}, we will refer to a system having the property of the extension constructed above as \textbf{pleasant}.
\end{dfn}

Replacing an initially-given $\bbZ^2$-system with an extension if necessary, we
may henceforth concentrate on pleasant systems.

With this description of the characteristic factors in hand, we can now offer our ergodic theoretic translation of Shkredov's main estimate (Theorem 7 in~\cite{Shk06}).

\begin{prop}\label{prop:Shk-main-estimate}
Suppose that $(X\supset E_1\cap E_2\supset A,\mu,T_1,T_2)$ is a pleasant augmented process with
$\mu(A) > 0$, that
\begin{itemize}
\item the return-set of $A$ contains no corners, and
\item $E_1\perp T_2^n(E_1)$ and $E_2\perp T_1^n(E_2)$ for all $n\neq 0$, where $\perp$ denotes independence,
\end{itemize}
and let $\pi_0 := \zeta_0^{(1,0)}\vee \zeta_0^{(0,1)}$. Then
\[\|\sfE_\mu(A\,|\,\pi_0) - \mu(A\,|\,E_1\cap E_2)\|_{L^2(\mu(\cdot\,|\,E_1\cap E_2))} \geq \mu(A\,|\,E_1\cap E_2)^3.\]
\end{prop}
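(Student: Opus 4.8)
The plan is to run the density-increment dichotomy through the pleasant factors of Theorem~\ref{thm:Fberg-struct}. Write $V:=E_1\cap E_2$, $\delta:=\mu(A\,|\,V)$, $\mu_V:=\mu(\cdot\,|\,V)$, $f:=\sfE_\mu(A\,|\,\pi_0)$, and $g_i:=\sfE_\mu(A\,|\,\pi_i)$ for $i=1,2$. By pleasantness the corner averages of $1_A$ are asymptotic to those of $f,g_1,g_2$, so the no-corners hypothesis forces
\[\lim_{N\to\infty}\frac1N\sum_{n=1}^N\int_X f\cdot(g_1\circ T_1^n)\cdot(g_2\circ T_2^n)\,\d\mu=0,\]
and every Cesàro average that appears below converges, by the convergence theory for these nonconventional averages referenced earlier. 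Two structural facts drive the argument. First, since $A\subset E_i$ and $E_i$ is $T_i$-invariant, the indicator $1_{E_i}$ is measurable with respect to $\zeta_0^{(1,0)}$ (for $i=1$) or $\zeta_0^{(0,1)}$ (for $i=2$), hence with respect to $\pi_0$ and to $\pi_i$; consequently $f=1_Vf$, $g_1=1_{E_1}g_1$ and $g_2=1_{E_2}g_2$. Second, the independence hypotheses can be restated as $\sfE_\mu(1_{E_1}\,|\,\zeta_0^{(0,1)})=\mu(E_1)$ and $\sfE_\mu(1_{E_2}\,|\,\zeta_0^{(1,0)})=\mu(E_2)$: for instance $\|\sfE_\mu(1_{E_1}\,|\,\zeta_0^{(0,1)})\|_2^2=\lim_{N\to\infty}\frac1{N^2}\sum_{m,n=1}^N\mu(E_1\cap T_2^{m-n}E_1)=\mu(E_1)^2$, and the equality case of Cauchy--Schwarz gives constancy. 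From this one deduces at once that $\mu(V)=\mu(E_1)\mu(E_2)$, and, using the commutation identities $(T_1T_2^{-1})^nE_1=T_2^{-n}E_1$ and $(T_1T_2^{-1})^nE_2=T_1^nE_2$, also that $\sfE_\mu(1_{E_i}\,|\,\zeta_0^{(1,-1)})=\mu(E_i)$.

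Next, decompose $f=\delta 1_V+f_*$ and $g_i=\frac{\mu(A)}{\mu(E_i)}1_{E_i}+\tilde g_i$, where $f_*$, $\tilde g_1$, $\tilde g_2$ all have integral zero and are supported in $V$, $E_1$, $E_2$ respectively. Expand the trilinear integrand into its eight pieces and average over $n$. The piece built from the three constant parts contributes $\delta\cdot\frac{\mu(A)^2}{\mu(E_1)\mu(E_2)}\cdot\mu(V)=\delta\mu(A)^2=\delta^3\mu(V)^2$, using $V\subset E_1\cap E_2$ and $\mu(V)=\mu(E_1)\mu(E_2)$. Each piece carrying exactly one fluctuation factor vanishes in the limit: for $f_*$ this is just $\int f_*\,\d\mu=0$, while for $\tilde g_1$ one replaces $\tfrac1N\sum_n\tilde g_1\circ T_1^n$ by $\sfE_\mu(\tilde g_1\,|\,\zeta_0^{(1,0)})$ via the mean ergodic theorem, then uses $\sfE_\mu(1_{E_2}\,|\,\zeta_0^{(1,0)})=\mu(E_2)$ together with $\int\tilde g_1\,\d\mu=0$ to kill it (and symmetrically for $\tilde g_2$, using $\sfE_\mu(1_{E_1}\,|\,\zeta_0^{(0,1)})=\mu(E_1)$). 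The remaining pieces --- namely $\delta 1_V(\tilde g_1\circ T_1^n)(\tilde g_2\circ T_2^n)$ and the three involving $f_*$ and at least one fluctuation --- are handled by the substitution $x\mapsto T_2^{-n}x$, which turns $(T_1^n,T_2^n)$ into $((T_1T_2^{-1})^n,\mathrm{id})$, followed by the mean ergodic theorem for $T_1T_2^{-1}$ (producing $\zeta_0^{(1,-1)}$-conditional expectations) and Cauchy--Schwarz, using $0\le g_i\le1$ so that $g_i^2\le g_i$. Each of these is thereby bounded by $\|f_*\|_{L^2(\mu)}$ times a small factor.

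Collecting everything, the vanishing of the whole average forces the error pieces to balance the positive main term $\delta^3\mu(V)^2$, and since $\|f_*\|_{L^2(\mu)}=\mu(V)^{1/2}\|f-\delta 1_V\|_{L^2(\mu_V)}$, rearranging is designed to produce exactly $\|f-\delta 1_V\|_{L^2(\mu_V)}\geq\delta^3$, which is the assertion. The main obstacle is the last step: one must arrange the Cauchy--Schwarz estimates in the treatment of the $f_*$-pieces so that the loss is governed entirely by the \emph{relative} density $\delta=\mu(A\,|\,E_1\cap E_2)$ and not by the absolute density $\mu(A)$, so that the final exponent is precisely $3$ rather than a larger power accompanied by spurious factors of $\mu(V)$; this is exactly where Shkredov's technical innovations are needed, and where the uniformity hypotheses $E_i\perp T_j^nE_i$ must be exploited beyond merely making the one-fluctuation terms vanish. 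A secondary point to be careful about is the legitimacy of all the Cesàro limits invoked above, which is supplied by the known convergence results for the averages of Theorems~\ref{thm:MRT} and~\ref{thm:multiMRT}.
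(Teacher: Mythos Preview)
Your plan has a real gap, one you yourself flag in the last paragraph. Of the four ``remaining'' pieces you list, the term
\[
\delta\,\lim_{N\to\infty}\frac1N\sum_{n=1}^N\int 1_V\cdot(\tilde g_1\circ T_1^n)\cdot(\tilde g_2\circ T_2^n)\,\d\mu
\]
does not contain $f_*$, so it simply cannot be ``bounded by $\|f_*\|_{L^2(\mu)}$ times a small factor'' as you assert. (In fact a short calculation using your own identities $\sfE(1_{E_i}\,|\,\zeta_0^{(1,-1)})=\mu(E_i)$ shows that this term equals $\delta\,\|\sfE(A\,|\,\zeta_0^{(1,-1)})-\mu(A)\|_2^2\ge 0$, so it adds to the main term rather than needing to be absorbed; but you have not seen this.) For the three pieces that do carry $f_*$, your proposed route --- substitute $x\mapsto T_2^{-n}x$, apply the mean ergodic theorem for $T_1T_2^{-1}$, then Cauchy--Schwarz --- produces bounds of the form $\|f_*\|_2$ times quantities like $\|\sfE(A\,|\,\zeta_0^{(1,0)})-\tfrac{\mu(A)}{\mu(E_1)}1_{E_1}\|_2$, and there is no reason these should be controlled purely by the relative density $\delta$. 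So the rearrangement you describe does not close.

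The paper avoids this entirely by decomposing only in the \emph{zeroth} slot. One writes
\[
0=\l(A,A,A)=\l(f-\delta 1_V,\,A,\,A)+\delta\,\l(1_V,A,A),
\]
bounds the second term below by $\delta^2\mu(V)^2$ via the mean ergodic theorem (exactly your substitution trick, applied once), and bounds the first term above by first using the pointwise inequality $1_A\le 1_V$ in slots $1$ and $2$ to get $|\l(f-\delta 1_V,A,A)|\le \l(|f-\delta 1_V|,\,1_V,\,1_V)$, and \emph{then} applying pleasantness again to replace $1_V$ in those slots by $\sfE(1_V\,|\,\pi_1)$ and $\sfE(1_V\,|\,\pi_2)$. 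The crucial structural input --- which goes beyond the separate identities $\sfE(1_{E_i}\,|\,\zeta_0^{(0,1)})=\mu(E_i)$ and $\sfE(1_{E_i}\,|\,\zeta_0^{(1,-1)})=\mu(E_i)$ that you establish --- is that $E_1$ is independent from the \emph{join} $\pi_2=\zeta_0^{(0,1)}\vee\zeta_0^{(1,-1)}$ (and symmetrically). This follows because the three partially-invariant factors are relatively independent over the Kronecker factor (Lemma~\ref{lem:joint-dist-of-part-invts}), and your hypothesis $E_1\perp T_2^nE_1$ forces $E_1$ to be independent of the Kronecker factor (Corollary~\ref{cor:self-orthog-implies-orthog-to-Kron}); combining these gives Lemma~\ref{lem:1D-unif-to-rel-ind}. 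With $\sfE(1_V\,|\,\pi_i)=\mu(E_j)1_{E_i}$ exactly, the upper bound collapses to $\mu(V)\int|f-\delta 1_V|\,\d\mu=\mu(V)^2\|f-\delta\|_{L^1(\mu_V)}$, and H\"older finishes. No separate Cauchy--Schwarz on cross terms is needed, and the dependence on $\delta$ alone is automatic.
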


The benefit of working with the conditions $E_1\perp T_2^n(E_1)$
is that they will be relatively easy to recover for the new process that we construct during the coming density increment.  We will see shortly (Corollary~\ref{cor:self-orthog-implies-orthog-to-Kron}) that this condition implies that $E_1$ is orthogonal to the Kronecker factor $\zeta_1^T$, and this orthogonality is a truer ergodic-theoretic analog of Shkredov's condition that they be degree-$1$ uniformity.

Proposition~\ref{prop:Shk-main-estimate} will be proved in Subsection~\ref{subs:main-est}.

\subsection{A closer look at the characteristic factors and the main estimate}

Before proving
Proposition~\ref{prop:Shk-main-estimate} we need some simple auxiliary results about the factors appearing in Theorem~\ref{thm:Fberg-struct}.

\begin{lem}\label{lem:joint-dist-of-part-invts}
If $(X,\mu,T_1,T_2)$ is ergodic as a $\bbZ^2$-system, then any two of the factors $\zeta_0^{(1,0)}$, $\zeta_0^{(0,1)}$, $\zeta_0^{(1,-1)}$ are independent, and the three together are relatively independent over their intersections with the Kronecker factor:
\[\zeta_1^T \wedge \zeta_0^{(1,0)},\ \zeta_1^T\wedge \zeta_0^{(0,1)},\ \zeta_1^T\wedge \zeta_0^{(1,-1)}.\]
\end{lem}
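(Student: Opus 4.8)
The plan is to analyze the joint distribution of the three partially invariant factors $\zeta_0^{(1,0)}$, $\zeta_0^{(0,1)}$, $\zeta_0^{(1,-1)}$ by the standard device of testing independence against products of functions measurable with respect to each. For pairwise independence, consider two of these factors, say $\zeta_0^{(1,0)}$ and $\zeta_0^{(0,1)}$, generated by the $\sigma$-algebras $\Sigma^{(1,0)}$ and $\Sigma^{(0,1)}$ of $T_1$- and $T_2$-invariant sets. For $f$ that is $\Sigma^{(1,0)}$-measurable and $g$ that is $\Sigma^{(0,1)}$-measurable, I want to show $\int_X fg\,\d\mu = \int_X f\,\d\mu\cdot\int_X g\,\d\mu$. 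The key observation is that $f\circ T_2^n = f$ is false in general, but $f$ is $T_1$-invariant, so $\frac1N\sum_{n=1}^N \int_X f\cdot(g\circ T_2^n)\,\d\mu = \int_X f\cdot\big(\frac1N\sum_{n=1}^N g\circ T_2^n\big)\,\d\mu$, and since $g$ is already $T_2$-invariant this equals $\int_X fg\,\d\mu$ for every $N$. On the other hand, I can also write $\int_X f\cdot(g\circ T_2^n)\,\d\mu = \int_X (f\circ T_2^{-n})\cdot g\,\d\mu$ and average; but this is the wrong direction. Instead the cleanest route is: by the mean ergodic theorem for $T_2$ applied inside the ambient $\bbZ^2$-system, $\frac1N\sum_{n=1}^N g\circ T_2^n \to \sfE_\mu(g\,|\,\Sigma^{(0,1)})$, which is just $g$; but I actually want to bring in ergodicity of the whole $\bbZ^2$-action, so instead I average $f\circ T_1^{m}$ — which does nothing since $f$ is $T_1$-invariant — and combine the two directions. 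The correct statement to invoke is that, because the $\bbZ^2$-system is ergodic, $\Sigma^{(1,0)}$ and $\Sigma^{(0,1)}$ together with the $\bbZ^2$-ergodic averages force $\sfE_\mu(g\,|\,\Sigma^{(1,0)}) = \int g\,\d\mu$ when $g\in\Sigma^{(0,1)}$: indeed $\frac1N\sum_{n=1}^N g\circ T_1^n \to \sfE(g\,|\,\Sigma^{(1,0)})$ by von Neumann, while $g\circ T_1^n$ has the same distribution as $g$ and, by ergodicity of $T_1T_2^{-1}$-orbits or of the full action, these averages converge to $\int g\,\d\mu$. Then $\int_X fg\,\d\mu = \int_X f\cdot\sfE(g\,|\,\Sigma^{(1,0)})\,\d\mu = \int f\,\d\mu\int g\,\d\mu$. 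The same argument applies to the pairs $\{\zeta_0^{(1,0)},\zeta_0^{(1,-1)}\}$ and $\{\zeta_0^{(0,1)},\zeta_0^{(1,-1)}\}$, using the appropriate direction in which one factor is invariant.

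For the relative independence over the Kronecker factor, I would argue that, after conditioning on $\zeta_1^T$, the relevant differences lie in the orthocomplement of the Kronecker factor, i.e. are \emph{weakly mixing} relative to $\zeta_1^T$, and weakly mixing functions are killed by the ergodic averages one uses above; this upgrades pairwise independence to joint relative independence over $\zeta_1^T\wedge(\cdot)$. Concretely, take $f_j$ measurable with respect to $\zeta_0^{(v_j)}$ for the three directions $v_1=(1,0)$, $v_2=(0,1)$, $v_3=(1,-1)$, and reduce to the case where each $f_j$ is orthogonal to $\zeta_1^T\wedge\zeta_0^{(v_j)}$; I then want $\int_X f_1f_2f_3\,\d\mu = 0$. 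Replace $f_1$ by $f_1\circ T_1^n$ (no change, $T_1$-invariant), $f_2$ by the same with $T_2$, and average over $n$ — this leaves the integral unchanged — but simultaneously observe that $f_3\circ(T_1T_2^{-1})^n$ also equals $f_3$, so I can average the single shift $T_1^n$ and get $\frac1N\sum_n\int (f_1)(f_2\circ T_1^{-n})(f_3\circ T_1^{-n})\,\d\mu$ after reindexing; the averaged version of $(f_2\circ T_1^{-n})(f_3\circ T_1^{-n}) = (f_2 f_3)\circ T_1^{-n}$ converges to $\sfE(f_2f_3\,|\,\Sigma^{(1,0)})$, and since $f_1\in\Sigma^{(1,0)}$ the whole thing is $\int f_1\cdot\sfE(f_2f_3\,|\,\zeta_0^{(1,0)})\,\d\mu$. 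One then iterates, peeling off the Kronecker contribution of $f_2f_3$; the van der Corput / spectral argument shows the surviving part is controlled by the Kronecker factor of the $\bbZ$-action generated by $T_1$, and relative weak mixing of the $f_j$ over $\zeta_1^T$ forces it to vanish. This is essentially the classical Furstenberg relative-product argument restricted to the very concrete setting of invariant $\sigma$-algebras.

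The main obstacle I anticipate is getting the relative-independence half exactly right: pairwise independence is a short mean-ergodic-theorem computation, but the three-fold statement requires care about \emph{which} Kronecker factor (that of $T_1$, of $T_2$, or of the $\bbZ^2$-action) controls each pairing, and one must check these all coincide on the relevant subspaces — this is where the hypothesis of $\bbZ^2$-ergodicity is genuinely used, to ensure the ambient Kronecker factor $\zeta_1^T$ dominates each of the directional Kronecker factors. A secondary subtlety is that $\zeta_0^{(1,-1)}$ involves the transformation $T_1 T_2^{-1}$, which is not one of the generators, so one has to keep the three directions $(1,0)$, $(0,1)$, $(1,-1)$ symmetric in the bookkeeping and not privilege $T_1,T_2$. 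I expect the write-up to proceed by first proving pairwise independence for all three pairs by the symmetric averaging trick above, then deducing the relative independence over $\zeta_1^T\wedge(\cdot)$ by a relative-weak-mixing argument, invoking the standard correspondence between invariant $\sigma$-algebras and factor maps recalled in the excerpt.
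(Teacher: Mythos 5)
Your overall strategy matches the paper's: prove pairwise independence directly from $\bbZ^2$-ergodicity, then deduce three-fold relative independence by analyzing a conditional expectation onto one of the partially-invariant $\sigma$-algebras and showing it only sees Kronecker data. But both halves of your write-up have gaps that would need repair.

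For pairwise independence: you correctly identify that $\frac1N\sum_n g\circ T_1^n \to \sfE_\mu(g\,|\,\Sigma^{(1,0)})$ by von Neumann, but you then assert this limit equals $\int g\,\d\mu$ ``by ergodicity of $T_1T_2^{-1}$-orbits or of the full action.'' As written this is not a proof: $\bbZ^2$-ergodicity does \emph{not} imply that the $T_1$-ergodic averages converge to the constant $\int g$ (that would require $T_1$ alone, or $T_1T_2^{-1}$ alone, to be ergodic, which is not given). The missing step — and the move the paper makes — is that $\sfE_\mu(g\,|\,\Sigma^{(1,0)})$ is $T_1$-invariant by construction, and because $T_1$ and $T_2$ commute and $g$ is $T_2$-invariant, one checks $\sfE_\mu(g\,|\,\Sigma^{(1,0)})\circ T_2 = \sfE_\mu(g\circ T_2\,|\,T_2^{-1}\Sigma^{(1,0)}) = \sfE_\mu(g\,|\,\Sigma^{(1,0)})$, so it is also $T_2$-invariant; hence it is constant by $\bbZ^2$-ergodicity. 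You never invoke commutativity explicitly, but that is where the content lies.

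For the three-fold statement: your plan (condition $f_2f_3$ on $\Sigma^{(1,0)}$, show the result is Kronecker-measurable, pair against $f_1$) is the paper's plan modulo a relabeling of which direction is privileged — the paper conditions $1_{A_1}1_{A_2}$ on $\Sigma^{(1,-1)}$ instead, but by the symmetry of the directions $(1,0),(0,1),(1,-1)$ these are the same argument. The ingredient you need but only gesture at as ``van der Corput / spectral argument'' or ``the classical Furstenberg relative-product argument'' is, concretely, the fact that the $\sigma$-algebra of invariant sets of a product of two ergodic $\bbZ$-systems is generated by the product of their Kronecker factors; the paper invokes exactly this (Furstenberg's Theorem 7.1) after using pairwise independence to identify $\zeta_0^{(1,0)}\vee\zeta_0^{(0,1)}$ with a genuine Cartesian product system $(Y_1\times Y_2,\nu_1\otimes\nu_2)$ on which $T_1T_2^{-1}$ acts as $S_2^{-1}\times S_1$. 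You should cite that fact explicitly rather than appeal to van der Corput. Finally, a small slip: the reduction should be to the case where \emph{at least one} $f_j$ is orthogonal to $\zeta_1^T\wedge\zeta_0^{(v_j)}$, not where \emph{each} is — you need to handle the cross terms where only one factor has been replaced by its Kronecker-orthogonal part.
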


\begin{proof}
The first assertion is an immediate consequence of the commutativity of $T_1$ and $T_2$.  We prove it for $\zeta_0^{(1,0)}$ and $\zeta_0^{(0,1)}$, the other pairs being similar: since $T_1$ and $T_2$ commute, if $A_1$ is $T_1$-invariant then the conditional expectation $\sfE(A_1\,|\,\zeta_0^{(0,1)})$ is invariant under both $T_1$ and $T_2$ and hence constant, by ergodicity, and must therefore simply equal $\mu(A_1)$.

Handling the three factors together is only a little trickier.  If $A_1 \in \zeta_0^{(1,0)}$, $A_2 \in \zeta_0^{(0,1)}$ and $A_{12} \in \zeta_0^{(1,-1)}$, then by the first assertion the target of the factor map $\zeta_0^{(1,0)}\vee \zeta_0^{(0,1)}$ can simply be identified with a Cartesian product system
\[(Y_1\times Y_2,\nu_1\otimes \nu_2,S_2\times \rm{id},\rm{id} \times S_1)\]
where $S_2$ is an ergodic transformation of the first coordinate alone and $S_1$ an ergodic transformation of the second.  The fact that the invariant measure of this target system is a product $\nu_1\otimes \nu_2$ corresponds to the independence of $\zeta_0^{(1,0)}$ and $\zeta_0^{(0,1)}$.  In this picture the set $A_i$ is lifted from some subset $A_i' \subset Y_i$ under the further coordinate projection $Y_1\times Y_2 \to Y_i$.  Since $A_{12}$ is $\zeta_0^{(1,-1)}$-measurable one has
\[\mu(A_1\cap A_2\cap A_{12}) = \int_X \sfE(A_1\cap A_2\,|\,\zeta_0^{(1,-1)})\cdot 1_{A_{12}}\,\d\mu,\]
and on $Y_1\times Y_2$ the conditional expectation $\sfE(A_1\cap A_2\,|\,\zeta_0^{(1,-1)})$ is identified with the conditional expectation of $A_1'\times A_2'$ onto the sets invariant under $S_2^{-1}\times S_1$.

It is standard that the invariant sets of a product of ergodic systems depend only on the product of their Kronecker factors (see, for instance, the more general Theorem 7.1 in Furstenberg's original paper~\cite{Fur77}), and so our conditional expectation of $A_1'\times A_2'$ is actually onto the invariant sets of $\zeta_1^{S_2}\times \zeta_1^{S_1}$, whose lifts back up to $X$ must all be measurable with respect to $\zeta_1^T$.  Therefore $\sfE(A_1\cap A_2\,|\,\zeta_0^{(1,-1)})$ is actually $\zeta_1^T$-measurable, and so the above integral is equal to
\[\int_X \sfE(A_1\cap A_2\,|\,\zeta_0^{(1,-1)})\cdot \sfE(A_{12}\,|\,\zeta_1^T\wedge \zeta_0^{(1,-1)})\,\d\mu = \int_X 1_{A_1\cap A_2}\cdot \sfE(A_{12}\,|\,\zeta_1^T\wedge \zeta_0^{(1,-1)})\,\d\mu.\]
Applying a symmetric argument to the other sets $A_i$ now shows that this equals
\[\int_X \sfE(A_1\,|\,\zeta_1^T\wedge \zeta_0^{(1,0)})\cdot \sfE(A_2\,|\,\zeta_1^T\wedge \zeta_0^{(0,1)})\cdot \sfE(A_{12}\,|\,\zeta_1^T\wedge \zeta_0^{(1,-1)})\,\d\mu,\]
which is the desired assertion of relative independence.
\end{proof}

\begin{rmk}
The second part of the above lemma, although a very simple consequence of classical results in ergodic theory, has an important counterpart in Lemma 1 (4) of~\cite{Shk06}. It corresponds to the assertion that if sets $F_1,F_2,F_{12}\subseteq \bbZ/N\bbZ$ are lifted through the coordinate projections
\[(n_1,n_2) \mapsto n_1,\ n_2,\ n_1 + n_2\quad\hbox{respectively}\]
and if in addition they are all linearly uniform (meaning that their Fourier coefficients are all small), then their lifts are approximately independent. In his paper Shkredov phrases this in terms of the approximate constancy of a certain convolution of two functions that are lifted from $\bbZ/N\bbZ$ in this way. \fin
\end{rmk}

\begin{lem}\label{lem:weak-rel-weak-mix}
Suppose that $(Y,\nu,S)$ is an ergodic $\bbZ$-system and let $\zeta^S_1:(Y,\nu,S)\to (Z,m_Z,R)$ be its Kronecker factor.  Then for any $f,g \in L^\infty(\nu)$, any $B \subset X$ with $\nu(B) > 0$ that is $\zeta^S_1$-measurable, and any $\eps > 0$, the set
\[\Big\{n \in \bbZ:\ \Big|\int_B f\cdot (g\circ S^n)\,\d\nu - \int_B\sfE_\nu(f\,|\,\zeta^S_1)\cdot\sfE_\nu(g\circ S^n\,|\,\zeta^S_1)\,\d\nu\Big| \leq \eps\Big\}\]
has density $1$ in $\bbZ$.
\end{lem}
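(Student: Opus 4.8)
The plan is to reduce the assertion to the classical fact that an ergodic $\bbZ$-system is relatively weakly mixing over its Kronecker factor, upgraded to the standard quantitative form: for any $u \in L^\infty(\nu)$ with $\sfE_\nu(u\,|\,\zeta^S_1) = 0$ one has $\frac{1}{N}\sum_{n=1}^N \big|\int_Y u\cdot (v\circ S^n)\,\d\nu\big|^2 \to 0$ for every $v \in L^\infty(\nu)$, and indeed the set of $n$ for which $\big|\int_Y u\cdot (v\circ S^n)\,\d\nu\big| > \eps$ has density zero. First I would write $f = f_0 + f_1$ and $g = g_0 + g_1$ with $f_1 := \sfE_\nu(f\,|\,\zeta^S_1)$, $g_1 := \sfE_\nu(g\,|\,\zeta^S_1)$, so $\sfE_\nu(f_0\,|\,\zeta^S_1) = \sfE_\nu(g_0\,|\,\zeta^S_1) = 0$. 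Expanding the bilinear expression $\int_B f\cdot(g\circ S^n)\,\d\nu$ and using that $1_B$ is $\zeta^S_1$-measurable (so that $\int_B f_1\cdot(g_1\circ S^n)\,\d\nu$ is exactly the ``Kronecker'' term on the right-hand side), the difference in the statement equals
\[
\int_B f_0\cdot(g_0\circ S^n)\,\d\nu + \int_B f_0\cdot(g_1\circ S^n)\,\d\nu + \int_B f_1\cdot(g_0\circ S^n)\,\d\nu .
\]
It therefore suffices to show each of these three terms is small off a density-zero set of $n$.

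For the first term, $\int_B f_0\cdot(g_0\circ S^n)\,\d\nu = \int_Y (1_B f_0)\cdot(g_0\circ S^n)\,\d\nu$, and $1_B f_0$ still has zero conditional expectation onto $\zeta^S_1$ because $1_B$ is $\zeta^S_1$-measurable; so relative weak mixing over the Kronecker factor makes this term exceed $\eps/3$ only on a set of density zero. The third term is handled the same way, writing it as $\int_Y (1_B f_1)\cdot(g_0\circ S^n)\,\d\nu$ with $g_0$ carrying the zero conditional expectation. The second term, $\int_B f_0\cdot(g_1\circ S^n)\,\d\nu$, needs a slightly different treatment since $g_1$ is not $S$-invariant: here I would use that $g_1$ is almost periodic, so for any $\eta > 0$ the orbit $\{g_1\circ S^n\}$ lies within $\eta$ (in $L^2(\nu)$) of a finite-dimensional $S$-invariant subspace; approximating $g_1\circ S^n$ by a fixed finite linear combination of eigenfunctions reduces the second term to finitely many expressions of the form (bounded scalar)$\cdot\int_Y (1_B f_0)\cdot\chi\,\d\nu$ with $\chi$ an eigenfunction, and each such integral vanishes exactly since $\chi$ is $\zeta^S_1$-measurable and $1_B f_0 \perp \zeta^S_1$. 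Taking $\eta$ small relative to $\eps$ and $\|g\|_\infty$ makes this term uniformly smaller than $\eps/3$ for all $n$. Finally, a finite intersection of density-$1$ sets is density-$1$, which gives the claim.

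The main obstacle is purely bookkeeping: making sure the splitting is done so that $1_B$ always sits with the factor that is being projected away, and correctly invoking the quantitative (density-zero, not merely Cesàro) form of relative weak mixing over the Kronecker factor — this is classical, following from the spectral decomposition of $L^2(\nu)$ relative to $\zeta^S_1$ together with the fact that the continuous part of the relevant spectral measures carries no atoms, so that the van der Corput / spectral argument yields convergence to zero along a density-$1$ set rather than merely in density. No genuinely new idea is required beyond this standard input.
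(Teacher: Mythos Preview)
Your proposal is correct and follows essentially the same strategy as the paper: reduce to the classical density-one weak mixing property of the Kronecker factor. The paper's proof is simply a more compressed version of yours. Instead of decomposing both $f$ and $g$ and handling three cross terms separately, the paper absorbs $1_B$ into $f$ once at the outset (using $\sfE_\nu(f1_B\,|\,\zeta^S_1) = 1_B\,\sfE_\nu(f\,|\,\zeta^S_1)$, exactly as you do) to reduce to the case $B = Y$, and then just cites the standard fact that $\langle f, g\circ S^n\rangle \sim \langle \sfE(f\,|\,\zeta^S_1), \sfE(g\,|\,\zeta^S_1)\circ S^n\rangle$ off a density-zero set.

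One small simplification you missed: your second cross term $\int_B f_0\cdot(g_1\circ S^n)\,\d\nu$ does not need the almost-periodic approximation at all. The Kronecker $\sigma$-algebra is $S$-invariant, so $g_1\circ S^n$ is itself $\zeta^S_1$-measurable for every $n$; hence $\int_Y (1_B f_0)\cdot (g_1\circ S^n)\,\d\nu = \int_Y \sfE_\nu(1_B f_0\,|\,\zeta^S_1)\cdot (g_1\circ S^n)\,\d\nu = 0$ identically. Your eigenfunction argument recovers this, but the direct observation is cleaner.
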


\begin{rmk}
The conclusion of this lemma may be re-phrased as asserting that
\[\sfE_\nu(f\cdot (g\circ S^n)\,|\,\zeta^S_1) \sim \sfE_\nu(f\,|\,\zeta^S_1)\cdot\sfE_\nu(g\circ S^n\,|\,\zeta^S_1)\]
\emph{weakly} in $L^2(m_Z)\circ \zeta^S_1 \subset L^2(\nu)$ as $n\to\infty$ along some full-density subset of $\bbZ$.  Strong convergence here for all $f$ and $g$, rather than weak convergence, would be equivalent to $(Y,\nu,S)$ being relatively weakly mixing over its Kronecker factor, which is not always the case. \fin
\end{rmk}

\begin{proof}
On the one hand
\[\int_B f\cdot (g\circ S^n)\,\d\nu = \int_Y (f1_B)\cdot (g\circ S^n)\,\d\nu\]
and on the other $\sfE_\nu(f1_B\,|\,\zeta^S_1) = \sfE_\nu(f\,|\,\zeta^S_1)1_B$, because $B$ is already $\zeta^S_1$-measurable, so after replacing $f$ with $f1_B$ if necessary it suffices to treat the case $B = Y$. The desired assertion is now simply that
\[\langle f,g\circ S^n\rangle \sim \langle \sfE(f\,|\,\zeta^S_1),\sfE(g\,|\,\zeta^S_1)\circ S^n\rangle\]
as $n\to\infty$ outside some zero-density set of `exceptional times' in $\bbZ$, and this is a well-known property of the Kronecker factor (see, for instance, Furstenberg~\cite{Fur81}).
\end{proof}

\begin{cor}\label{cor:self-orthog-implies-orthog-to-Kron}
If $(Y,\nu,S)$ is an ergodic $\bbZ$-system and $E \subset Y$ is
such that $E\perp S^n(E)$ for all $n\neq 0$ then $E$ is independent from the $\s$-algebra generated by $\zeta^S_1$ under $\mu$.
\end{cor}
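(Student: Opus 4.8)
The plan is to read off the conclusion from Lemma~\ref{lem:weak-rel-weak-mix} applied with $f = g = 1_E$ and $B := Y$, followed by a Wiener-type computation on the compact abelian group underlying the Kronecker factor. Write $\phi := \sfE_\nu(1_E\,|\,\zeta^S_1)$; since independence of $E$ from the $\s$-algebra generated by $\zeta^S_1$ is equivalent to $\phi = \nu(E)$ $\nu$-a.e., it suffices to prove that $\psi := \phi - \nu(E)$ vanishes in $L^2(\nu)$. Because $\zeta^S_1$ generates an $S$-invariant $\s$-algebra one has the standard identity $\sfE_\nu(1_E\circ S^n\,|\,\zeta^S_1) = \phi\circ S^n$, so Lemma~\ref{lem:weak-rel-weak-mix} gives that
\[\int_Y 1_E\cdot(1_E\circ S^n)\,\d\nu - \int_Y\phi\cdot(\phi\circ S^n)\,\d\nu\longrightarrow 0\]
as $n\to\infty$ outside a set of density $0$. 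On the other hand the hypothesis $E\perp S^n(E)$ says precisely that $\int_Y 1_E\cdot(1_E\circ S^n)\,\d\nu = \nu(E)^2$ for every $n\neq 0$, and since $\int_Y\psi\,\d\nu = 0$ an expansion of $\phi = \psi + \nu(E)$ turns the previous display into the statement that $\int_Y\psi\cdot(\psi\circ S^n)\,\d\nu\to 0$ as $n\to\infty$ along a set of density $1$.

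It remains to deduce $\psi = 0$ from this. Here I would use that $\psi$ is $\zeta^S_1$-measurable, hence identifiable with an element of $L^2(m_Z)$ on the compact abelian group $Z$ carrying the Kronecker rotation $R$, and expand it in characters, $\psi = \sum_\chi\hat\psi(\chi)\chi$, where the coefficient of the trivial character is $\int_Y\psi\,\d\nu = 0$ so that only nontrivial characters occur. Writing $R$ as translation by some $\a\in Z$ whose orbit $\{n\a:\ n\in\bbZ\}$ is dense (by ergodicity), each $\chi$ is an eigenfunction of $R$ with eigenvalue $\chi(\a)\in\bbT$, and a nontrivial $\chi$ cannot have $\chi(\a) = 1$, since that would force $\chi$ to be constant on a dense set. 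A direct computation then gives
\[\int_Y\psi\cdot(\psi\circ S^n)\,\d\nu = \sum_\chi|\hat\psi(\chi)|^2\,\overline{\chi(\a)}^n,\]
a bounded sequence in $n$; averaging its square over $1\leq n\leq N$ and letting $N\to\infty$ produces $\sum_\chi|\hat\psi(\chi)|^4$, because the off-diagonal terms vanish in the Cesàro limit (if $\chi(\a) = \chi'(\a)$ then $\chi = \chi'$ by the same density argument). Since $\int_Y\psi\cdot(\psi\circ S^n)\,\d\nu$ is bounded and tends to $0$ along a set of density $1$, the Cesàro averages of its square also tend to $0$, whence $\hat\psi(\chi) = 0$ for every $\chi$ and $\psi = 0$; equivalently $\phi = \nu(E)$ a.e., which is the asserted independence.

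There is no serious obstacle here, but one point does require attention: one must genuinely use the density-$1$ refinement extracted from Lemma~\ref{lem:weak-rel-weak-mix}. The plain Cesàro average $\frac1N\sum_{n=1}^N\int_Y\psi\cdot(\psi\circ S^n)\,\d\nu$ tends to $0$ for free --- each geometric sum $\frac1N\sum_{n\leq N}\overline{\chi(\a)}^n$ already does, since $\chi(\a)\neq 1$ --- so it carries no information; only by passing to the \emph{second} moment of the autocorrelation sequence, as above, does the vanishing of Cesàro averages become equivalent to $\psi = 0$ (this is exactly Wiener's lemma applied to the purely atomic spectral measure of a Kronecker-measurable function). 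Everything else is routine manipulation of conditional expectations and Fourier coefficients.
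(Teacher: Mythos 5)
Your proof is correct, and it does diverge from the paper's after the shared first step. Both arguments start identically: apply Lemma~\ref{lem:weak-rel-weak-mix} with $f=g=1_E$ and $B=Y$, then combine with the hypothesis $\nu(E\cap S^nE)=\nu(E)^2$ to get information about $\phi:=\sfE_\nu(1_E\,|\,\zeta^S_1)$. From there the paper argues topologically: because the Kronecker rotation is almost periodic, there is a Bohr set (of positive density) along which $\phi\circ S^n$ returns $\eps$-close to $\phi$ in $L^2$, so $\int\phi\cdot(\phi\circ S^n)\,\d\nu$ comes within $\eps$ of $\|\phi\|_2^2$; intersecting this Bohr set with the density-$1$ set from the lemma then forces $\|\phi\|_1=\|\phi\|_2$, and Cauchy--Schwarz equality makes $\phi$ constant. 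You instead argue spectrally: pass to $\psi=\phi-\nu(E)$, expand in characters of $Z$, identify the autocorrelation $a_n=\sum_\chi|\hat\psi(\chi)|^2\overline{\chi(\a)}^n$, and run Wiener's lemma --- the Ces\`aro average of $|a_n|^2$ is $\sum_\chi|\hat\psi(\chi)|^4$, and since a bounded sequence converging to $0$ in density has Ces\`aro-convergent squares, this sum is $0$, so $\psi=0$. The two arguments are of comparable length and exploit the same underlying fact (pure point spectrum of the Kronecker factor) from dual directions: the paper locates times $n$ where the autocorrelation is \emph{large} and derives a contradiction, while you show the autocorrelation is quantitatively \emph{small in density} and conclude the spectral measure is trivial. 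You are right that the density-$1$ refinement is essential to the Wiener argument --- the plain Ces\`aro average of $a_n$ would vanish trivially --- whereas the paper uses the density-$1$ set only to intersect with a positive-density Bohr set. One minor remark: your interchange of the character sum with the $N\to\infty$ limit in computing $\lim_N\frac1N\sum_n a_n^2$ deserves the one-line justification that it is dominated by the summable array $|\hat\psi(\chi)|^2|\hat\psi(\chi')|^2$; otherwise the argument is complete.
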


\begin{proof}
The degenerate case $B = Y$ of the preceding lemma shows that asymptotically for most $n$ we have
\[\nu(E\cap S^{-n}E) \approx \int_Y \sfE_\nu(E\,|\,\zeta_1^S)\cdot (\sfE_\nu(E\,|\,\zeta_1^S)\circ S^n)\,\d\nu.\]
Since the Kronecker factor $(Z,m_Z,R)$ is a compact system, for any $\eps > 0$ there is some nonempty Bohr set in $\bbZ$ along which the right-hand values above return within $\eps$ of
\[\int_Y \sfE_\mu(E\,|\,\zeta_1^S)^2\,\d\nu = \|\sfE_\mu(E\,|\,\zeta_1^S)\|_2^2.\]
This Bohr set must have positive density and therefore contain a further subset of values of $n$ where our first approximation above is also good.  This implies that for any $\eps > 0$ there are infinitely many $n$ for which
\[\big|\nu(E\cap S^n(E)) - \|\sfE_\mu(E\,|\,\zeta_1^S)\|_2^2\big| < \eps,\]
but on the other hand our assumption on $E$ implies that
\[\nu(E\cap S^n(E)) = \nu(E)^2 = \|\sfE_\mu(E\,|\,\zeta_1^S)\|_1^2\quad \forall n\neq 0.\]
This is possible only if $\|\sfE_\mu(E\,|\,\zeta_1^S)\|_1 = \|\sfE_\mu(E\,|\,\zeta_1^S)\|_2$, which in turn requires that $\sfE_\mu(E\,|\,\zeta_1^S)$ be constant, as required.
\end{proof}

\begin{lem}\label{lem:1D-unif-to-rel-ind}
If $(X,\mu,T_1,T_2)$ is an ergodic $\bbZ^2$-system and $E_1 \in
\S^{(1,0)}$, $E_2 \in \S^{(0,1)}$ are such that $E_i\perp T_j^n(E_i)$ for all
$n\neq 0$ whenever $\{i,j\} = \{1,2\}$, then also $E_1$ (resp. $E_2$) is independent from $\zeta_0^{(0,1)}\vee \zeta_0^{(1,-1)}$ (resp. $\zeta_0^{(1,0)}\vee \zeta_0^{(1,-1)}$).
\end{lem}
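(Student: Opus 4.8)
The plan is to reduce the desired independence of $E_1$ from $\zeta_0^{(0,1)}\vee\zeta_0^{(1,-1)}$ to the hypothesis $E_1\perp T_2^n(E_1)$ via the relative-independence statement already established in Lemma~\ref{lem:joint-dist-of-part-invts}. By that lemma, the three factors $\zeta_0^{(1,0)}$, $\zeta_0^{(0,1)}$, $\zeta_0^{(1,-1)}$ are relatively independent over the triple of sub-factors obtained by intersecting each with the Kronecker factor $\zeta_1^T$. Since $E_1\in\S^{(1,0)}$ is $\zeta_0^{(1,0)}$-measurable, this relative independence tells us that for any $\zeta_0^{(0,1)}$-measurable $A_2$ and any $\zeta_0^{(1,-1)}$-measurable $A_{12}$ we have
\[
\mu(E_1\cap A_2\cap A_{12})=\int_X \sfE(E_1\,|\,\zeta_1^T\wedge\zeta_0^{(1,0)})\cdot\sfE(A_2\,|\,\zeta_1^T\wedge\zeta_0^{(0,1)})\cdot\sfE(A_{12}\,|\,\zeta_1^T\wedge\zeta_0^{(1,-1)})\,\d\mu.
\]
Thus everything is controlled by $\sfE(E_1\,|\,\zeta_1^T\wedge\zeta_0^{(1,0)})$, and it suffices to show that this conditional expectation is the constant $\mu(E_1)$; equivalently, that $E_1$ is independent of $\zeta_1^T$ (since $E_1$ is already $\zeta_0^{(1,0)}$-measurable, independence from $\zeta_1^T$ forces $\sfE(E_1\,|\,\zeta_1^T\wedge\zeta_0^{(1,0)})=\mu(E_1)$).

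The independence of $E_1$ from the Kronecker factor is exactly what Corollary~\ref{cor:self-orthog-implies-orthog-to-Kron} delivers, applied to the ergodic $\bbZ$-system $(X,\mu,T_2)$ and the set $E_1$: the hypothesis $E_1\perp T_2^n(E_1)$ for all $n\neq 0$ is precisely the assumption of that corollary, so $E_1$ is independent of $\zeta_1^{T_2}$, the Kronecker factor of $(X,\mu,T_2)$. The one gap to bridge is that $\zeta_1^{T_2}$ is the Kronecker factor of the single transformation $T_2$, whereas the factor appearing above is $\zeta_1^T$, the Kronecker factor of the full $\bbZ^2$-action. However, one does not actually need the full $\zeta_1^T$: the only relevant information is the joint distribution of $E_1$ with sets measurable in $\zeta_0^{(0,1)}$ and $\zeta_0^{(1,-1)}$, and $\zeta_0^{(0,1)}$ is a factor of the $T_2$-system (its sets are $T_2$-invariant hence certainly $\zeta_1^{T_2}$-measurable after the appropriate identification), while $\zeta_0^{(1,-1)}$ is a factor of the $(T_1T_2^{-1})$-system. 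So the cleaner route is: apply Corollary~\ref{cor:self-orthog-implies-orthog-to-Kron} with $S=T_2$ to conclude $E_1\perp\zeta_1^{T_2}$, then invoke Lemma~\ref{lem:weak-rel-weak-mix} (with $S=T_2$, $B=Y$) together with the observation that $\zeta_0^{(0,1)}$ is refined by $\zeta_1^{T_2}$ — indeed any $T_2$-invariant set is fixed by the Kronecker rotation — to conclude that $E_1$ is independent of $\zeta_0^{(0,1)}$.

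To then fold in the third factor $\zeta_0^{(1,-1)}$, I would rerun the relative-independence identity from Lemma~\ref{lem:joint-dist-of-part-invts}: once we know $\sfE(E_1\,|\,\zeta_1^T\wedge\zeta_0^{(1,0)})$ is constant (equal to $\mu(E_1)$), the three-fold integral above factors as $\mu(E_1)\cdot\mu(A_2\cap A_{12})$ for all choices of $A_2\in\zeta_0^{(0,1)}$ and $A_{12}\in\zeta_0^{(1,-1)}$, which is exactly the statement that $E_1$ is independent of the joined factor $\zeta_0^{(0,1)}\vee\zeta_0^{(1,-1)}$. The argument for $E_2$ is symmetric, exchanging the roles of the indices $1$ and $2$ and using $E_2\perp T_1^n(E_2)$. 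I expect the main obstacle to be purely bookkeeping: making sure the constancy of $\sfE(E_1\,|\,\zeta_1^T\wedge\zeta_0^{(1,0)})$ genuinely follows from $E_1\perp\zeta_1^{T_2}$ rather than from the ostensibly larger $E_1\perp\zeta_1^T$ — but since $E_1$ is $\zeta_0^{(1,0)}$-measurable and the only test sets that matter live in $\zeta_0^{(0,1)}\vee\zeta_0^{(1,-1)}$, which is swallowed by $\zeta_1^{T_2}\vee\zeta_1^{T_1T_2^{-1}}$, the weak-mixing-type input of Lemma~\ref{lem:weak-rel-weak-mix} applied to the single transformations $T_2$ and $T_1T_2^{-1}$ suffices, and no genuinely new estimate is required.
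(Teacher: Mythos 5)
Your approach is the same as the paper's: use Lemma~\ref{lem:joint-dist-of-part-invts} to reduce the claim to the constancy of $\sfE(E_1\,|\,\zeta_1^T\wedge\zeta_0^{(1,0)})$, then supply that constancy from Corollary~\ref{cor:self-orthog-implies-orthog-to-Kron}. Two points deserve care. First, you invoke ``the ergodic $\bbZ$-system $(X,\mu,T_2)$,'' but $T_2$ alone on $(X,\mu)$ is typically \emph{not} ergodic --- the algebra $\zeta_0^{(0,1)}$ of $T_2$-invariant sets is exactly what measures this failure, and it is generally nontrivial in the situations of interest --- so Corollary~\ref{cor:self-orthog-implies-orthog-to-Kron} cannot be applied to $(X,\mu,T_2)$ as stated. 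The fix is to apply it instead to the induced $T_2$-action on the factor target of $\zeta_0^{(1,0)}$, which \emph{is} ergodic because the full $\bbZ^2$-action is, and on which both $E_1$ and every set in $\zeta_1^T\wedge\zeta_0^{(1,0)}$ live; a set in $\zeta_1^T\wedge\zeta_0^{(1,0)}$ is $T_1$-invariant and built from joint eigenfunctions, hence descends to a $T_2$-eigenfunction on this quotient, so it lies in the Kronecker factor of that ergodic $\bbZ$-system and the desired constancy follows. Second, the middle of your argument --- separately deducing $E_1\perp\zeta_0^{(0,1)}$ via Lemma~\ref{lem:weak-rel-weak-mix} and then ``folding in'' $\zeta_0^{(1,-1)}$ --- is unnecessary: $E_1\perp\zeta_0^{(0,1)}$ alone already follows from the first part of Lemma~\ref{lem:joint-dist-of-part-invts} with no uniformity hypothesis at all, and once $\sfE(E_1\,|\,\zeta_1^T\wedge\zeta_0^{(1,0)})$ is known to be constant the relative-independence identity handles the whole join $\zeta_0^{(0,1)}\vee\zeta_0^{(1,-1)}$ in a single step, which is how the paper writes it.
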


\begin{rmk}
For us this is analogous to the way Shkredov uses his Lemma 1 to
estimate the second term in equation (21) in his Theorem 7. \fin
\end{rmk}

\begin{proof}
The second part of Lemma~\ref{lem:joint-dist-of-part-invts} implies
\[\sfE_\mu(E_1\,|\,\zeta_0^{(0,1)}\vee \zeta_0^{(1,-1)}) = \sfE_\mu(\sfE_\mu(E_1\,|\,\zeta_1^T\wedge \zeta_0^{(1,0)})\,|\,\zeta_0^{(0,1)}\vee \zeta_0^{(1,-1)}).\]
Corollary~\ref{cor:self-orthog-implies-orthog-to-Kron} now gives that $\sfE_\mu(E_1\,|\,\zeta_1^T\wedge \zeta_0^{(1,0)})$ is constant, and hence so is the conditional expectation of interest.  The proof for $E_2$ is exactly similar.
\end{proof}

\subsection{The main estimate}\label{subs:main-est}

\begin{proof}[Proof of Proposition~\ref{prop:Shk-main-estimate}]
Define the trilinear form $\l$ on $L^\infty(\mu)^3$ by
\[\l(f_0,f_1,f_2) := \lim_{N\to\infty}\frac{1}{N}\sum_{n=1}^N\int_X f_0\cdot (f_1\circ T_1^n)\cdot (f_2\circ T_2^n)\,\d\mu.\]
(In fact this is the integral of the function $f_0\otimes f_1\otimes f_2$ against a certain three-fold self-joining of the system $(X,\mu,T_1,T_2)$ called the `Furstenberg self-joining'.  We will not use that more elaborate formalism here, but refer the reader to~\cite{Aus--thesis} and the references given there for a detailed explanation, as well as a proof that the limit exists.)

Our assumptions include that $\l(A,A,A) = 0$ (where we have simply written $A$ in place of $1_A$),
but on the other hand by Theorem~\ref{thm:Fberg-struct} we have
\begin{eqnarray*}
\l(A,A,A) &=& \l(\sfE(A\,|\,\pi_0), A,A)\\
&=& \l\big(\sfE(A\,|\,\pi_0) - \mu(A\,|\,E_1\cap E_2)1_{E_1\cap E_2}, A, A\big)\\
&&\quad\quad\quad\quad\quad\quad\quad\quad + \mu(A\,|\,E_1\cap E_2)\cdot\l(E_1\cap E_2,A,A).
\end{eqnarray*}

We now estimate these two terms separately.

\emph{First term}\quad Directly from the definition of $\l$ we deduce that
\begin{eqnarray*}
&&\big|\l\big(\sfE(A\,|\,\pi_0) - \mu(A\,|\,E_1\cap E_2)1_{E_1\cap E_2},A,A\big)\big| \\
&&\leq \l\big(|\sfE(A\,|\,\pi_0) - \mu(A\,|\,E_1\cap E_2)1_{E_1\cap E_2}|,A,A\big) \\
&&\leq \l\big(|\sfE(A\,|\,\pi_0) - \mu(A\,|\,E_1\cap E_2)1_{E_1\cap E_2}|,E_1\cap E_2,E_1\cap E_2\big),
\end{eqnarray*}
where the second inequality uses that these three functions are non-negative
and that $1_A \leq 1_{E_1 \cap E_2}$.  Now another appeal to Theorem~\ref{thm:Fberg-struct} shows that this last upper bound is equal to
\[\l\big(|\sfE(A\,|\,\pi_0) - \mu(A\,|\,E_1\cap E_2)1_{E_1\cap E_2}|,\ \sfE(E_1\cap E_2\,|\,\pi_1),\
\sfE(E_1\cap E_2\,|\,\pi_2)\big).\]
From our hypothesis that $E_1\perp T_2^n(E_1)$ for all nonzero $n$ and Lemma~\ref{lem:1D-unif-to-rel-ind} it follows that $E_2$ is $\pi_2$-measurable whereas $E_1$ is independent from $\pi_2$, and hence that
\[\sfE(E_1\cap E_2\,|\,\pi_2) = \mu(E_1) 1_{E_2},\]
and similarly with the two indices reversed. Given this we can re-write the above term as
\begin{eqnarray*}
&&\mu(E_1)\mu(E_2)\l\big(|\sfE(A\,|\,\pi_0) - \mu(A\,|\,E_1\cap E_2)1_{E_1\cap E_2}|,E_1,E_2\big)\\ &&= \mu(E_1)\mu(E_2)\lim_{N\to\infty}\frac{1}{N}\sum_{n=1}^N\int_X |\sfE(A\,|\,\pi_0) - \mu(A\,|\,E_1\cap E_2)1_{E_1\cap E_2}|\\
&&\quad\quad\quad\quad\quad\quad\quad\quad\quad\quad\quad\quad\quad\quad\quad\quad\quad\quad\quad\quad\quad\cdot 1_{T_1^{-n}(E_1)}\cdot 1_{T_2^{-n}(E_2)}\,\d\mu\\
&&=  \mu(E_1)\mu(E_2)\int_X |\sfE(A\,|\,\pi_0) - \mu(A\,|\,E_1\cap E_2)1_{E_1\cap E_2}|\,\d\mu,
\end{eqnarray*}
where for the second equality we have now used that $E_i$ is $T_i$-invariant and that
\[|\sfE(A\,|\,\pi_0) - \mu(A\,|\,E_1\cap E_2)1_{E_1\cap E_2}|\cdot 1_{E_1}\cdot 1_{E_2} = |\sfE(A\,|\,\pi_0) - \mu(A\,|\,E_1\cap E_2)1_{E_1\cap E_2}|,\]
which in turn holds because $E_1\cap E_2$ is $\pi_0$-measurable while $A \subset E_1\cap E_2$, so that both $\sfE(A\,|\,\pi_0)$ and $1_{E_1\cap E_2}$ are still supported on $E_1\cap E_2$.

This integral (which no longer involves the trilinear form $\l$) may now be identified as
\begin{multline*}
\mu(E_1\cap E_2)^2\|\sfE(A\,|\,\pi_0) - \mu(A\,|\,E_1\cap E_2)1_{E_1\cap E_2}\|_{L^1(\mu(\cdot\,|\,E_1\cap E_2))}\\ \leq \mu(E_1\cap E_2)^2\|\sfE(A\,|\,\pi_0) - \mu(A\,|\,E_1\cap E_2)1_{E_1\cap E_2}\|_{L^2(\mu(\cdot\,|\,E_1\cap E_2))},
\end{multline*}
using the fact that $\S^{(1,0)}$ and $\S^{(0,1)}$ are independent to write $\mu(E_1)\mu(E_2) = \mu(E_1\cap E_2)$ and using H\"older's inequality for the final upper bound.

\emph{Second term}\quad This is much simpler: since $A \subset E_1\cap E_2$ and $E_i$ is $T_i$-invariant we have
\begin{eqnarray*}
\l(E_1\cap E_2,A,A) &=& \lim_{N\to\infty}\frac{1}{N}\sum_{n=1}^N\mu((E_1\cap E_2)\cap T_1^{-n}A\cap T_2^{-n}A)\\
&=& \lim_{N\to\infty}\frac{1}{N}\sum_{n=1}^N\mu(T_1^{-n}E_1\cap T_1^{-n}A\cap T_2^{-n}E_2\cap T_2^{-n}A)\\
&=& \lim_{N\to\infty}\frac{1}{N}\sum_{n=1}^N\mu(T_1^{-n}A\cap T_2^{-n}A)\\
&=& \lim_{N\to\infty}\frac{1}{N}\sum_{n=1}^N\mu(A\cap (T_2T_1^{-1})^{-n}A)\\
&=& \|\sfE_\mu(A\,|\,\zeta_0^{(1,-1)})\|_2^2
\end{eqnarray*}
(using the Mean Ergodic Theorem for the last equality), and by another appeal to H\"older's inequality this is bounded below by
\[\|\sfE_\mu(A\,|\,\zeta_0^{(1,-1)})\|_1^2 = \mu(A)^2 = \mu(A\,|\,E_1\cap E_2)^2\mu(E_1\cap E_2)^2.\]

\emph{Combining the estimates}\quad Using the inequalities just obtained in our original decomposition of $\l(A,A,A)$ we find that
\begin{multline*}
0 = \l(A,A,A) \geq \mu(A\,|\,E_1\cap E_2)^3\mu(E_1\cap E_2)^2\\
 - \|\sfE(A\,|\,\pi) - \mu(A\,|\,E_1\cap E_2)\|_{L^2(\mu(\cdot\,|\,E_1\cap E_2))}\cdot \mu(E_1\cap E_2)^2,
\end{multline*}
so re-arranging gives the desired result.
\end{proof}

\subsection{Shkredov's version of the density increment}

We can now present Shkredov's main increment result (which corresponds roughly to the conjunction of Proposition~\ref{prop:erg-dens-inc} and Corollary~\ref{cor:erg-dens-inc} in the one-dimensional setting):

\begin{prop}\label{prop:Shk-inc}
There is a nondecreasing function $c:(0,1] \to (0,1]$ which is bounded away from $0$ on compact subsets of $(0,1]$ and has the
following property.  If $(X \supset E_1\cap E_2\supset A,\mu,T_1,T_1)$ is such that
\begin{itemize}
\item[(i)] $\mu(A) > 0$,
\item[(ii)] the return-set of $A$ contains no nontrivial corners, and
\item[(iii)] $E_1\perp T_2^{-n}(E_1)$ for all $n\neq 0$ and similarly for $E_2$,
\end{itemize}
and if we set $\delta:= \mu(A\,|\,E_1\cap E_2)$, then there exists
another augmented process
\[(X'\supset E'_1\cap E'_2\supset A',\mu',T'_1,T'_2)\]
having the analogous properties (i-iii) and such that
\[\mu'(A'\,|\,E'_1 \cap E'_2) \geq \delta + c(\delta).\]
\end{prop}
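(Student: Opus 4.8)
The plan is to run the density-increment machinery developed above exactly once, starting from Proposition~\ref{prop:Shk-main-estimate}. Given the augmented process $(X\supset E_1\cap E_2\supset A,\mu,T_1,T_2)$ satisfying (i--iii), we may first (by Theorem~\ref{thm:Fberg-struct}) pass to a pleasant extension; the hypotheses (i--iii) are all preserved by extension, since conditional independence of $E_1$ from $T_2^n(E_1)$ and the vanishing of the corner count are both statements about joinings that pull back. Thus we may assume the process is pleasant and apply Proposition~\ref{prop:Shk-main-estimate}, which yields
\[
\|\sfE_\mu(A\,|\,\pi_0) - \delta\|_{L^2(\mu(\cdot\,|\,E_1\cap E_2))} \geq \delta^3,
\]
where $\delta = \mu(A\,|\,E_1\cap E_2)$ and $\pi_0 = \zeta_0^{(1,0)}\vee\zeta_0^{(0,1)}$. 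By Lemma~\ref{lem:joint-dist-of-part-invts}, the target of $\pi_0$ is (relative to $\mu(\cdot\,|\,E_1\cap E_2)$, or after a harmless normalization, relative to $\mu$) a Cartesian product system $(Y_1\times Y_2,\nu_1\otimes\nu_2, S_2\times\mathrm{id},\mathrm{id}\times S_1)$ with $S_i$ ergodic. So $\sfE_\mu(A\,|\,\pi_0)$ is a function $h(y_1,y_2)$ on this product, and the displayed inequality says $h$ has $L^2$-oscillation at least $\delta^3$ about its mean $\delta$ on $E_1\cap E_2$.

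Next I would use this oscillation to locate a ``rectangle'' of $T_1$- and $T_2$-invariant sets on which the relative density of $A$ is increased. Concretely: because $h$ deviates from $\delta$ by at least $\delta^3$ in $L^2$, a Chebyshev/Paley–Zygmund argument produces a level set of positive measure on which $h \geq \delta + c_1(\delta)$ for some $c_1(\delta) \gtrsim \delta^{?}$ bounded away from zero on compacts. One then approximates this level set from inside by a finite union of products $B_1\times B_2$ of positive-measure pieces with $B_i$ measurable in the $i$-th coordinate of $Y_1\times Y_2$; pulling these back through $\pi_0$ gives sets of the form $D_1\cap D_2$ with $D_1\in\S^{(1,0)}$, $D_2\in\S^{(0,1)}$, on which $\mu(A\,|\,D_1\cap D_2) \geq \delta + c_1(\delta)$. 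Finally intersect with the ambient $E_i$: set $E_i' := E_i\cap D_i$ (still $T_i$-invariant), $A' := A\cap E_1'\cap E_2' = A\cap D_1\cap D_2$, and keep $(X',\mu',T_1',T_2') := (X,\mu,T_1,T_2)$ (or the pleasant extension thereof, restricted/renormalized as needed). Properties (i) and (ii) are immediate — $A'\subset A$ so the corner count for $A'$ still vanishes and $\mu'(A') > 0$ once we have chosen the level set to meet $A$ in positive measure — and $\mu'(A'\,|\,E_1'\cap E_2') \geq \delta + c(\delta)$ by construction.

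The main obstacle is reestablishing property (iii): we need $E_i' = E_i\cap D_i$ to satisfy $E_i'\perp T_j^n(E_i')$ for $j\neq i$ and all $n\neq 0$. This is exactly the point where Shkredov has to work, and it is why he insists the superset $F_1\times F_2$ carry uniformity of its own. Here $D_1$ is $T_1$-invariant and is built from a coordinate of the product system $Y_1\times Y_2$, i.e.\ it is $\zeta_0^{(1,0)}$-measurable; by Lemma~\ref{lem:joint-dist-of-part-invts} the factor $\zeta_0^{(1,0)}$ is independent of $\zeta_0^{(0,1)}$ and relatively independent of the others over the Kronecker factor, so $D_1$ is already very weakly correlated with its own $T_2$-shifts — indeed $\sfE_\mu(D_1\,|\,\zeta_1^T\wedge\zeta_0^{(1,0)})$ should be controllable — and the argument of Corollary~\ref{cor:self-orthog-implies-orthog-to-Kron} run in reverse (together with the hypothesis $E_1\perp T_2^n(E_1)$ which we still have) should force $E_1' = E_1\cap D_1$ to inherit the self-orthogonality, at the cost of possibly first enlarging $D_1$ slightly or choosing the partition approximating the level set to be subordinate to the Kronecker factor. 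Making this last step clean — getting genuine independence $E_1'\perp T_2^n(E_1')$ rather than just approximate decorrelation, uniformly in $n$ — is the delicate part of the proof and will occupy most of the detailed argument; everything else is bookkeeping with conditional expectations and the convex-combination trick already used in the proof of Proposition~\ref{prop:erg-dens-inc}.
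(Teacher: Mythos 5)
Your Step 1 matches the paper: pass to a pleasant extension, apply Proposition~\ref{prop:Shk-main-estimate} to get $L^2$-oscillation of $\sfE(A\,|\,\pi_0)$, use a Chebyshev-type argument to find a positive-measure $\pi_0$-measurable set on which $\mu(A\,|\,\cdot)$ is boosted, and approximate it by an intersection $D_1\cap D_2$ with $D_i$ partially invariant. But the crucial Step 2 --- recovering hypothesis (iii) for the new $E_i'$ --- is where the proposal breaks down, and the fix you sketch does not work.

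You propose to keep the same system and set $E_i' := E_i \cap D_i$, arguing that ``the argument of Corollary~\ref{cor:self-orthog-implies-orthog-to-Kron} run in reverse'' should force $E_1'\perp T_2^n(E_1')$. That corollary says self-orthogonality $\Rightarrow$ independence from the Kronecker factor; the converse is simply false. A set independent of the Kronecker factor of an ergodic $\bbZ$-system is almost never self-orthogonal under all shifts (take any weakly mixing system: every set is independent of the trivial Kronecker factor, but generic sets are not self-orthogonal). So independence of $D_1$ from $\zeta_1^T$, even if you had it, would give you nothing, and there is no way to choose or massage $D_1$ within the original system so that $E_1\cap D_1$ is exactly self-orthogonal under every $T_2$-shift. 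The paper flags exactly this failure --- ``these sets $F_i$ may not satisfy $F_i\perp T^nF_i$'' --- and its resolution is qualitatively different: one abandons the ambient system entirely and constructs a \emph{new} $\bbZ^2$-system $(X',\mu',T_1',T_2')$ on a symbolic space. One conditions $\mu$ on a small cell $\zeta^{-1}U$ of a partition of the Kronecker factor, chooses a dilation parameter $r$ from a Bohr set (using Lemma~\ref{lem:weak-rel-weak-mix} to make $\mu(F_i\cap T_j^{-kr}F_i\,|\,\zeta^{-1}U)$ close to $\mu(F_i\,|\,\zeta^{-1}U)^2$ for all $|k|\le K$), pushes forward along an $r$-dilated orbit map into $(\{0,1\}^3)^{\bbZ^2}$, and finally takes a vague limit as $\eps\downarrow 0$ and $K\to\infty$. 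It is only in that limit that the independence (iii) and the exact $T_i'$-invariance of $E_i'$ hold. Without this (or some equivalent) new-system construction, your argument cannot close.

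A smaller point: you suggest ``restricting/renormalizing'' $\mu$, but conditioning on $D_1\cap D_2$ destroys $T_1,T_2$-invariance of the measure (since $D_1\cap D_2$ is invariant under neither $T_1$ nor $T_2$ alone), so this is not an option either. The symbolic-space detour the paper takes is not bookkeeping; it is the substantive idea required here.
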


\noindent\emph{Remark.}\quad Shkredov's argument
does \emph{not} give any effective control over the size of the sets
$E'_i$ in terms of the $E_i$ --- in particular, it could happen that they are very much smaller --- but the point is that this is not needed. \fin

\begin{proof}
This breaks naturally into two steps.

\emph{Step 1}\quad Extending $(X,\mu,T_1,T_2)$ and lifting $A$ and the $E_i$ if necessary, we may assume the system is pleasant. Now by Proposition~\ref{prop:Shk-main-estimate} conditions (i) and (ii) imply that
\[\|\sfE(A\,|\,\pi_0) - \mu(A\,|\,E_1\cap E_2)\|_{L^2(\mu(\cdot\,|\,E_1\cap E_2))} \geq \delta^3,\]
and hence there is some non-negligible $\pi_0$-measurable set $F$ such that
\[\mu(A\,|\,F) > \delta + \delta^3/2.\]
Moreover, since $\pi_0$ is generated by $\zeta_0^{(1,0)}$ and $\zeta_0^{(0,1)}$, after approximating this $F$ by a disjoint union of intersections of $T_1$- or $T_2$-invariant sets we may assume that it is itself of the form $F_1\cap F_2$ for some $F_1 \in \zeta_0^{(1,0)}$, $F_2 \in \zeta_0^{(0,1)}$.

Naively we should like to replace $A \subset E_1\cap E_2$ with $A\cap F_1\cap F_2 \subset F_1\cap F_2$, but these sets $F_i$ may not satisfy $F_i\perp T^n F_i$ for $n\neq 0$.  We resolve this by another conditioning and a vague limit construction.  Note at this point that this selection of the sets $F_i$ will be responsible for our lack of control over $\mu'(E_i')$ in terms of $\mu(E_i)$.

\emph{Step 2}\quad Let \[X' := (\{0,1\}\times \{0,1\}\times \{0,1\})^{\bbZ^2}\]
with its product Borel space structure, let $T_1', T_2'$ be the two coordinate-shifts on this space, and let $E_1'$, $E_2'$ and $A'$ be the three obvious time-zero cylinder sets of $X'$:
\[E_1' := \{(\o^1_\bf{n},\o^2_\bf{n},\o^3_\bf{n})\in X':\ \o^1_{\bs{0}} = 1\}\quad\quad \hbox{and similarly.}\]

We will show that for any $\eps > 0$ and $K\geq 1$ there is a probability measure $\nu$ on $X'$ such that
\begin{itemize}
\item $\nu$ is approximately invariant: $|\nu(C) - \nu((T_i')^{-1}C)| < \eps$ for any $C \subset X'$ and $i=1,2$,
\item $\nu(E'_1\cap E'_2) \geq (\delta^3/20)\mu(F_1\cap F_2)$,
\item $\nu(E'_i\triangle T'_iE'_i) = 0$ for $i=1,2$,
\item $|\nu(E'_i\triangle (T'_j)^{-k}E'_i) - \nu(E_i')^2| < \eps$ for all nonzero $-K \leq k \leq K$ for $\{i,j\} = \{1,2\}$, and
\item $\nu(A'\,|\,E_i'\cap E_2') \geq \delta + \delta^3/2$.
\end{itemize}

Given this, we may take a sequence of such measures as $\eps \downarrow 0$ and $K\to\infty$ and let $\mu'$ be a vague limit of some subsequence to obtain an augmented process
\[(X'\supset E'_1\cap E'_2\supset A',\mu',T_1',T_2')\]
having all the desired properties. The $T_i'$-invariance of $\mu'$ follows from the approximate invariance of the measures $\nu$, and the $T_i'$-invariance of $E'_i$ is only up to a $\mu'$-negligible set, but this may then be repaired by replacing $E'_i$ with $\bigcup_n (T_i')^nE_i'$, which differs from $E_i'$ only by a $\mu'$-negligible set.  The second of the above points ensures that the limit $\mu'$ is non-trivial insofar as $\mu'(A'),\mu'(E_1'\cap E_2') > 0$.

Now fix $\eps$ and $K$.  To obtain such a $\nu$, let $(Z,m,R_1,R_2)$ be a compact group rotation isomorphic to the Kronecker factor of $(X,\mu,T_1,T_2)$ with factor map $\zeta_1^T =: \zeta:X\to Z$, and let $\cal{U}$ be a Borel partition of $Z$ into sufficiently small pieces that
\[\big\|\sfE_\mu(F_i\,|\,\zeta)|_U - \mu(F_i\,|\,U)\big\|_{L^2(\mu(\cdot\,|\,U))} < \eps/4\]
for all $U \in \cal{U}\setminus \cal{U}_{\rm{bad}}$ where $m\big(\bigcup\cal{U}_{\rm{bad}}\big) < (\delta^3/20) \mu(F_1\cap F_2)$.

Considering the convex combination
\[\mu(A\,|\,F_1\cap F_2) = \sum_{U \in \cal{U}}\frac{\mu(F_1\cap F_2\cap \zeta^{-1}U)}{\mu(F_1\cap F_2)}\mu(A\cap \zeta^{-1}U\,|\,F_1\cap F_2\cap \zeta^{-1}U),\]
the terms indexed by $\cal{U}_{\rm{bad}}$ must contribute very little (because their sum cannot be more than $\delta^3/20$ if we estimate by simply ignoring the factors of $\mu(A\cap \zeta^{-1}U\,|\,F_1\cap F_2\cap \zeta^{-1}U) \leq 1$). Similarly, the terms for which
\[\mu(F_1\cap F_2\cap \zeta^{-1}U\,|\,F_1\cap F_2) < (\delta^3/20)m(U)\]
must also contribute very little (their sum is also less than $\delta^3/20$).  Therefore there must be some $U \in \cal{U}\setminus \cal{U}_{\rm{bad}}$ for which 
\[\mu(F_1\cap F_2\cap \zeta^{-1}U\,|\,F_1\cap F_2) \geq (\delta^3/20)m(U)\]
and
\[\mu(A\cap \zeta^{-1}U\,|\,F_1\cap F_2\cap \zeta^{-1}U) \geq \delta + \delta^3/4.\] Using Bayes' formula, the first of these inequalities implies that
\begin{eqnarray*}
\mu(F_1\cap F_2\,|\,\zeta^{-1}U) &=& \mu(F_1\cap F_2\cap \zeta^{-1}U\,|\,F_1\cap F_2)\cdot \frac{\mu(F_1\cap F_2)}{m(U)}\\
&\geq& (\delta^3/20)\mu(F_1\cap F_2).
\end{eqnarray*}

Now let $V \subset \bbZ$ be the Bohr set
\[\{n \in \bbZ:\ m(U\triangle R_1^nU) < \eps m(U)/2\ \hbox{and}\ m(U\triangle R_2^nU) < \eps m(U)/2\}.\]
This is nontrivial because the rotation orbit $z\mapsto 1_{z+U}$ is continuous from $Z$ to $L^2(m)$, and so $V$ has some (perhaps very small) positive density in $\bbZ$.  In view of this positive density, Lemma~\ref{lem:weak-rel-weak-mix} implies that each of the sets
\begin{eqnarray*}
&&V_{j,k} := \\
&&\Big\{n \in V:\ \Big|\mu(F_i\cap T_j^{-kn}F_i\,|\,\zeta^{-1}U)\\
&&\quad\quad\quad\quad\quad\quad\quad\quad\quad - \frac{1}{m(U)}\int_U\sfE_\mu(F_i\,|\,\zeta)\cdot\sfE_\mu(T_j^{-kn}F_i\,|\,\zeta)\,\d m\Big| \leq \eps/2\Big\}
\end{eqnarray*}
still has relative density $1$ inside $V$ for any $k\neq 0$ and $j=1$ or $2$, because the whole set $\bbZ\setminus V_{j,k}$ has density zero.  Hence we may choose some $r \in V$, $r\geq 1$ that lies in every $V_{j,k}$ for $j=1,2$ and $k \in \{-K,-K+1,\ldots,K\}\setminus\{0\}$. On the other hand, the approximation that defines the members of $\cal{U}\setminus \cal{U}_{\rm{bad}}$ and the approximate return of $U$ to itself under $R_j^n$ for $n \in V$ imply that
\begin{multline*}
\frac{1}{m(U)}\int_U\sfE_\mu(F_i\,|\,\zeta)\cdot\sfE_\mu(T_j^{-kr}F_i\,|\,\zeta)\,\d m\\ \approx \frac{1}{m(U)}\int_U\mu(F_i\,|\,\zeta^{-1}U)\cdot\sfE_\mu(T_j^{-kr}F_i\,|\,\zeta)\,\d m\\
= \mu(F_i\,|\,\zeta^{-1}U)\cdot \mu(F_i\,|\,T_j^{kr}\zeta^{-1}U) \approx \mu(F_i\,|\,\zeta^{-1}U)^2
\end{multline*}
for all nonzero $-K \leq k \leq K$, where the error incurred is at most $\eps/4 + \eps/4 = \eps/2$.

Now consider the map
\[\varphi:X\to X': x\mapsto \big(1_{F_1}(T_2^{rn_2}x),1_{F_2}(T_1^{rn_1}x),1_{A\cap F_1\cap F_2}(T_1^{rn_1}T_2^{rn_2}x)\big)_{(n_1,n_2) \in \bbZ^2}\]
and let $\nu$ be the image measure $\varphi_\#\mu(\,\cdot\,|\,\zeta^{-1}U)$ on $X'$.  We will show that this has the five desired properties:
\begin{itemize}
\item approximate invariance of $\nu$ follows from approximate invariance of $U$ along $V$:
\begin{eqnarray*}
|\nu(C) - \nu((T_i')^{-1}C)| &=&  |\mu(\varphi^{-1}C\,|\,\zeta^{-1}U) - \mu(T_i^{-r}\varphi^{-1}C\,|\,\zeta^{-1}U)|\\
&=& \frac{\mu(\varphi^{-1}C\cap (\zeta^{-1}U\triangle T_i^r\zeta^{-1}U))}{\mu(\zeta^{-1}U)}\\
&\leq& \eps/2 < \eps
\end{eqnarray*}
for any $C \subset X'$;
\item a simple calculation gives
\[\nu(E_1'\cap E_2') = \mu(F_1\cap F_2\,|\,\zeta^{-1}U),\]
and this is at least $(\delta^3/20)\mu(F_1\cap F_2)$ by our choice of $U$;
\item similarly,
\[\nu(E'_i\triangle T_i'E_i') = \mu(F_i\triangle T_iF_i\,|\,\zeta^{-1}U) = 0\]
for $i=1,2$;
\item for any nonzero $-K \leq k \leq K$ we have
\[\nu(E_i'\cap (T_j')^{-k}E'_i) = \mu(F_i\cap T_j^{-kr}F_i\,|\,\zeta^{-1}U),\]
and by our selection of $r$ this is within $\eps/2$ of
\[\frac{1}{m(U)}\int_U\sfE_\mu(F_i\,|\,\zeta)\cdot\sfE_\mu(T_j^{-kn}F_i\,|\,\zeta)\,\d m,\]
which in turn is within $\eps/2$ of
\[\mu(F_i\,|\,\zeta^{-1}U)^2 = \nu(E_i')^2,\]
giving the required estimate;
\item lastly, our choice of $U$ also guarantees that
\[\nu(A'\,|\,E_1'\cap E_2') = \mu(A\cap \zeta^{-1}U\,|\,F_1\cap F_2\cap \zeta^{-1}U) \geq \delta + \delta^3/4,\]
as required.
\end{itemize}
This completes the proof with $c(\delta) := \delta^3/4$.
\end{proof}

\begin{rmk}
The two steps above can also be loosely identified with two steps in
Shkredov's work. The first is similar to the conjunction of Lemma 11
and Proposition 3 in Section 3 of~\cite{Shk06}, whose use appears at
the beginning of the proof of Theorem 4.  The second, rather more
involved, amounts to Corollary 1 and the various auxiliary results
needed to reach it in Section 4 of~\cite{Shk06}, which then underpin
the second step of each increment in the proof of Shkredov's Theorem
4. \fin
\end{rmk}

\begin{proof}[Proof of Theorem~\ref{thm:Shk}]
This now proceeds almost exactly as for Theorem~\ref{thm:MRT}.

Suppose there exists an augmented process $(X\supset E_1\cap E_2 \supset A,\mu,T_1,T_2)$ such that $\mu(A) > 0$ and hence $\mu(A\,|\,E_1\cap E_2) =: \delta_0 > 0$, $E_i \perp T_j^n(E_i)$ for all $n\geq 0$, and for which
\[\frac{1}{N}\sum_{n=1}^N\mu(A\cap T_1^{-n}A\cap T_2^{-n}A) \to 0.\]
In particular, if $(X\supset A,\mu,T_1,T_2)$ is a process violating Theorem~\ref{thm:Shk}, then $(X\supset X\cap X\supset A,\mu,T_1,T_2)$ is an augmented process with these properties.

From these data one can construct another augmented process $(Y\supset G_1\cap G_2\supset B,\nu,S_1,S_2)$ such that $\nu(B) = \mu(A)$, $\nu(G_i) = \mu(E_i)$ and this new process actually has no corners in its return set.  This construction proceeds in exact analogy with Step 1 in the proof Theorem~\ref{thm:multiMRT} from Corollary~\ref{cor:erg-dens-inc}: the initial process is transferred to the symbolic space
\[Y := (\{0,1\}\times \{0,1\}\times \{0,1\})^{\bbZ^2},\]
where now the three copies of $\{0,1\}$ above the coordinate $(n_1,n_2)$ receive the indicator functions of $1_{E_1}\circ T_1^{n_1}T_2^{n_2}$, $1_{E_2}\circ T_1^{n_1}T_2^{n_2}$ and $1_A\circ T_1^{n_1}T_2^{n_2}$ respectively; and then averaging over dilations constructs a new shift-invariant measure on this symbolic space that retains the properties of the original system but actually has no corners in its return set.  A quick check shows that if $G_1$, $G_2$ and $B$ denote the one-dimensional cylinder sets defined by the three different $\{0,1\}$-valued coordinates above $(0,0)$ in $Y$, then the $G_i$ retain the property of $\nu$-a.s. invariance under $S_i$ and also the property that $G_i\perp S_j^n(G_i)$ for all $n\neq 0$ (because the measure $\nu(G_i\cap S_j^n(G_i))$ is obtained as an average over $m$ of $\mu(E_i\cap T_j^{nm}(E_i))$, and these are all equal to $\mu(E_i)^2 = \nu(G_i)^2$ by assumption).

Now implementing Proposition~\ref{prop:Shk-inc}, one can construct from $(Y\supset G_1\cap G_2\supset B,\nu,S_1,S_2)$ a new augmented process $(X' \supset E_1'\cap E'_2 \supset A',\mu',T'_1,T'_2)$ which still has all the properties (i--iii) and for which $\mu(A'\,|\,E_1'\cap E'_2)\geq \delta_0 + c(\delta_0)$.  Since $c$ is uniformly positive on $[\delta_0,1]$, after iterating this construction finitely many times we obtain an example of an augmented process for which this relative density is greater than $1$, a contradiction.
\end{proof}

\begin{rmk}
The above treatment bears comparison with how Shkredov assembles the various components of the proof of his main result, Theorem 4, in~\cite{Shk06}. \fin
\end{rmk}

\section{Further discussion}\label{sec:2Dprobs}

Theorem~\ref{thm:Shk} remains the most elaborate higher-dimensional case of Theorem~\ref{thm:multiSzem} to be successfully proved using a density-increment argument, or to be given bounds that improve over the hypergraph-regularity proofs of the general theorem obtained in~\cite{Gow07} and~\cite{NagRodSch06}.  Perhaps the most obvious obstruction to further progress is that the various `inverse theorems' that are known for the relevant notions of uniformity remain incomplete.  However, in the ergodic-theoretic world these correspond to `characteristic factor' theorems such as Theorem~\ref{thm:Fberg-struct}, and recent work has in fact taken these a little further.  The following result appears (in a slightly more general form) as Theorem 1.1 in~\cite{Aus--lindeppleasant2}, where it is used for a different purpose.

\begin{thm}\label{thm:super-Fberg-struct}
Any ergodic $\bbZ^2$-system $(X^\circ,\mu^\circ,T_1^\circ,T_2^\circ)$ admits an ergodic extension
\[\pi:(X,\mu,T_1,T_2)\to (X^\circ,\mu^\circ,T^\circ_1,T_2^\circ)\]
with the property that
\begin{multline*}
\frac{1}{N}\sum_{n=1}^N\int_X f_0\cdot (f_1\circ T_1^n)\cdot (f_2\circ T_2^n)\cdot (f_3\circ T_1^nT_2^n)\,\d\mu\\
\sim \frac{1}{N}\sum_{n=1}^N\int_X \sfE(f_0\,|\,\pi_0)\cdot (\sfE(f_1\,|\,\pi_1)\circ T_1^n)\cdot (\sfE(f_2\,|\,\pi_2)\circ T_2^n)\cdot (\sfE(f_3\,|\,\pi_3)\circ T_1^nT_2^n)\,\d\mu
\end{multline*}
in $L^2(\mu)$ as $N\to\infty$ for any $f_0,f_1,f_2,f_3 \in L^\infty(\mu)$, where
\begin{eqnarray*}
\pi_0 = \pi_3 &:=& \zeta_0^{(1,0)}\vee \zeta_0^{(0,1)}\vee \zeta_0^{(1,1)}\vee \zeta_{2,\nil}^T\\
\pi_1 = \pi_2 &:=& \zeta_0^{(1,0)}\vee \zeta_0^{(1,-1)}\vee \zeta_0^{(0,1)}\vee \zeta_{2,\nil}^T,
\end{eqnarray*}
and where $\zeta_{2,\nil}^T$ denotes a factor generated by an inverse limit of a sequence of actions of $\bbZ^2$ by two-step nilrotations.
\end{thm}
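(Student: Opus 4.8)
The plan is to prove Theorem~\ref{thm:super-Fberg-struct} by the same two-part strategy that underlies Theorem~\ref{thm:Fberg-struct}: first locate, by a van der Corput and PET-type reduction, a list of factors that are \emph{partially characteristic} for the four-term average on an arbitrary ergodic $\bbZ^2$-system, and then pass to an extension on which these factors simultaneously take the concrete shape displayed in the statement. As in the constructions of pleasant and isotropized extensions in~\cite{Aus--nonconv,Aus--newmultiSzem}, the extension is built as an inverse limit of a transfinite tower of extensions, each adjoined to ``repair'' some failure of the relevant conditional expectations to be well-behaved; the point of the construction is that it stabilises at an ergodic system that is \emph{sated} relative to the relevant class of factors, meaning that no further extension alters the conditional expectation of a bounded function onto any of them.

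For the reduction itself one applies the van der Corput inequality in the parameter $n$ to the difference $\frac{1}{N}\sum_{n=1}^N\int_X f_0\cdot(f_1\circ T_1^n)\cdot(f_2\circ T_2^n)\cdot(f_3\circ T_1^nT_2^n)\,\d\mu$ and the analogous average with $f_i$ replaced by $\sfE$ of $f_i$ onto the claimed factor. A single such step replaces, say, $f_3\circ T_1^nT_2^n$ by a product $(f_3\circ T_1^nT_2^n)\cdot(\bar f_3\circ T_1^{n+h}T_2^{n+h})$ averaged over an auxiliary shift $h$, and after re-indexing one is left with an average of the same shape but with one fewer free function, now carried out in $(X\times X,\mu\otimes\mu)$ (or a suitable joining) relative to the diagonal. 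Organising these reductions by PET induction peels the box configuration $\{(0,0),(1,0),(0,1),(1,1)\}$ down to one-parameter nonconventional averages along the pairwise difference directions $(1,0)$, $(0,1)$, $(1,1)$ and $(1,-1)$, which force the replacement of $f_1,f_2,f_3$ by their conditional expectations onto $\zeta_0^{(1,0)}$, $\zeta_0^{(0,1)}$, $\zeta_0^{(1,1)}$, $\zeta_0^{(1,-1)}$ as appropriate, exactly as in Theorem~\ref{thm:Fberg-struct}. Because this configuration has one more ``level'' than the corner configuration, however, one of the intermediate one-parameter averages that survives the induction is of genuinely quadratic (length-three) type, and the Host--Kra machinery behind Theorem~\ref{thm:HK} then shows that a degree-two uniformity seminorm controls it; on the sated extension that seminorm is witnessed by a $\bbZ^2$-action by two-step nilrotations, which is precisely the factor $\zeta_{2,\nil}^T$. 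Without passing to the extension one would only obtain control by a more abstract degree-two characteristic factor, which is why the theorem is stated for an extension rather than the original system.

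The main obstacle is the construction and the ergodicity of the extension: showing that the transfinite satedness procedure can be run simultaneously for all of $\zeta_0^{(1,0)}$, $\zeta_0^{(0,1)}$, $\zeta_0^{(1,1)}$, $\zeta_0^{(1,-1)}$ and $\zeta_{2,\nil}^T$, that it terminates, and that the resulting inverse limit is still ergodic (the usual ergodic-decomposition and relative-product arguments must be pushed through the whole tower). This is the step with no easy finitary shadow --- it is the ergodic-theoretic counterpart of the still-incomplete inverse theory for the relevant directional uniformity norms --- and it is where essentially all of the work of~\cite{Aus--lindeppleasant2} is concentrated. By contrast, once such an extension is available, the van der Corput and PET bookkeeping (tracking which off-diagonal terms arise from each Cauchy--Schwarz duplication and checking that each vanishes unless the inserted function is measurable over the claimed factor) is routine if lengthy, so we only summarise it here and refer to Theorem 1.1 of~\cite{Aus--lindeppleasant2} for the complete argument.
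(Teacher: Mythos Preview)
The paper does not prove this theorem at all: it is simply stated in Section~\ref{sec:2Dprobs} with the remark that it ``appears (in a slightly more general form) as Theorem 1.1 in~\cite{Aus--lindeppleasant2}'', and no argument is given. Your proposal is therefore not being compared against a proof in the paper but against a bare citation; since your sketch likewise ends by deferring the substantive work to Theorem~1.1 of~\cite{Aus--lindeppleasant2}, the two treatments are effectively aligned, and your additional outline of the van der Corput/PET reduction and the satedness construction is consistent with what that reference actually does.
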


Once again, these $\pi_i$ are referred to as the `characteristic' factors for these multiple averages.

Moreover, a relatively simple extension of Lemma~\ref{lem:joint-dist-of-part-invts} shows that the four factors $\zeta_0^{(1,0)}$, $\zeta_0^{(0,1)}$, $\zeta_0^{(1,1)}$ and $\zeta_0^{(1,-1)}$ that appear above are relatively independent over their further intersections with $\zeta_{2,\nil}^T$ (see Proposition 5.3 in~\cite{Aus--lindeppleasant2}).  Theorem~\ref{thm:super-Fberg-struct} and this second result are both known special cases of a general conjecture on the joint distributions of partially invariant factors of $\bbZ^d$-systems, which may be found formulated carefully in Section 6 of~\cite{Aus--thesis} and which suggests that an inverse theory for all higher-dimensional notions of uniformity generalizing the Gowers norms will ultimately be available.

Theorem~\ref{thm:super-Fberg-struct} itself bears on the special case of multiple recurrence asserting that 
\[\mu(A) > 0\quad\Rightarrow \quad \lim_{N\to\infty}\frac{1}{N}\sum_{n=1}^N\mu(A\cap T_1^{-n}A\cap T_2^{-n}A\cap T_1^{-n}T_2^{-n}A) > 0,\]
which in the finitary world corresponds to finding squares in dense subsets of $\bbZ^2$.  The above structural results offer hope that some analog of Shkredov's density-increment approach may be possible through the study of pleasant augmented processes of the form
\[(X\supset E_1\cap E_2\cap E_3\cap E_4 \supset A,\mu,T_1,T_2)\]
where $E_1$, $E_2$, $E_3$ and $E_4$ are measurable with respect to $\zeta_0^{(1,0)}$, $\zeta_0^{(0,1)}$, $\zeta_0^{(1,1)}$ and $\zeta_0^{(1,-1)}$ respectively. Of course, more ideas would still be needed to give a new density-increment proof of this instance of multiple recurrence, even in the infinitary setting of ergodic theory.  For example, Proposition~\ref{prop:Shk-main-estimate} must be replaced with some more complicated estimate, and then arguments in the previous section which used some conditioning on the Kronecker factor would presumably be replaced by conditioning on $\zeta_{2,\nil}^T$, which can have much more complicated behaviour.

Another interesting issue on which ergodic theory can shed some light concerns the difference between the problems of proving multiple recurrence for the above averages and for the averages
\[\frac{1}{N}\sum_{n=1}^N\mu(A\cap T_1^{-n}A\cap T_2^{-n}A\cap T_3^{-n}A)\]
arising from a $\bbZ^3$-system $(X,\mu,T_1,T_2,T_2)$.  We offer only a very informal discussion of this here, since precise results on these more complex problems are still in their infancy.  In the finitary world, these latter averages correspond to finding three-dimensional corners in dense subsets of $\bbZ^3$, rather than squares in $\bbZ^2$.  Since a triple of the form $(T_1,T_2,T_1T_2)$ does formally generate an action of $\bbZ^3$, it is clear that multiple recurrence for these $\bbZ^3$-system averages is at least as strong as its counterpart for the averages of Theorem~\ref{thm:super-Fberg-struct}.  However, the identification of characteristic factors for the case of $\bbZ^3$-systems is also apparently simpler: the main result of~\cite{Aus--newmultiSzem} shows that, after passing to a suitable extension if necessary, one has
\begin{multline*}
\frac{1}{N}\sum_{n=1}^N\int_X f_0\cdot (f_1\circ T_1^n)\cdot (f_2\circ T_2^n)\cdot (f_3\circ T_3^n)\,\d\mu\\
\sim \frac{1}{N}\sum_{n=1}^N\int_X \sfE(f_0\,|\,\pi'_0)\cdot (\sfE(f_1\,|\,\pi'_1)\circ T_1^n)\cdot (\sfE(f_2\,|\,\pi'_2)\circ T_2^n)\cdot (\sfE(f_3\,|\,\pi'_3)\circ T_3^n)\,\d\mu
\end{multline*}
with
\[\pi'_0 := \zeta_0^{(1,0,0)}\vee \zeta_0^{(0,1,0)}\vee \zeta_0^{(0,0,1)}, \quad\quad\quad \pi'_1 := \zeta_0^{(1,0,0)}\vee \zeta_0^{(1,-1,0)}\vee \zeta_0^{(1,0,-1)},\]
\[\pi'_2 := \zeta_0^{(1,-1,0)}\vee \zeta_0^{(0,1,0)}\vee \zeta_0^{(0,1,-1)} \quad\hbox{and}\quad \pi'_3 := \zeta_0^{(1,0,-1)}\vee \zeta_0^{(0,1,-1)}\vee \zeta_0^{(0,0,1)},\]
and these are the minimal factors with this property. These factors are `simpler' in that they involve only partially-invariant factors, and not compact group rotations or nilsystems.  The fact that some of the ingredients needed in Theorem~\ref{thm:super-Fberg-struct} no longer appear here does not contradict the fact that a triple such as $(T_1,T_2,T_1T_2)$ generates a $\bbZ^3$-system, because after passing to a suitable extension the algebraic relations among the generators of this $\bbZ^3$-system will usually be lost.

It thus appears that the analysis of the more general averages might actually be easier, and in fact for deploying some of the methods at our disposal this is true.  The ergodic theoretic proof of convergence of these averages in~\cite{Aus--nonconv} (reproving a result of Tao from~\cite{Tao08(nonconv)}) implicitly needs the linear independence of the group elements corresponding to $T_1$, $T_2$ and $T_3$. In the finitary world, the hypergraph-regularity proofs of the multidimensional Szemer\'edi Theorem must first lift the problem into a group $\bbZ^d$ for $d$ large enough so that one is looking for the corners of a $d$-dimensional simplex (rather than any more complicated $d$-dimensional constellations) before this search can be correctly recast in the language of extremal hypergraph theory.

However, both of these arguments use only the most basic, `rough' structure for the data being studied, and by contrast the more refined density-increment approach \emph{is} simpler in the case of two-dimensional squares than that of three-dimensional corners.  Each of the superficially-simpler characteristic factors $\pi_i'$ for the three-dimensional problem is assembled from ingredients of the form $\zeta_0^{\bf{v}}$ for some $\bf{v} \in \bbZ^3$, and each of these is a factor map onto a factor of $(X,\mu,T_1,T_2,T_3)$ on which the acting group is essentially $\bbZ^3/\bbZ\bf{v}\cong \bbZ^2$ (owing to the partial invariance).  In order to mimic Shkredov's approach to these results, it is then necessary to know how all of these essentially two-dimensional systems are jointly distributed as factors of $(X,\mu,T_1,T_2,T_3)$ (in order to generalize our use of Lemma~\ref{lem:joint-dist-of-part-invts} in the proof of Proposition~\ref{prop:Shk-main-estimate}, for example).  It turns out that to understand this joint distribution one needs the same kind of machinery as for the identification of a tuple of characteristic factors in the first place (the reason why these are essentially equivalent problems is discussed in detail in Chapter 4 of~\cite{Aus--thesis}); and the particular problem of describing the joint distribution of these `two-dimensional' factors turns out to be of a similar level of complexity to the problem of describing characteristic factors for multiple recurrence across squares in a $\bbZ^2$-action.

So the finer information required for the density-increment strategy forces one to understand not only the `top-level' structural result that is contained in the identification of a characteristic tuple of factors, but also how all the ingredients appearing in those characteristic factors are jointly distributed.  This can be of similar difficulty to a lower-dimensional problem of identifying characteristic factors.  For understanding multiple recurrence across translates and dilates of some complicated constellation in $\bbZ^d$, one might need to work with a large partially ordered family of factors of a given system, where the characteristic factors appear as the maximal elements, and several layers of smaller factors (including group rotations, nilsystems, or possibly something else) must also be identified in order to describe all the necessary joint distributions well enough to implement a density increment.  For a density-increment proof such as in Section~\ref{sec:2D} above, this would presumably entail working with a much richer analog of the augmented processes that appear there.

These speculations notwithstanding, serious problems surround the status of finitary analogs of Theorem~\ref{thm:super-Fberg-struct} or its generalizations.  I believe such analogs are expected by many researchers in this field, but formulating a precise conjecture is already tricky, and at this writing I know of no higher-dimensional results beyond Shkredov's.  It is not clear what methods (extending Shkredov's or others) are needed to establish such structural results.  Without them, the prospect of a density-increment proof of the presence of prescribed constellations in dense subsets of $\bbZ^d$ seems rather remote.

\bibliographystyle{alpha}
\bibliography{bibfile}

\vspace{20pt}

\parskip 0pt

\end{document}